\let\ams@starttoc\@starttoc
\let\@starttoc\ams@starttoc
\patchcmd{\@starttoc}{\makeatletter}{\makeatletter\parskip\z@}{}{}
\newtheorem{thm}{Theorem}
\newtheorem{thm*}{Theorem}
\newtheorem{prop}{Proposition}
\newtheorem{lma}[prop]{Lemma}
\newtheorem{cor}[prop]{Corollary}
\theoremstyle{definition}
\newtheorem{df}[prop]{Definition} 
\newtheorem{conj}[prop]{Conjecture}
\theoremstyle{remark}
\newtheorem{rmk}[prop]{Remark} 
\newcommand{\F}{{\mathbb{F}}}
\newcommand{\R}{{\mathbb{R}}}
\newcommand{\Z}{{\mathbb{Z}}}
\newcommand{\C}{{\mathbb{C}}}
\newcommand{\Q}{{\mathbb{Q}}}
\newcommand{\bK}{{\mathbb{K}}}
\newcommand{\bF}{{\mathbb{F}}}
\newcommand{\mf}[1]{{\mathscr{#1}}}
\newcommand{\ra}{\rightarrow}
\newcommand{\del}{\partial}
\newcommand{\sm}[1]{C^\infty(#1)}
\newcommand{\A}{\mathcal{A}}
\newcommand{\cL}{\mathcal{L}}
\newcommand{\intoi}{\int_0^1}
\newcommand{\til}[1]{\widetilde{#1}}
\newcommand{\ol}[1]{\overline{#1}}
\newcommand{\zt}{{\Z/(2)}}
\DeclareMathOperator{\ima}{\mathrm{im}}
\newcommand{\om}{\omega}
\newcommand{\al}{\alpha}
\newcommand{\la}{\lambda}
\newcommand{\ga}{\gamma}
\newcommand{\eps}{\epsilon}
\newcommand{\cA}{\mathcal{A}}
\newcommand{\cB}{\mathcal{B}}
\newcommand{\cF}{\mathcal{F}}
\newcommand{\cH}{\mathcal{H}}
\newcommand{\cJ}{\mathcal{J}}
\newcommand{\cO}{\mathcal{O}}
\newcommand{\cP}{\mathcal{P}}
\newcommand{\fF}{\mf{F}}
\newcommand{\rT}{\mathrm{T}}
\newcommand{\rS}{\mathrm{S}}
\newcommand{\fix}{\mathrm{Fix}}
\newcommand{\pr}{pseudo-rotation}
\def\mrm#1{{\mathrm{#1}}}
\def\cl#1{{\mathcal{#1}}}
\newcommand{\brat}[1]{{\left< #1 \right>}}
\DeclareMathOperator{\Ham}{\mathrm{Ham}}
\DeclareMathOperator{\im}{\mathrm{im}}
\DeclareMathOperator{\id}{\mathrm{id}}
\DeclareMathOperator{\spec}{\mathrm{Spec}}
\DeclareMathOperator{\Spec}{\mathrm{Spec}}
\DeclareMathOperator{\loc}{\mathrm{loc}}
\DeclareMathOperator{\diam}{\mathrm{diam}}
\def\H2{H^{(2)}}
\newcommand{\esemail}{shelukhin@dms.umontreal.ca}
\newcommand{\msaemail}{marcelo.sarkis.atallah@umontreal.ca}
\begin{document}

\title{Hamiltonian no-torsion}

\author{Marcelo S. Atallah}
\address{Marcelo S. Atallah, Department of Mathematics and Statistics,
	University of Montreal, C.P. 6128 Succ.  Centre-Ville Montreal, QC
	H3C 3J7, Canada}
\email{\msaemail}

	\author{Egor Shelukhin}
\address{Egor Shelukhin, Department of Mathematics and Statistics,
	University of Montreal, C.P. 6128 Succ.  Centre-Ville Montreal, QC
	H3C 3J7, Canada}
\email{\esemail}

\begin{abstract}
In 2002 Polterovich has notably established that on closed aspherical symplectic manifolds, Hamiltonian diffeomorphisms of finite order, which we call Hamiltonian torsion, must in fact be trivial. In this paper we prove the first higher-dimensional Hamiltonian no-torsion theorems beyond the symplectically aspherical case. We start by showing that closed symplectic Calabi-Yau and negative monotone symplectic manifolds do not admit Hamiltonian torsion. Going still beyond topological constraints, we prove that every closed positive monotone symplectic manifold $(M,\om)$ admitting Hamiltonian torsion is geometrically uniruled by holomorphic spheres for every $\om$-compatible almost complex structure, partially answering a question of McDuff-Salamon. This provides many additional no-torsion results, and as a corollary yields the geometric uniruledness of monotone Hamiltonian $S^1$-manifolds, a fact closely related to a celebrated result of McDuff from 2009. Moreover, the non-existence of Hamiltonian torsion implies the triviality of Hamiltonian actions of lattices like $SL(k,\Z)$ for $k \geq 2,$ as well as those of compact Lie groups. Finally, for monotone symplectic manifolds admitting Hamiltonian torsion, we prove an analogue of Newman's theorem on finite transformation groups for several natural norms on the Hamiltonian group: such subgroups cannot be contained in arbitrarily small neighborhoods of the identity. Our arguments rely on generalized Morse-Bott methods, as well as on quantum Steenrod powers and Smith theory in filtered Floer homology.
\end{abstract}

\maketitle

\tableofcontents

\section{Introduction and main results}

\subsection{Introduction}

The question of the existence of finite group actions on manifolds has been of interest in topology for a long time. In particular, it is in order to study this question that P. A. Smith \cite{Smith-original} has developed in the $1930$s what is now called Smith theory for cohomology with $\F_p$ coefficients in the context of continuous actions of finite $p$-groups. We refer the reader to  \cite{Floyd-original,Bredon-Transformation,Borel-Transformation,Hsiang-Transformation} for further references on Smith theory. 

Quite a lot of progress regarding this question has been obtained in low-dimensional topology \cite{MorganBass-SmithConj, ChenKwasik-K3} and in smooth topology in arbitrary dimension - see for example \cite{Mundet-JTop} and references therein. As a first easy example, we remark that it is not hard to classify finite group actions on closed surfaces. 

In symplectic topology, it was shown by Polterovich \cite{Pol-notors} that non-trivial {\em Hamiltonian} finite group actions, that we refer to as {\em Hamiltonian torsion}, on symplectically aspherical manifolds do not exist. Furthermore, Chen and Kwasik \cite{ChenKwasik-K3} rule out symplectic finite group actions acting trivially on homology on certain symplectic Calabi-Yau $4$-manifolds (see \cite{WuLiu} for a further development). Moreover, general constraints were obtained by Mundet i Riera \cite{Mundet-Ham}, showing roughly speaking that finite groups acting effectively in a  Hamiltonian way must be approximately abelian. 

Furthermore, Hamiltonian actions of cyclic groups on rational ruled symplectic $4$-manifolds, that is symplectic $S^2$-bundles over $S^2,$ were recently shown to be induced by $S^1$-actions \cite{ChiangKessler-ruled} (see \cite{Chen-Duke,Chen-Gokova, ChiangKessler-noext} for related and for contrasting results). Compatibly, the strongest restriction to date on manifolds admitting non-trivial Hamiltonian $S^1$-actions was obtained by McDuff \cite{McDuff-uniruled} who showed that all such manifolds must be {\em uniruled}, in the sense that at least one genus zero $k$-point Gromov-Witten invariant, for $k\geq 3,$ involving the point class must not vanish. Of course, rational ruled symplectic $4$-manifolds satisfy this condition, in fact with $k=3$: they are {\em strongly uniruled.} Either condition implies that these manifolds are {\em geometrically uniruled:} for each $\om$-compatible almost complex structure $J$ and each point $p\in M,$ there is a $J$-holomorphic sphere\footnote{This is a smooth map $u:\C P^1 \to M$ satisfying $Du\circ j = J \circ Du$ for the standard complex structure $j$ on $\C P^1.$ Such spheres and their significance in symplectic topology were discovered by Gromov \cite{GromovPseudohol}. We refer to McDuff-Salamon \cite{McDuffSalamon-BIG} for a detailed modern description of this notion.} passing through $p.$ Finally, in \cite{S-PRQS} a new notion of uniruledness, $\F_p$-Steenrod uniruledness, was introduced for $p=2,$ and was generalized to odd primes $p>2$ in \cite{SSW}: the quantum Steenrod $p$-th power of the cohomology class Poincar\'{e} dual to the point is defined and deformed in the sense of not coinciding with the classical Steenrod $p$-th power. This notion similarly implies geometric uniruledness. It is currently not known whether or not it implies uniruledness in the sense of McDuff, but it is expected that it does (see \cite{Seidel-formal,SeidelWilkins} for first steps in this direction).

In this paper we prove the first higher-dimensional Hamiltonian no-torsion results since that of Polterovich, that hold beyond the symplectically aspherical case. Firstly, we prove that, in addition to symplectically aspherical manifolds, symplectically Calabi-Yau manifolds and negative monotone symplectic manifolds do not admit Hamiltonian torsion. An elementary argument then shows, in summary, that if a closed symplectic manifold $M$ admits Hamiltonian torsion, then it admits a spherical homology class $A$ such that $\left<c_1(TM),A\right> > 0,$ $\left< [\om], A\right> > 0$ (see Corollary \ref{cor: spherical class}). Our results have a similar flavor to the result of McDuff for $S^1$-actions: indeed, negative monotone and Calabi-Yau manifolds are not geometrically uniruled, and neither are the symplectically aspherical ones. 

Going far beyond topological restrictions, we further study restrictions on Hamiltonian torsion in the positive monotone case. Using recently discovered techniques, we show that in this case the existence of non-trivial Hamiltonian torsion implies $\bF_p$ Steenrod-uniruledness for certain primes $p,$ and hence geometric uniruledness. This again fits well with the result of McDuff and in fact provides a partial solution to Problem 24 from the monograph \cite{McDuffSalamonIntro3} of McDuff-Salamon. Studying the properties of the quantum Steenrod operations and their relation to Gromov-Witten invariants further (see \cite{Wilkins,Wilkins-PSS, SeidelWilkins} for first inroads in this direction) might show that our solution is in fact quite complete. Furthermore, we are tempted to conjecture the following analogue of the result of McDuff.

\begin{conj}\label{conj: zp uniruled}
Each closed symplectic manifold with non-trivial Hamiltonian torsion must be uniruled.
\end{conj}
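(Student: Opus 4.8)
The plan is to reduce the conjecture to the positive monotone case and then to promote the $\bF_p$-Steenrod uniruledness obtained here to uniruledness in McDuff's sense, i.e. to the non-vanishing of a genus-zero Gromov--Witten invariant with a point constraint. The no-torsion theorems of this paper for symplectically aspherical, Calabi--Yau and negative monotone manifolds make the conjecture vacuous in those cases; what remains is the positive monotone case together with the class of closed symplectic manifolds that fit into none of the above. For this last, fully general case I expect one must either extend the filtered Floer Smith-inequality machinery underlying our no-torsion results to arbitrary closed symplectic manifolds — which requires Novikov coefficients and virtual perturbations, with a correspondingly more delicate index theory — or run a uniruledness argument directly, in the spirit of McDuff's proof for Hamiltonian $S^1$-manifolds \cite{McDuff-uniruled}. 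Either way, Corollary \ref{cor: spherical class}, which produces a spherical class $A$ with $\langle c_1(TM),A\rangle>0$ and $\langle [\om], A\rangle>0$, is the natural input, since these are exactly the numerics of a candidate uniruling sphere.

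In the positive monotone case the central step is to convert the \emph{equivariant} curve count detecting the deformation of the quantum Steenrod power into an \emph{ordinary} Gromov--Witten count. The quantum Steenrod $p$-th power records, at leading order, $J$-holomorphic spheres carrying a point insertion together with $\Z/p$-equivariant constraints on a cyclic configuration of marked points; one wants to show that such a count cannot be non-trivial unless some genus-zero $k$-point Gromov--Witten invariant with $k\ge 3$ through the point class is non-zero. Following the circle of ideas of Wilkins and Seidel--Wilkins \cite{Wilkins,Wilkins-PSS,SeidelWilkins,Seidel-formal} relating quantum Steenrod operations to the quantum product and to enumerative invariants, the route is to degenerate the equivariant moduli space, identify its top-dimensional boundary strata with gluings of (equivariant) Gromov--Witten classes, and then descend to the non-equivariant theory. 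Since we are in the monotone regime, transversality is classical, and the main analytic labor is the compactification and the sign and orientation bookkeeping for the equivariant moduli spaces.

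The hard part will be controlling cancellation in passing from characteristic $p$ to characteristic $0$. Steenrod uniruledness is an $\bF_p$ statement, while McDuff-type uniruledness is a statement over $\Q$; it is conceivable a priori that every relevant rational curve count vanishes while the mod-$p$ count does not, and excluding this appears to require extracting an integral, or at least characteristic-zero, enumerative consequence from the quantum Steenrod power — precisely the content of the open problems cited after the conjecture. A related difficulty is uniformity in $p$: our argument yields Steenrod uniruledness only for certain primes determined by the order of the torsion element, so a clean proof would want either a prime-independent mechanism or a way to amalgamate the mod-$p$ data across $p$. I would attack the positive monotone case first, using the Seidel-representation-type structure carried by a Hamiltonian torsion element to pin down the homology class of the expected uniruling sphere, and only then turn to the virtual extension needed for the general closed case — which I expect to be the true obstacle, and the reason the statement is posed as a conjecture rather than a theorem.
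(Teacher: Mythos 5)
The statement you are trying to prove is stated in the paper as a \emph{conjecture}: the authors offer no proof of it, and explicitly say that whether $\F_p$-Steenrod uniruledness implies uniruledness in McDuff's Gromov--Witten sense ``is currently not known.'' What the paper actually proves is strictly weaker: no torsion at all in the symplectically aspherical, Calabi--Yau and negative monotone cases (Theorems \ref{thm: Polterovich} and \ref{thm: neg mon tors}), and, in the positive monotone case, that torsion forces the quantum Steenrod power of the point class to be deformed (Theorem \ref{thm: St uniruled}), hence geometric uniruledness --- not the non-vanishing of a genus-zero Gromov--Witten invariant with a point constraint. So your write-up should not be read as a proof attempt to be checked against a proof in the paper; there is none.

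Judged on its own terms, your proposal is a reasonable research program but it contains no argument at the decisive points, and you say so yourself. The step ``degenerate the equivariant moduli space and descend to the non-equivariant theory'' is exactly the open problem flagged in the paper (and in \cite{Seidel-formal,SeidelWilkins}): a quantum Steenrod correction is a $\Z/p$-equivariant, mod-$p$ count, and there is at present no mechanism excluding the scenario in which every rational Gromov--Witten number through the point vanishes while the mod-$p$ equivariant count does not --- your ``controlling cancellation'' paragraph names the difficulty but does not overcome it. Likewise, outside the monotone and symplectically non-positive classes the paper's machinery (Smith inequalities in filtered Floer theory, the barcode and index arguments) is not available, and Corollary \ref{cor: spherical class} only yields a spherical class $A$ with $\langle[\om],A\rangle>0$ and $\langle c_1(TM),A\rangle>0$, which is very far from producing holomorphic spheres through every point, let alone a non-zero Gromov--Witten invariant. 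In short: your reduction of the problem to (i) an enumerative upgrade of Steenrod uniruledness and (ii) a virtual/semi-positive extension of the Floer--Smith theory is the right way to organize the question, but both (i) and (ii) are genuinely open, so the conjecture remains unproved by your proposal just as it is unproved in the paper.
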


For the special case of symplectically Calabi-Yau $4$-manifolds, our results compare to those of Chen-Kwasik \cite{ChenKwasik-K3} as follows: the paper \cite{ChenKwasik-K3} rules out more general symplectic finite group actions than Hamiltonian ones, but in a more restrictive context, as they require these $4$-manifolds to have non-zero signature, and $b_2^+$ at least $2,$ while we make no such assumptions. Furthermore, the results of Wu-Liu \cite{WuLiu}, which hold only in dimension $4,$ cover a different class of manifolds than our results.


Before addressing further results on the metric properties of Hamiltonian torsion, when it exists in the positive monotone case, we comment on our methods of proof. Curiously enough, our arguments involve a recently discovered analogue of Smith theory in filtered Hamiltonian Floer homology \cite{Seidel-pants,SZhao-pants,S-HZ}, and related notions of quantum Steenrod powers \cite{Wilkins, Wilkins-PSS, SSW}. Previously these methods were applied to questions of existence of infinitely many periodic points \cite{S-HZ, SSW} and, more restrictively, of obstructions on manifolds to admit Hamiltonian pseudo-rotations \cite{S-PRQS, CGG2, S-PRQSR}. In fact, a general theme of this paper is that a Hamiltonian diffeomorphism of finite order behaves in many senses like a counterexample to the Conley conjecture. For example, the statement of  Corollary \ref{cor: spherical class} is analogous to that of \cite[Theorem 1.1]{GG-revisited} that provides the most general setting wherein the Conley conjecture is known to hold.

Additional results in this paper include the following. First, in the special case when $(M,\om)$ has minimal Chern number $N = n+1,$ we deduce from the work of Seidel and Wilkins \cite{SeidelWilkins}, as in \cite{S-PRQS}, that non-trivial Hamiltonian torsion implies that the quantum product $[pt] \ast [pt]$ does not vanish. This means that the manifold is strongly rationally connected: it implies strong uniruledness, and also shows that for each pair of distinct points $p_1, p_2$ in $M,$ and each $\om$-compatible almost complex structure $J,$ there exists a $J$-holomorphic sphere in $M$ passing through $p_1, p_2.$ Second, we prove that the spectral norm \cite{Schwarz:action-spectrum, Oh-specnorm,Viterbo-specGF} of a Hamiltonian torsion element $\phi$ of order $k$ on a closed rational symplectic manifold, that is $\left< [\om], \pi_2(M) \right> = \rho \cdot \Z,$ $\rho > 0,$ satisfies $\gamma(\phi) \geq \rho/k,$ and as an immediate consequence, the same estimate applies for the Hofer norm \cite{HoferMetric,Lalonde-McDuff-Energy}.

More importantly, in our final main result, we prove that in the monotone case, given $\phi \in \Ham(M,\om)\setminus\{\id\}$ of order $k,$ that is $\phi^k = \id,$ there exists $m \in \Z/k\Z$ such that \begin{equation}\label{eq: spectral Newman}\gamma(\phi^m) \geq \rho/3.\end{equation} 
This last result should be considered a Hamiltonian analogue of the celebrated result of Newman \cite{Newman} (see also \cite{Dress,SmithIII}), the $C^0$-distance having been replaced by the spectral distance. Moreover we prove the stronger statement that if $k$ is prime, then $\gamma(\phi^m) \geq \rho \lfloor k/2 \rfloor/ k$ for a certain $m \in \Z/k\Z,$ and provide a similar statement in the context of Hamiltonian pseudo-rotations.

The bound \eqref{eq: spectral Newman} can further be seen to imply Newman's result in a special case as follows. By \cite[Theorem C]{S-Zoll} (see also \cite{Kawamoto-C0}), when $M = \C P^n$ is the complex projective space with the standard symplectic form normalized so that $\C P^1$ has area $1,$ there is a constant $c_n$ depending only on the dimension, such that for all $\phi \in \Ham(M,\om),$ the usual $C^0$-distance of $\phi$ to the identity satisfies \[d_{C^0}(\phi,\id) \geq c_n \gamma(\phi).\] Hence if $\phi$ is of finite order, then by \eqref{eq: spectral Newman} there exists $m \in \Z$ such that \[d_{C^0}(\phi^m,\id) \geq  c_n/3.\]
It would be very interesting to see if the results of this paper can be generalized to the case of Hamiltonian homeomorphisms, as defined in \cite{BHS-spectrum}. This generalization does not seem to be straightforward because we use the properties of the linearization of the Hamiltonian diffeomorphism at its fixed points, as well as Smith theory in filtered Floer homology, which is not in general stable in the $C^0$-topology.

We close the introduction by noting that we expect that our results in the monotone case should generalize to the semi-positive case, once the relevant results of \cite{S-HZ},\cite{SSW} have been generalized to the requisite setting. Since these generalizations would not considerably differ, in a conceptual way, from the arguments presented in this paper, but would necessitate more lengthy technical proofs, we defer their investigation to further publications.



\subsection{Main results}

We start with the following theorem of Polterovich \cite{Pol-notors}, originally stated in the case where $\pi_2 = 0.$ For the reader's convenience we include its proof in Section \ref{subsec: theorem of Polterovich}. 

\begin{thm}[Polterovich]\label{thm: Polterovich}
Let $(M,\om)$ be a closed symplectically aspherical symplectic manifold. Then each homomorphism $G \to \Ham(M,\om),$ where $G$ is a finite group, is trivial.
\end{thm}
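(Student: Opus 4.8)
The plan is to reduce the statement to the claim that the group $\Ham(M,\om)$ itself is torsion free, and then to rule out a nontrivial finite-order element by combining the behaviour of the action functional under iteration with the fact that, on a symplectically aspherical manifold, the spectral invariants descend to honest functions on $\Ham(M,\om)$. If $G\to\Ham(M,\om)$ is a homomorphism from a finite group, every element $\phi$ of its image satisfies $\phi^k=\id$ for some $k\ge 1$, so it suffices to prove that such a $\phi$ equals $\id$. Since $\langle[\om],\pi_2(M)\rangle=0$ and $\langle c_1,\pi_2(M)\rangle=0$, the action functional $\cA_H$ of a (normalized, $1$-periodic in time) Hamiltonian $H$ is single-valued on contractible loops, and the spectral invariants $c_+(\psi)=c(1,\psi)$, $c_-(\psi)=c([M],\psi)$ of Schwarz — hence the spectral norm $\gamma=c_+-c_-$ — are well defined on $\Ham(M,\om)$, with $\gamma$ nondegenerate: $\gamma(\psi)=0$ iff $\psi=\id$. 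So it is enough to show $\gamma(\phi)=0$.

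For the heart of the argument, fix a Hamiltonian $H$ generating $\phi$ and let $\widetilde H(t,x)=k\,H(kt\bmod 1,x)$ be its $k$-fold iterate, which generates $\phi^{k}=\id$. By spectrality, $c_\pm(\phi)=\cA_H(z_\pm)$ for suitable contractible $1$-periodic orbits $z_\pm$ of $H$. Their iterates $z_\pm^{(k)}(t)=z_\pm(kt\bmod 1)$ are $1$-periodic orbits of $\widetilde H$; they remain contractible because $[z_\pm^{(k)}]=k[z_\pm]=0$ in $\pi_1(M)$, and the standard iteration identity for the action — valid here precisely because the $\om$-term is capping independent — gives $\cA_{\widetilde H}(z_\pm^{(k)})=k\,\cA_H(z_\pm)=k\,c_\pm(\phi)$. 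Now use that $\widetilde H$ generates $\id$: its $1$-periodic orbits are exactly the loops $t\mapsto \psi^t(y)$, $y\in M$, where $\{\psi^t\}$ is the Hamiltonian flow of $\widetilde H$, and they cover $M$. Since one of them, $z_+^{(k)}$, is contractible and the homotopy class of the orbit through $y$ is locally constant in $y$, all of them are contractible; and then the action of the orbit through $y$ is also locally constant in $y$, hence constant on the connected manifold $M$. Therefore $k\,c_+(\phi)=\cA_{\widetilde H}(z_+^{(k)})=\cA_{\widetilde H}(z_-^{(k)})=k\,c_-(\phi)$, so $c_+(\phi)=c_-(\phi)$, i.e.\ $\gamma(\phi)=0$ and $\phi=\id$.

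The step everything rests on is that, on a symplectically aspherical manifold, the action spectrum — and with it the pair $c_\pm$ — is an invariant of the underlying diffeomorphism, together with the fact that the action of a Hamiltonian loop is constant along that loop's orbits; this is exactly where asphericity is used, and it is why the conclusion fails on $S^2$, where rotations provide Hamiltonian torsion and the action spectrum wraps around with period equal to the area. A secondary point requiring care is that a nontrivial finite-order $\phi$ is genuinely degenerate when $\fix(\phi)$ has positive dimension, so one cannot pass to a nondegenerate model and must invoke spectrality of $c_\pm$ (and closedness of the spectrum) in the general, possibly Morse--Bott, situation; equivalently, one may compute the filtered Floer homology of $\phi$ in Morse--Bott fashion from $\fix(\phi)$, which is also the route linking this case to the Smith-theoretic machinery of the later sections: there $\gamma(\phi)=0$ follows from the Smith inequality $\dim HF^{(-\delta,\delta)}(\phi;\F_p)\ge\dim HF(\id;\F_p)=\dim H_*(M;\F_p)$ for $\phi^{p}=\id$ with $p\mid k$ prime, forcing the whole barcode of $\phi$ to collapse to action level $0$.
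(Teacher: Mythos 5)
Your argument is correct and is essentially the paper's proof: the paper likewise notes that $H^{(d)}$ generates a Hamiltonian loop, so by asphericity $\Spec(H^{(d)})$ is a single point, hence $\Spec(H)\subset\Spec(H^{(d)})/d$ is a single point containing both spectral invariants, giving $\gamma(H)=0$ and then $\phi=\id$ by non-degeneracy of the spectral norm. The only cosmetic issue is your labelling of $c_\pm$ (the pair should be the invariants of the fundamental and point classes, equivalently $c([M],H)+c([M],\ol{H})$); the substance of the argument is unaffected.
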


In this paper we prove a number of additional ``no-torsion" theorems of this kind, going beyond the symplectically aspherical case, and study the metric properties of Hamiltonian diffeomorphisms of finite order when such obstructions do not hold. Our conditions on the manifold that imply the absence of Hamiltonian torsion are of two kinds: the first is purely topological, and the second, perhaps more surprisingly, is in terms of pseudo-holomorphic curves.

\subsubsection{Topological conditions}

The first set of results of this paper is as follows.


\begin{thm}\label{thm: neg mon tors}
Let $(M,\om)$ be a closed negative monotone or closed symplectically Calabi-Yau symplectic manifold. Then each homomorphism $G \to \Ham(M,\om),$ where $G$ is a finite group, is trivial.
\end{thm}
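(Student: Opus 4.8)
The plan is to assume for contradiction that some finite group maps nontrivially to $\Ham(M,\om)$ and to extract from this a \emph{spherical} homology class $A$ with $\langle c_1(TM),A\rangle>0$ and $\langle[\om],A\rangle>0$; this is impossible in the Calabi--Yau case (where $c_1|_{\pi_2(M)}=0$) and in the negative monotone case (where $c_1|_{\pi_2(M)}=N[\om]$ with $N<0$ forces $\langle c_1,A\rangle$ and $\langle[\om],A\rangle$ to have opposite signs). Picking an element of prime order in the image, we obtain $\phi\in\Ham(M,\om)$, $\phi\neq\id$, with $\phi^p=\id$ for a prime $p$; passing to a connected component we may assume $M$ connected, and this component is again Calabi--Yau resp.\ negative monotone. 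Averaging a $\phi$-invariant metric and an $\om$-compatible almost complex structure over the generated $\Z/p$-action and linearizing at fixed points, $F:=\fix(\phi)$ is a disjoint union of closed symplectic submanifolds $F_i\subset M$, with $F\neq\emptyset$ by the Arnold conjecture (known in these settings) and $F\neq M$ since $\phi\neq\id$. On each normal bundle $\nu_i\to F_i$ the $\Z/p$-action splits into oriented $2$-plane subbundles on which $\phi$ acts by rotation through $2\pi a_{i,j}/p$ with $a_{i,j}\in(\Z/p)\setminus\{0\}$.

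Next I would set up generalized Morse--Bott Hamiltonian Floer homology for $\phi$ over the $\F_p$-Novikov field $\Lambda$: a perturbation supported near $F$ makes $\phi$ nondegenerate, with fixed points organized by a Morse function on each $F_i$, and the associated Morse--Bott spectral sequence has first page $E_1\cong\bigoplus_i H_*(F_i)\otimes\Lambda$, the $i$-th summand being shifted in Floer degree by the normal Conley--Zehnder index $\hat\mu_i$ and carrying an action/filtration shift, both determined by the rotation data $\{a_{i,j}\}$. Since $\phi$ is Hamiltonian, this complex computes $QH_*(M)\cong H_*(M)\otimes\Lambda$. I then invoke the classical Smith inequality $\sum_i\dim_{\F_p}H_*(F_i;\F_p)\leq\dim_{\F_p}H_*(M;\F_p)$ for the $\Z/p$-action: as $E_\infty$ is a subquotient of $E_1$ and equals the associated graded of $QH_*(M)$, whose $\Lambda$-rank is $\dim H_*(M)$, the two inequalities are forced to be equalities, the spectral sequence degenerates at $E_1$, and $QH_*(M)\cong\bigoplus_i H_*(F_i)[\hat\mu_i]$ as graded $\Lambda$-modules.

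The decisive point is the computation of $\hat\mu_i$. Each normal rotation through an angle in $(0,2\pi)$ contributes $+1$ to the Conley--Zehnder index along its short positive generating path, and the finite order of $\phi$ pins down the relevant capping up to the spherical ambiguity, which is immaterial in the Calabi--Yau case ($\langle c_1,\pi_2\rangle=0$) and is handled by the action filtration in the negative monotone case. Normalizing so that for $\phi=\id$ one recovers $H_*(M)\otimes\Lambda$ in degrees $[-n,n]$, this makes the $i$-th summand occupy degrees contained in $[-n+2c_i,\,n]$, where $c_i=\mathrm{codim}_{\C}F_i\geq 1$ whenever $\phi\neq\id$; hence even $\bigoplus_i H_*(F_i)[\hat\mu_i]$ vanishes in Floer degree $-n$. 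But $QH_{-n}(M)\cong H_0(M)\otimes\Lambda\neq 0$, so some $c_j=0$, i.e.\ $F_j=M$, i.e.\ $\phi=\id$ --- a contradiction. In the negative monotone case the grading is only $\Z/2N$, so the same bookkeeping must be run with the Novikov/action filtration in the separating role; and, in either case, if one does not assume degeneracy a priori, the obstruction to the naive collapse of $E_1$ onto $QH_*(M)$ is exactly a nonzero count of holomorphic spheres in a spherical class $A$, with the degree and action bookkeeping forcing $\langle c_1,A\rangle>0$ and $\langle[\om],A\rangle>0$ --- this is Corollary~\ref{cor: spherical class}, which then contradicts the hypotheses directly.

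The main obstacle is twofold. First, one must rigorously develop generalized Morse--Bott Floer theory for an arbitrary finite-order --- hence maximally degenerate --- Hamiltonian diffeomorphism: transversality for the perturbed equations near the fixed loci, coherent orientations, and the construction and convergence of the Morse--Bott spectral sequence. Second, and more delicately, one must carry out the normal Conley--Zehnder index and action computation with the correct signs; in particular, verifying that the finite order of $\phi$ canonically resolves the $\pi_1(\mathrm{Sp})$/capping ambiguity so that every normal rotation contributes $+1$ (and not $-1$) is the point on which the whole argument turns, and it is precisely here that the Calabi--Yau versus negative monotone dichotomy enters, the latter case needing the action filtration to do what the honest $\Z$-grading does in the former.
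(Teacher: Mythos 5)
Your overall architecture (reduce to prime order, average a metric and an $\om$-compatible $J$, conclude that $\fix(\phi)$ is a finite union of closed symplectic submanifolds, then run a Floer-theoretic argument against the CY/negative monotone hypotheses) starts out parallel to the paper's Theorem \ref{thm: torsion is perfect}, but the step on which your whole contradiction turns is a genuine gap, and I do not believe it can be repaired in the form you state it. You claim that the grading shift $\hat\mu_i$ of the Morse--Bott summand $H_*(F_i)$ is determined by the normal rotation data $\{a_{i,j}\}$, with ``each normal rotation through an angle in $(0,2\pi)$'' contributing $+1$, so that every summand lands in degrees $\geq -n+2c_i$ and degree $-n$ is missed. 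But the Floer-theoretic index of a fixed component of a finite-order Hamiltonian diffeomorphism is \emph{not} intrinsic to $D\phi|_{\nu_i}$: it depends on the choice of generating Hamiltonian path (the lift $\til\phi\in\til\Ham(M,\om)$) and on the capping, and finite order does not pin these down --- the paper itself points out that the rotation by $2\pi/3$ on $S^2$ is generated both by a path whose associated loop $\{\phi^3_{t\cdot 2\pi/3}\}$ is non-contractible and by one whose loop is contractible. Worse, the sign claim fails even locally: for the standard rotation of $S^2$ generated by a multiple of the height function, one pole has negative mean index, so ``all normal rotations contribute positively'' would require choosing a ``short positive path'' at every component \emph{simultaneously}, and there is no reason a single global Hamiltonian isotopy realizing all these local choices exists. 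You flag this as ``the point on which the whole argument turns,'' but flagging it is not proving it, and the CY/negative-monotone hypotheses do not by themselves resolve the $\pi_1(Sp)$/lift ambiguity. Your fallback --- that the obstruction to collapse of the spectral sequence is ``exactly'' a sphere class with $\langle c_1,A\rangle>0$, $\langle[\om],A\rangle>0$, i.e.\ Corollary \ref{cor: spherical class} --- is circular, since in the paper that corollary is a consequence of the theorem you are proving, not an input. Two further technical problems: in the negative monotone case the Floer machinery requires virtual cycles and characteristic-zero coefficients (Remark \ref{rmk: virtual_cycle}), which clashes with your $\F_p$-Novikov setup and the Smith-inequality degeneration argument; and ``the same bookkeeping with the action filtration'' in the $\Z/2N$-graded negative monotone case is asserted, not carried out.

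For contrast, the paper's proof of Theorem \ref{thm: generalized perfect CY negmon} deliberately avoids computing any local index of $\phi$ itself. It exploits large iterates $\phi^{k}$ (which have the same generalized fixed set): by pigeonhole, the carriers of $c([M],H^{(r_i)})$ live over one fixed component $\cl F$, weak non-degeneracy and the support property of generalized local Floer homology force the carrier's mean index into $(0,2n)$ as in \eqref{eq: Delta pos}, and then the homogeneity $\Delta(H^{(k)},\ol{x}^{(k)})=k\Delta(H,\ol{x})$ together with subadditivity of spectral invariants produces, in the negative monotone case, a recapping class $A_i$ with $\om(A_i)\geq 0$ and $c_1(A_i)>0$ --- a contradiction; in the Calabi--Yau case the capping-independence of the mean index plus its linear growth kills $HF^{\loc}_n$ of every component for large iterates, contradicting the PSS isomorphism $HF_n\cong QH_{2n}(M)\ni[M]$. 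No Smith theory and no normal-rotation index computation is needed for Theorem \ref{thm: neg mon tors}; Smith-type arguments enter only in the monotone results. If you want to salvage your route, you would have to prove an intrinsic index/normalization statement replacing your ``$+1$ per rotation'' claim, and the iteration-asymptotic argument above is precisely the device the authors use to sidestep that.
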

%

A simple exercise in linear algebra shows that the class of manifolds, which we call {\em symplectically non-positive,} covered by Theorems \ref{thm: Polterovich} and \ref{thm: neg mon tors} can be described concisely as those closed symplectic manifolds $(M,\om)$ for which \[\left<[\om],A \right> \cdot \left<c_1(TM),A \right> \leq 0\] for all $A \in \pi_2(M).$ In other words, the following holds.

\begin{cor}\label{cor: spherical class}
	If a closed symplectic manifold $(M,\om)$ admits a non-trivial homomorphism $G \to \Ham(M,\om)$ from a finite group, then there exists $A \in \pi_2(M),$ such that $\langle [\om], A \rangle > 0,$ $\langle c_1(TM), A \rangle > 0.$ 
\end{cor}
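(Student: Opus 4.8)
The plan is to deduce the statement directly from Theorems \ref{thm: Polterovich} and \ref{thm: neg mon tors} together with the elementary classification mentioned just before the statement: a closed symplectic manifold $(M,\om)$ is symplectically non-positive (that is, $\langle [\om], A\rangle \cdot \langle c_1(TM), A\rangle \le 0$ for all $A \in \pi_2(M)$) if and only if it is symplectically aspherical, symplectically Calabi--Yau, or negative monotone. Granting this, suppose $(M,\om)$ admits a non-trivial homomorphism from a finite group to $\Ham(M,\om)$. By Theorems \ref{thm: Polterovich} and \ref{thm: neg mon tors} it is none of the three manifolds above, hence not symplectically non-positive, so there is $A \in \pi_2(M)$ with $\langle [\om], A\rangle \cdot \langle c_1(TM), A\rangle > 0$. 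The two factors then have the same sign, and after replacing $A$ by $-A$ if necessary we may assume both are positive, which is exactly the asserted conclusion.

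It remains to verify the classification, which is where the (small amount of) work lies. Write $f := \langle [\om], -\rangle$ and $g := \langle c_1(TM), -\rangle$ for the induced group homomorphisms $\pi_2(M) \to \R$, so that symplectic non-positivity reads $f(A)\,g(A) \le 0$ for all $A$. Each of the three listed classes visibly satisfies this, so only the reverse implication needs an argument. If $f \equiv 0$ then $(M,\om)$ is symplectically aspherical, and if $g \equiv 0$ it is symplectically Calabi--Yau, so assume $f$ and $g$ are both nonzero. If $g = \lambda f$ for some scalar $\lambda$, then $\lambda \ne 0$; moreover $\lambda > 0$ is impossible, since it would give $f(A)\,g(A) = \lambda\,f(A)^2 > 0$ for any $A$ with $f(A) \ne 0$; hence $\lambda < 0$, and as $f \not\equiv 0$ the manifold is negative monotone.

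The only remaining possibility to exclude is that $g$ is \emph{not} a scalar multiple of $f$ while $(M,\om)$ is symplectically non-positive. In this case the homomorphism $(f,g)\colon \pi_2(M) \to \R^2$ has image not contained in any line through the origin, so there exist $A_1, A_2 \in \pi_2(M)$ for which $(f(A_1), g(A_1))$ and $(f(A_2), g(A_2))$ form a basis of $\R^2$. The associated linear isomorphism $L \colon \R^2 \to \R^2$ sending $(s,t)$ to $(s f(A_1) + t f(A_2),\, s g(A_1) + t g(A_2))$ then pulls back the open first quadrant to a nonempty open convex cone $C$ with apex at the origin. As $C$ is open it contains some ball $B(p,\eps)$ with $\eps > 0$, and as $C$ is invariant under positive scaling it contains $B(Np, N\eps)$ for all $N > 0$; choosing $N$ with $N\eps > 1$ produces a lattice point $(m,n) \in C$. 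Then $A := m A_1 + n A_2$ satisfies $f(A) > 0$ and $g(A) > 0$, contradicting symplectic non-positivity and finishing the proof.

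The main thing to get right, rather than an obstacle per se, is to match the three named classes to the conditions $f \equiv 0$, $g \equiv 0$, and $g = \lambda f$ with $\lambda < 0$ --- i.e.\ to use the precise conventions adopted here for ``symplectically aspherical'', ``symplectically Calabi--Yau'' and ``negative monotone'' --- together with the one elementary geometric input, that a nonempty open cone through the origin in $\R^2$ contains a point of $\Z^2$. Everything else is routine.
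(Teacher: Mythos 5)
Your proposal is correct and follows the same route as the paper: the corollary is deduced by combining Theorems \ref{thm: Polterovich} and \ref{thm: neg mon tors} with the linear-algebra fact that symplectic non-positivity, $\langle[\om],A\rangle\cdot\langle c_1(TM),A\rangle\le 0$ for all $A\in\pi_2(M)$, holds exactly for the symplectically aspherical, Calabi--Yau, and negative monotone manifolds. The only difference is that the paper outsources this elementary classification to the cited proof of Ginzburg--G\"urel, whereas you write it out (correctly, including the open-cone/lattice-point step); no gap.
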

For details of this implication see \cite[Proof of Theorem 4.1]{GG-revisited}.


Both these results are given by the following two steps that essentially generalize the notion of a perfect Hamiltonian diffeomorphism, namely one that has a finite number of contractible periodic points of all periods, to the case of compact path-connected isolated sets of fixed points. We call such an isolated set of fixed points of $\phi \in \Ham(M,\om)$ a {\em generalized fixed point} of $\phi.$ Recall that a fixed point $x$ of a Hamiltonian diffeomorphism $\phi = \phi^1_H$ is called contractible whenever the homotopy class $\al(x,\phi)$ of the path $\al(x,H)=\{\phi^{t}_H(x)\}$ for a Hamiltonian $H$ generating $\phi$, is trivial. This class does not depend on the choice of Hamiltonian by a classical argument in Floer theory. We call a generalized fixed point $\cl{F}$ of $\phi$ contractible, if all fixed points $x \in \cl{F}$ are contractible. 

We call $\phi \in \Ham(M,\om)$ {\em generalized perfect} if there exists a sequence $k_j \to \infty$ of iterations, such that for all $j \in \Z_{>0}$ the diffeomorphism $\phi^{k_j}$ has a finite number of contractible generalized fixed points, this set of contractible generalized fixed points does not depend on $j,$ and moreover the following condition regarding indices holds: for each capping $\ol{\mf{F}}$ of the generalized periodic orbit $\mf{F}$ corresponding to a generalized fixed point $\cl{F} \in \pi_0(\fix(\phi^{k_j})),$ and Hamiltonian $H$ generating $\phi,$ the mean-index $\Delta(H^{(k_j)}, \ol{x}),$ where $\ol{x}$ is a capped $1$-periodic orbit of $H^{(k_j)}$ corresponding to $x \in \cl{F}$ and capping $\ol{\mf{F}}$ is constant as a function of $x \in \cl{F}.$ We refer to Section \ref{subsubsec: mean-index} for the definition of the mean-index.

Finally we call a diffeomorphism $\phi$ with a finite number of (contractible) generalized fixed points {\em weakly non-degenerate} if for each (contractible) fixed point $x$ of $\phi,$ the spectrum of the differential $D(\phi)_x$ at $x$ contains points different from $1 \in \C.$ Using the existence of $\om$-compatible almost complex structures invariant under a Hamiltonian diffeomorphism of finite order, we prove the following structural result.

\begin{thm}\label{thm: torsion is perfect}
	Let  $(M,\om)$ be a closed symplectic manifold. Then a $p$-torsion Hamiltonian diffeomorphism $\phi \in \Ham(M,\om)$ for a prime $p,$ is a weakly non-degenerate generalized perfect Hamiltonian diffeomorphism. {In fact, it is Floer-Morse-Bott and its generalized fixed points are symplectic submanifolds.}
\end{thm}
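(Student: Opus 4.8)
The plan is to exploit a $\phi$-invariant $\om$-compatible almost complex structure to reduce the local geometry near the fixed locus to an equivariant linear model, and then to read off all of the index statements from the identity $\phi^p=\id$ (here $\phi\neq\id$, so $\phi$ has order exactly $p$). First I would produce such a $J$ by an averaging argument: choosing any $\om$-compatible $J_0$ and putting $g=\frac{1}{p}\sum_{j=0}^{p-1}(\phi^j)^*g_{J_0}$ yields a $\phi$-invariant Riemannian metric; writing $\om(\cdot,\cdot)=g(A\,\cdot,\cdot)$ with $A$ invertible and $g$-skew-adjoint, its polar part $J=A(A^*A)^{-1/2}$ is a $\phi$-invariant $\om$-compatible almost complex structure with $g_J=g$.

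Granting such a $J$, the diffeomorphism $\phi$ is an isometry of $(M,g_J)$ of finite order. Bochner's linearization theorem (equivalently, that the fixed locus of an isometry is totally geodesic) shows that $\fix(\phi)$ is a disjoint union of finitely many closed submanifolds with $T_x\fix(\phi)=\ker(D\phi_x-\id)$ at every fixed point $x$. Since $\phi$ preserves $\om$ and $J$, the linearization $D\phi_x$ is a $J_x$-complex symplectic automorphism of order dividing $p$, hence diagonalizable over $\C$ with eigenvalues among the $p$-th roots of unity; decomposing $T_xM\otimes\C$ into eigenspaces $V_\lambda$ and using that $\om$ pairs $V_\lambda$ with $V_\mu$ trivially unless $\mu=\lambda^{-1}$, non-degeneracy of $\om$ forces the $1$-eigenspace $V_1=T_x\fix(\phi)\otimes\C$ to be a $J_x$-complex symplectic subspace. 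Hence every generalized fixed point $\cl F$ — that is, every connected component of $\fix(\phi)$ — is a closed symplectic submanifold, and the equivariant Darboux--Weinstein theorem identifies a neighborhood of $\cl F$ with a disk bundle in its symplectic normal bundle on which $\phi$ acts fiberwise $\C$-linearly with no eigenvalue $1$.

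Weak non-degeneracy is then immediate: if $D\phi_x=\id$ for some fixed $x$, then since an isometry of a connected Riemannian manifold is determined by its $1$-jet at one point (via $\phi\circ\exp_x=\exp_x\circ D\phi_x$) we would get $\phi=\id$, a contradiction, so $\Spec(D\phi_x)\not\subseteq\{1\}$ for all $x\in\fix(\phi)$. For the rest, take $k_j=1+jp\to\infty$, so that $\phi^{k_j}=\phi$ and thus $\fix(\phi^{k_j})=\fix(\phi)$ carries, for every $j$, the same finite collection of (contractible) generalized fixed points; the equality $\ker(D\phi_x-\id)=T_x\fix(\phi)$ is exactly the clean-intersection condition making $\phi$ Floer--Morse--Bott (and, since $p$ is prime, it holds for every iterate $\phi^k$: for $p\nmid k$ because $\gcd(k,p)=1$ forces $1$ to remain the only relevant eigenvalue, and for $p\mid k$ because then $\phi^k=\id$). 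It remains to prove that the mean index is constant along each contractible component $\cl F=F$. Fixing a capping $\ol{\mf F}$ of the corresponding generalized orbit, the iteration formula gives $\Delta(H^{(k_j)},\ol x)=k_j\,\Delta(H,\ol x)$ for $x\in F$, so it suffices to show $\Delta(H,\ol x)$ is constant on $F$; applying iteration once more one has $\Delta(H,\ol x)=\frac{1}{p}\,\Delta(H^{(p)},\ol x^{(p)})$ where $\ol x^{(p)}$ denotes the $p$-fold iterate with its induced capping, and since $\phi^1_{H^{(p)}}=\phi^p=\id$ the linearized flow $\{D(\phi^t_{H^{(p)}})_x\}_{t\in[0,1]}$, read off in the symplectic trivialization induced by the capping, is a loop in $Sp(2n)$ based at the identity. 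The mean index of such a loop is an integer and depends continuously on the loop, so $x\mapsto\Delta(H^{(p)},\ol x^{(p)})$ is locally constant, hence constant, on the path-connected set $F$.

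The main obstacle is this last point. Constancy of the mean index along a Morse--Bott component is false for general Hamiltonians — already for autonomous ones, where the transverse Hessian varies along the critical manifold — and the essential input is that finite order simultaneously forces the $1$-eigenspace of $D\phi_x$ to coincide with $T_x\fix(\phi)$ uniformly along $F$ and, via $\phi^p=\id$, reduces the mean-index computation to that of a loop of symplectic matrices. One must also be a little careful to take the iteration sequence with $p\nmid k_j$, so that $\phi^{k_j}$ continues to have isolated generalized fixed points rather than $\fix(\phi^{k_j})=M$. By comparison, the averaging construction, the linear-symplectic argument identifying $\fix(\phi)$ as a symplectic submanifold, and the isometry-rigidity argument for weak non-degeneracy are routine.
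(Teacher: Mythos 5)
Your proposal is correct, and its overall skeleton is the same as the paper's: average to get a $\phi$-invariant metric and an invariant $\om$-compatible $J$, use isometry rigidity (determination by the $1$-jet) for weak non-degeneracy, use linearization/slice-type arguments to see that $\fix(\phi)$ is a finite union of closed submanifolds with $T_x\fix(\phi)=\ker(D\phi_x-\id)$ (the clean-intersection, Floer--Morse--Bott condition), and use the $(\om_x,J_x)$-linear algebra of $D\phi_x$ to see each component is symplectic. The one step where you genuinely diverge is the constancy of the mean index along a component $\cl F$, which is the delicate point (the paper stresses that this condition is non-vacuous for general Morse--Bott Hamiltonians). The paper argues that, $D\phi_x$ being unitary with spectrum in the $p$-th roots of unity, the spectrum is locally constant along $\cl F$, so the linearizations at two points of $\cl F$ are unitarily conjugate; the capped linearized flows then differ from conjugate paths by loops in $Sp(2n,\R)$, so the difference of mean indices is an integer, and continuity plus path-connectedness forces it to vanish. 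You instead pass to $H^{(p)}$, whose time-one map is the identity, so that the capped linearized flow at each $x\in\cl F$ is a loop in $Sp(2n,\R)$ based at $\id$; its mean index equals twice its Maslov class, hence is an integer and homotopy-invariant, so $x\mapsto\Delta(H^{(p)},\ol x^{(p)})$ is locally constant along $\cl F$, and dividing by $p$ gives constancy of $\Delta(H,\ol x)$. Both are ``integrality plus continuity'' arguments, but yours bypasses the unitary-conjugacy step and does not need the invariant $J$ for this part; like the paper, it does implicitly use a continuously varying family of cappings along $\cl F$ (the lift $\ol{\mf F}$, cf.\ Proposition \ref{existence_of_lifts}), so that the trivialized loops depend continuously on $x$. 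Two trivial slips that do not affect the argument: the polar-decomposition metric $g_J=\om(\cdot,J\cdot)$ is $\phi$-invariant but not equal to your averaged $g$ in general (invariance is all you use), and your choice $k_j=1+jp$ is a perfectly admissible sequence, slightly weaker than the paper's observation that $\fix(\phi^k)=\fix(\phi)$ for all $k$ coprime to $p$, which is the form used later in the paper.
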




Following the index arguments of Salamon-Zehnder \cite{SalamonZehnder}, and their generalization due to Ginzburg-G\"{u}rel \cite{GG-negmon}, we prove the following obstruction to the existence of weakly non-degenerate generalized perfect Hamiltonian diffeomorphisms. 

\begin{thm}\label{thm: generalized perfect CY negmon}
Let a closed symplectic manifold $(M,\om)$ be negative monotone or symplectically Calabi-Yau. Then $(M,\om)$ does not admit weakly non-degenerate generalized perfect Hamiltonian diffeomorphisms.
\end{thm}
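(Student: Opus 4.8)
\begin{pfs}
Suppose, aiming for a contradiction, that $(M,\om)$, of dimension $2n$ and either negative monotone or symplectically Calabi--Yau, carries a weakly non-degenerate generalized perfect Hamiltonian diffeomorphism $\phi \in \Ham(M,\om)$. Let $k_j \to \infty$ be as in the definition of generalized perfect, let $\{\cl{F}\}$ be the common finite collection of contractible generalized fixed points of the $\phi^{k_j}$, and write $\Delta(H^{(k_j)},\ol{\mf{F}})$ for the mean index of a capped generalized orbit over $\cl{F}$, which by hypothesis does not depend on the point $x \in \cl{F}$. The plan is to run the index arguments of Salamon--Zehnder and of Ginzburg--G\"urel in this ``Morse--Bott'' situation. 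First one recalls that $HF_*(\phi^{k_j};\Lambda) \cong QH_*(M;\Lambda) \cong H_*(M;\Lambda)$ as graded modules, so this homology is supported in a \emph{fixed} interval $[d_1,d_2]$ --- in $\Z$ in the Calabi--Yau case, in $\Z/2N\Z$ for $N$ the minimal Chern number in the negative monotone case --- with $d_2 - d_1 = 2n$, and is nonzero at both ends since $b_0(M) = b_{2n}(M) = 1$. Next one invokes the Morse-type inequality $\dim HF_d(\phi^{k_j}) \le \sum_{\cl{F},\ol{\mf{F}}} \dim HF^{\loc}_d(\phi^{k_j},\cl{F},\ol{\mf{F}})$, together with the support bound $HF^{\loc}_d(\phi^{k_j},\cl{F},\ol{\mf{F}}) = 0$ whenever $|d - \Delta(H^{(k_j)},\ol{\mf{F}})| > n$. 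It is here that being \emph{generalized} perfect is used: constancy of the mean index along $\cl{F}$ is exactly what keeps this band of width $2n$ centred at the single number $\Delta(H^{(k_j)},\ol{\mf{F}})$, rather than over the possibly larger interval determined by the extremal mean indices on $\cl{F}$.

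The decisive step, and the only place where \emph{weak non-degeneracy} enters, is to sharpen the support bound to a \emph{strict} band: $HF^{\loc}_d(\phi^{k_j},\cl{F},\ol{\mf{F}}) = 0$ unless $|d - \Delta(H^{(k_j)},\ol{\mf{F}})| < n$. To see this, fix $x \in \cl{F}$ and split $(T_xM,\om_x)$ symplectically as $E_1 \oplus W$, where $E_1$ is the generalized $1$-eigenspace of $D(\phi)_x$ and $W$ is its $\om$-orthogonal complement. By definition, weak non-degeneracy says that $D(\phi)_x$ has an eigenvalue different from $1$, i.e. $\dim W = 2s$ with $s \ge 1$ and $D(\phi)_x|_W$ has no eigenvalue $1$; this factor is therefore non-degenerate, and the Salamon--Zehnder index inequality gives $|\mu_{CZ}^W - \Delta^W| < s$ \emph{strictly}, while the index-neutral factor $E_1$ ($\Delta^{E_1} = 0$, and $T_x\cl{F} \subseteq E_1$) contributes to local Floer homology only in degrees within distance $n - s$ of the $W$-degree. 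Adding these and using $\Delta^W = \Delta(H^{(k_j)},\ol{\mf{F}})$, the support lies strictly inside $(\Delta(H^{(k_j)},\ol{\mf{F}}) - n,\ \Delta(H^{(k_j)},\ol{\mf{F}}) + n)$. For $\phi$ arising from Hamiltonian torsion this is cleanest: by Theorem~\ref{thm: torsion is perfect}, $D(\phi)_x$ is semisimple, $E_1 = T_x\cl{F}$, each $\cl{F}$ is a symplectic submanifold and $\phi$ is Floer--Morse--Bott, so $HF^{\loc}(\phi^{k_j},\cl{F},\ol{\mf{F}})$ is a degree shift of $H_*(\cl{F})$ occupying a band of width $\dim\cl{F} = 2n - 2s$ centred near $\mu_{CZ}^W$; for the statement as written one additionally uses the stabilization and persistence properties of local Floer homology, as in \cite{GG-negmon}.

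Granting the strict band, the Calabi--Yau case closes at once. At least one of the inequalities $d_2 \ge n$ and $d_1 \le -n$ holds; assume the former, the latter being symmetric. Since $HF_{d_2}(\phi^{k_j}) \ne 0$, some $\cl{F}$ and capping have $HF^{\loc}_{d_2}(\phi^{k_j},\cl{F},\ol{\mf{F}}) \ne 0$, hence $|d_2 - \Delta(H^{(k_j)},\ol{\mf{F}})| < n$, which forces $0 < \Delta(H^{(k_j)},\ol{\mf{F}}) < d_2 + n$. In the Calabi--Yau case $\langle c_1(TM),\pi_2(M)\rangle = 0$, so the mean index is unaffected by recapping and, by the iteration formula, $\Delta(H^{(k_j)},\ol{\mf{F}}) = k_j\,\delta(\cl{F})$ for a fixed number $\delta(\cl{F})$ attached to $\cl{F}$. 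There being finitely many generalized fixed points, for $j$ large every $\cl{F}$ with $\delta(\cl{F}) > 0$ has $k_j\,\delta(\cl{F}) > d_2 + n$, so no $\cl{F}$ can satisfy $0 < \Delta(H^{(k_j)},\ol{\mf{F}}) < d_2 + n$ --- a contradiction. Thus $(M,\om)$ admits no such $\phi$ when it is symplectically Calabi--Yau.

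The negative monotone case is what I expect to be the main obstacle, since the grading is only $\Z/2N\Z$ and, because $c_1(TM) = \lambda[\om]$ with $\lambda < 0$ is nonzero on spheres, recapping a capped generalized orbit by $A \in \pi_2(M)$ simultaneously shifts its action by $-\langle[\om],A\rangle$ and its mean index by $2\lambda\langle[\om],A\rangle$. The saving observation, exactly as in Ginzburg--G\"urel, is that $\Delta(\ol{\mf{F}}) - 2\lambda\,\cA(\ol{\mf{F}})$ is then recapping-invariant: the mean index of a capped generalized orbit is pinned to its action. The plan is to carry out the degree argument above inside filtered Floer homology --- on a bounded action window the admissible mean indices, hence the relevant local Floer degrees, range over a bounded set, while the spectral invariants of $[M]$ and of $[pt]$ for $H^{(k_j)}$ take over the role of the extremal degrees $d_1, d_2$ and are controlled along the iterations by spectrality together with the action--index correlation. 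The conceptual skeleton is identical to the Calabi--Yau argument; the bookkeeping of cappings, actions, and the cyclic grading is the technical heart, and the requisite framework is already in place in \cite{GG-negmon}.
\end{pfs}
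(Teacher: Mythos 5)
Your Calabi--Yau argument is correct and is essentially the paper's own: fix the top degree in which $HF_*(\phi^{k_j})\cong QH_{*+n}(M)$ is nonzero, use the strict support bound $(\Delta-n,\Delta+n)$ for weakly non-degenerate generalized fixed points (this is Lemma \ref{lma:support_localFH_gen_perf}, which the paper establishes; your $E_1\oplus W$ sketch of it is only heuristic, since the path of linearizations $D(\phi^t_H)_x$ need not preserve that splitting and the mean index does not literally decompose, but this is secondary), note that in the Calabi--Yau case $\Delta(H^{(k_j)},\cl F)=k_j\,\delta(\cl F)$ is capping-independent, and let $k_j\to\infty$ kill every possible contributor to that degree. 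The genuine gap is the negative monotone half: there you give no argument at all, only a declaration of intent (``the plan is to carry out the degree argument inside filtered Floer homology \dots the requisite framework is already in place in \cite{GG-negmon}''). Since negative monotone manifolds are half of the statement, this leaves the theorem unproved; the filtered, action--index-correlation route of Ginzburg--G\"urel could plausibly be adapted, but the bookkeeping you defer (cappings, the $\Z/2N\Z$-grading, control of the relevant action windows along the iterates $k_j$, which need not divide one another) is precisely the content that would have to be supplied.

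For comparison, the paper's negative monotone proof is short and avoids filtered Floer homology entirely. By the pigeonhole principle there is a single generalized fixed point $\cl F$ and a subsequence $r_i$ such that $c([M],H^{(r_i)})$ is carried by a capped generalized orbit $\ol{\mf G}_i$ over $\cl F^{(r_i)}$; write $\ol{\mf G}_i=\ol{\mf G}^{(r_i)}\#A_i$ (after reducing to $r_1=1$, or, when $r_1$ does not divide the later $r_i$, passing to $\gcd$'s and to $\Gamma^{\Q}=\Gamma\otimes_{\Z}\Q$). Subadditivity of the fundamental-class spectral invariant gives $r_i\cA(\ol{\mf G})-\om(A_i)=c([M],H^{(r_i)})\le r_i\,c([M],H)=r_i\cA(\ol{\mf G})$, hence $\om(A_i)\ge 0$; on the other hand, being a carrier forces $HF^{\loc}_n\ne 0$, so by weak non-degeneracy $\Delta(H^{(r_i)},\ol{\mf G}_i)\in(0,2n)$ and $\Delta(H,\ol{\mf G})\in(0,2n)$, whence for $r_i$ large $2n>r_i\Delta(H,\ol{\mf G})-2c_1(A_i)$ yields $c_1(A_i)>0$. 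Together these contradict $[\om]=\kappa\,c_1(TM)$ with $\kappa<0$. Note that the mechanism is one-sided control of the action of the carrier via the triangle inequality combined with the bounded mean-index window, not a comparison of extremal degrees in a filtered complex; if you want to salvage your outline, this is the concrete argument you would need to reproduce or replace.
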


Together with Theorem \ref{thm: torsion is perfect}, Theorem \ref{thm: generalized perfect CY negmon} immediately implies Theorem \ref{thm: neg mon tors}, after the elementary observation that in view of Cauchy's theorem for finite groups, to rule out all Hamiltonian finite group actions it is sufficient to rule out all Hamiltonian torsion of prime order. One can say that, almost paradoxically, we use the {\em large time asymptotic behavior} of our Hamiltonian system to study its {\em periodic} dynamics! This can be considered to be the main general idea of this paper.

We remark that, as easy examples show, generalized perfect Hamiltonian diffeomorphisms do indeed exist on the manifolds of Theorem \ref{thm: generalized perfect CY negmon}, if one drops the weak non-degeneracy assumption. For example, one can take $T^2 = S^1 \times S^1,$ $S^1 = \R/\Z$ to be the standard torus with $(x,y)$ denoting a general point, and $\om_{st} = dx \wedge dy$ the standard symplectic form, and pick $\phi \in \Ham(T^2,\om_{st}),$ $\phi = \phi^t_H,$ $t>0,$ for $H \in C^{\infty}(T^2,\R)$ given by $H(x,y) = \cos(2\pi y).$ It is easy to see that the set of contractible periodic points of $\phi$ consists precisely of the two isolated sets $\{y = 0 \},$ and $\{y = \frac{1}{2}\}.$


\subsubsection{Conditions in terms of pseudo-holomorphic curves}

Our second set of results deals with the class of monotone symplectic manifolds. It is evident that far more than topological conditions are necessary to rule out Hamiltonian torsion in this case, since each Hamiltonian $S^1$-manifold, such as $\C P^n$ for example, admits Hamiltonian torsion. We formulate our restriction on the existence of Hamiltonian torsion geometrically as follows. For an $\om$-compatible almost complex structure $J$ on a closed symplectic manifold $(M,\om)$ we say that the manifold is {\em geometrically uniruled} if for each point $p \in M,$ there exists a $J$-holomorphic sphere $u:\C P^1 \to M,$ such that $p \in \ima(u).$

\begin{thm}\label{thm: torsion to geom uniruled}
Let $(M,\om)$ be a closed monotone symplectic manifold, that is not geometrically uniruled for a certain $\om$-compatible almost complex structure $J.$ Then each homomorphism $G \to \Ham(M,\om),$ where $G$ is a finite group, is trivial. 
\end{thm}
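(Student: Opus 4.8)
The plan is to reduce the statement, exactly via the Cauchy-theorem argument used to pass from Theorem \ref{thm: generalized perfect CY negmon} to Theorem \ref{thm: neg mon tors}, to the assertion that a closed monotone $(M,\om)$ which is not geometrically uniruled admits no non-trivial $p$-torsion $\phi \in \Ham(M,\om)$ for a prime $p$. If $(M,\om)$ is symplectically Calabi--Yau or negative monotone this is already contained in Theorem \ref{thm: neg mon tors} (and non-uniruledness is then automatic), so the content is the positive monotone case, which I address now. Assume, for contradiction, that $\phi \neq \id$ has order $p$. By Theorem \ref{thm: torsion is perfect}, $\phi$ is a weakly non-degenerate generalized perfect Hamiltonian diffeomorphism which is, moreover, Floer--Morse--Bott, with generalized fixed points $\cl F_1,\dots,\cl F_r$ that are symplectic submanifolds on each of which the mean index (for any fixed capping) is constant.

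The heart of the proof is to show that the existence of such a $\phi$ forces $(M,\om)$ to be $\bF_p$-Steenrod uniruled; since $\bF_p$-Steenrod uniruledness implies geometric uniruledness \cite{S-PRQS, SSW}, this contradicts the hypothesis and finishes the argument. To establish this I would use that $\phi^p = \id$, so that the cyclic group $\langle \phi \rangle \cong \Z/p$ acts and one may feed the pair $(\phi, \phi^p = \id)$ into the equivariant pair-of-pants / Smith-theory package in filtered Floer homology \cite{Seidel-pants, SZhao-pants, S-HZ} together with the quantum Steenrod power formalism \cite{Wilkins, Wilkins-PSS, SSW}. In the Morse--Bott situation provided by Theorem \ref{thm: torsion is perfect}, the local Floer homologies of $\phi$ at the $\cl F_i$ assemble, through a Smith-type inequality, into a $\Z/p$-Tate object whose underlying structure is that of $QH(M;\bF_p)$ twisted by the quantum Steenrod power $QS_p$ applied to the point class. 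If $(M,\om)$ were not $\bF_p$-Steenrod uniruled, $QS_p([pt])$ would be undeformed, and the persistence-module arguments developed for Hamiltonian pseudo-rotations in \cite{S-PRQS, CGG2, S-PRQSR} --- which Theorem \ref{thm: torsion is perfect} makes available in the present, more degenerate but more symmetric, generalized setting --- would rigidly pin down the action and index data of the $\cl F_i$: one is forced down to essentially a single generalized fixed point with prescribed mean index and a barcode indistinguishable from that of the identity. Combining this with generalized perfectness (the generalized fixed point set is the same for all the iterates $\phi^{k_j}$) and the index estimates of Salamon--Zehnder and Ginzburg--G\"urel \cite{SalamonZehnder, GG-negmon}, exactly as in the proof of Theorem \ref{thm: generalized perfect CY negmon}, would force $\phi = \id$, the desired contradiction. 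Hence $(M,\om)$ is $\bF_p$-Steenrod uniruled, and we are done.

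I expect the main difficulty to lie in the middle step: setting up the equivariant Smith inequality and the identification of its Tate-limit with the $QS_p$-twisted quantum homology in the \emph{generalized} (Morse--Bott, symplectic-submanifold) fixed-point situation, and then importing the pseudo-rotation-style persistence arguments to this setting so as to extract the precise consequence of the non-deformation of $QS_p([pt])$. A smaller point requiring care is the dependence on the prime: for a torsion element of order $k$ one applies the above to a power of it of exact prime order $p$ dividing $k$, which is legitimate since the Morse--Bott conclusion of Theorem \ref{thm: torsion is perfect} holds for every prime, so any prime factor of $k$ works. Given all this, the reduction to arbitrary finite groups $G$ is, once more, Cauchy's theorem: a non-trivial homomorphism $G \to \Ham(M,\om)$ would have in its image an element of prime order, contradicting the above.
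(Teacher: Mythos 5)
Your overall skeleton agrees with the paper's (reduce via Cauchy's theorem to prime torsion, show the quantum Steenrod power of the point class is deformed, and conclude geometric uniruledness from $\F_p$ Steenrod uniruledness), but the middle of your argument has a genuine gap, and in one place it runs against how the machinery actually works. The paper does \emph{not} feed the pair $(\phi,\phi^p=\id)$ into the Smith-theoretic package with the coefficient prime equal to the order of $\phi$. The Smith inequality in filtered Floer homology (Theorem \ref{thm: Smith}) compares $\phi$ with $\phi^p$ and requires $\fix(\phi^p)$ to consist of finitely many generalized fixed points homeomorphic to those of $\phi$; when $\phi^p=\id$ this hypothesis fails outright ($\fix(\phi^p)=M$), so your proposed use of the $\Z/p$-symmetry of $\phi$ itself does not get off the ground. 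The paper's trick is the opposite: for a torsion element of prime order $q$ one works with coefficients $\F_p$ for primes $p$ \emph{coprime} to $q$, applies the Smith inequality to the non-trivial iterates $\phi^{j p^k}$ (which cycle among the finitely many powers of $\phi$), and deduces $\beta_{\mrm{tot}}(\phi,\F_p)=0$, i.e.\ that $\phi$ is a weakly non-degenerate generalized $\F_p$ pseudo-rotation (Theorem \ref{thm: torsion is PR}\ref{G: case 1}). Accordingly, Theorem \ref{thm: St uniruled} only asserts deformation of the $p$-th Steenrod power for $p$ coprime to the order, which suffices for Theorem \ref{thm: torsion to geom uniruled}; your claim of $\F_p$-Steenrod uniruledness for $p$ equal to the order is both stronger than what is proved and unsupported by your sketch.

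The second half of your argument is also not a proof. Granting that $\phi$ is a generalized $\F_p$ pseudo-rotation, the paper's contradiction with non-deformation of $QSt_p(\mu)$ is \emph{not} ``the barcode is indistinguishable from that of the identity, hence $\phi=\id$'': a pseudo-rotation has no finite bars and yet is not the identity (rotations of $S^2$, with two fixed points, already show that neither ``a single generalized fixed point'' nor triviality can be forced this way). Instead, non-deformation yields the spectral inequality $c(\mu,\til\psi^p)\geq p\,c(\mu,\til\psi)$ (Theorem \ref{thm: spec ineq}), and then a pigeonhole/action-carrier argument combined with the mean-index localization $\Delta\in(0,2n)$ of Lemma \ref{lma:carrier_spectral_invariant_genralized-QH} produces, after recapping iterated carriers, a class $A_i\in\Gamma$ with $\om(A_i)\leq 0$ and $c_1(A_i)>0$, contradicting \emph{positive} monotonicity (Theorem \ref{thm: Morse-Bott PR uniruled}). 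Your appeal to the index argument ``exactly as in the proof of Theorem \ref{thm: generalized perfect CY negmon}'' cannot substitute for this: that argument uses the Calabi--Yau or negative monotone sign assumptions on $\om$ versus $c_1$ and does not apply in the positive monotone case, which is precisely the case at stake here. So the two load-bearing steps --- establishing homological minimality with an auxiliary prime coprime to the order, and converting non-deformation of the Steenrod power into a contradiction with monotonicity via spectral carriers --- are missing from your proposal.
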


This is a corollary of the following more precise result involving the quantum Steenrod power operations.

\begin{thm}\label{thm: St uniruled}
Let $(M,\om)$ be a closed monotone symplectic manifold that admits a Hamiltonian diffeomorphism of order $d > 1.$ Then the $p$-th quantum Steenrod power of the cohomology class $\mu \in H^{2n}(M; \bF_p)$ Poincar\'{e} dual to the point class is deformed for all primes $p$ coprime to $d.$ 
\end{thm}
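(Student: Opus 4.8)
The plan is to run the standard contrapositive. Fix a Hamiltonian diffeomorphism $\phi$ of order $d>1$, and suppose towards a contradiction that for some prime $p$ with $p\nmid d$ the $p$-th quantum Steenrod power of $\mu$ is \emph{not} deformed, i.e. $(M,\om)$ is not $\F_p$-Steenrod uniruled. By Cauchy's theorem the cyclic group $\langle\phi\rangle\cong\Z/d$ contains an element $\psi$ of prime order $q$ for each prime $q\mid d$; fix such a $q$ and note that $p\nmid d$ forces $q\neq p$ — this is where the coprimality hypothesis enters. Applying Theorem \ref{thm: torsion is perfect} to the $q$-torsion element $\psi$, fix a $\psi$-invariant Hamiltonian $H$ generating $\psi$ whose Floer data are of Morse--Bott type, with critical manifolds the symplectic submanifolds $\cF_1,\dots,\cF_r\subseteq\fix(\psi)$ and mean index locally constant along each $\cF_i$.

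The key input is the filtered $\Z/p$-equivariant Smith-type package of \cite{Seidel-pants,SZhao-pants,S-HZ} and its odd-prime refinement \cite{SSW}: when $(M,\om)$ is not $\F_p$-Steenrod uniruled, the equivariant $p$-th iteration map is an isomorphism onto the localized equivariant filtered Floer homology of $\phi_{G^{(p)}}$, for every Hamiltonian $G$. In persistence-module terms this says that iteration by $p$ dilates the filtered Floer barcode of $\phi_G\in\Ham(M,\om)$ by the factor $p$ (up to the infinite bars contributed by $\rH^*(B\Z/p;\F_p)$); in particular $\gamma(\phi_{G^{(p)}})\geq p\,\gamma(\phi_G)$. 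I would first make this available in the Morse--Bott setting of Theorem \ref{thm: torsion is perfect} — either by running the Smith machinery directly on the Morse--Bott complexes, or by approximating $\psi$ and its iterates by compatible $\Z/p$-equivariant non-degenerate perturbations and invoking $C^0$-stability of $\gamma$ — and then iterate: using $(\phi_H^{(p)})^{(p^{m-1})}=\phi_{H^{(p^m)}}$ and that $\phi_{H^{(p^m)}}$ represents $\psi^{p^m}$,
\[
\gamma(\psi^{p^m})\ \geq\ p^{m}\,\gamma(\psi)\qquad\text{for all }m\geq 0.
\]

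Finiteness of the order now closes the argument. Since $\gcd(p,q)=1$, the iterate $\psi^{p^m}$ depends only on $p^m\bmod q$, so as $m$ varies it runs through the finite set $\langle\psi\rangle$; hence $\{\gamma(\psi^{p^m}):m\geq 0\}$ is finite, in particular bounded. Comparing with the displayed inequality forces $\gamma(\psi)=0$. But $\gamma$ is a genuine norm on $\Ham(M,\om)$ \cite{Schwarz:action-spectrum,Oh-specnorm,Viterbo-specGF}, so $\gamma(\psi)=0$ yields $\psi=\id$, contradicting $\psi\neq\id$. (Alternatively, $(M,\om)$ being monotone, hence rational, one may invoke the lower bound $\gamma(\psi)\geq\rho/q>0$ for $q$-torsion proved elsewhere in the paper, which is logically independent of the present statement.) This contradiction shows that $(M,\om)$ is $\F_p$-Steenrod uniruled, i.e. the $p$-th quantum Steenrod power of $\mu$ is deformed, for every prime $p$ coprime to $d$.

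I expect the crux to be the second paragraph: transporting the quantitative, \emph{filtered} Smith isomorphism of \cite{S-HZ,SSW} — originally formulated for non-degenerate Hamiltonians — into the generalized-fixed-point, Morse--Bott setting of Theorem \ref{thm: torsion is perfect}. One must arrange the $\Z/p$-equivariant perturbations (or Morse--Bott data) compatibly with the $p$-fold covering of the circle, check that the action filtration rescales exactly by $p$ on iterated cappings, and verify that the dichotomy ``not $\F_p$-Steenrod uniruled $\Rightarrow$ iteration map is an isomorphism'' survives in this generality and persists under the perturbation, uniformly enough to pass to the limit for $\gamma$. The reduction to prime order, the iteration of the dilation bound, and the boundedness argument are comparatively routine.
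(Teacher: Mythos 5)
There is a genuine gap, and it sits exactly where you locate the ``key input.'' The statement you want to import --- ``if $(M,\om)$ is not $\F_p$-Steenrod uniruled then iteration by $p$ dilates the filtered barcode, in particular $\gamma(\phi_{G^{(p)}})\geq p\,\gamma(\phi_G)$ for \emph{every} Hamiltonian $G$'' --- is not what \cite{S-PRQS,SSW,S-HZ} prove, and in that generality it is false. By the triangle inequality one always has $\gamma(\phi_G^{\,p})\leq p\,\gamma(\phi_G)$, so your inequality would force $\gamma(\phi_G^{\,p^m})=p^m\gamma(\phi_G)\to\infty$; but take any $\phi_G\neq\id$ supported in a small displaceable ball $B$: all its powers are supported in $B$ and $\gamma(\phi_G^{\,k})$ is uniformly bounded by twice the displacement energy of $B$, irrespective of any Steenrod condition. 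What the Smith-theoretic package gives for an \emph{arbitrary} $\phi$ (with isolated generalized fixed sets) is the total bar-length inequality $\beta_{\mrm{tot}}(\phi^p,\F_p)\geq p\cdot\beta_{\mrm{tot}}(\phi,\F_p)$ of Theorem \ref{thm: Smith} --- a statement about finite bars, not about $\gamma$, and one that needs no uniruledness hypothesis at all. The dilation statement that does use ``not $\F_p$-Steenrod uniruled'' is Theorem \ref{thm: spec ineq}, $c(\mu,\til{\psi}^p)\geq p\,c(\mu,\til{\psi})$, and it is only available for generalized $\F_p$ pseudo-rotations and for fixed lifts $\til{\psi}\in\til{\Ham}(M,\om)$.

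Consequently your argument is missing the two steps the paper actually performs. First, one must show that the torsion element is a weakly non-degenerate generalized $\F_p$ pseudo-rotation (Theorem \ref{thm: torsion is PR}); this is exactly where the finite-order/boundedness trick belongs, applied to $\beta_{\mrm{tot}}(\phi^{j\cdot p^k},\F_p)$, which takes only finitely many values, forcing $\beta_{\mrm{tot}}(\phi^j,\F_p)=0$. Second, even once Theorem \ref{thm: spec ineq} applies, your closing move (``finitely many powers, hence $\gamma$ bounded, hence $\gamma(\psi)=0$'') does not go through: the inequality lives on lifts, and the spectral invariants of the lifts $\til{\psi}^{p^m}$ of the finitely many elements $\psi^k$ are only determined up to shifts in $\rho\cdot\Z$, so they form an unbounded set and no contradiction arises from boundedness alone. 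The paper instead concludes via the action-carrier/mean-index argument of Theorem \ref{thm: Morse-Bott PR uniruled}: pigeonhole a generalized fixed point carrying $c(\mu,H^{(p^{k_i})})$ for infinitely many $i$, write the carriers as recapped iterates $\ol{\mf{G}}^{(r_i)}\#A_i$, and use the spectral inequality to get $\om(A_i)\leq 0$ while the support bound \eqref{eq: Delta pos} (which uses weak non-degeneracy from Theorem \ref{thm: torsion is perfect}) gives $c_1(A_i)>0$, contradicting monotonicity. Your Morse--Bott transport concerns are real but secondary; without the pseudo-rotation reduction and the carrier/index step, the proposal does not prove the theorem.
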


Theorems \ref{thm: torsion to geom uniruled} and \ref{thm: St uniruled} provide an obstruction to the existence of Hamiltonian diffeomorphisms of finite order in terms of pseudo-holomorphic curves. The existence of an obstruction of this type was conjectured by McDuff and Salamon, and publicized as Problem 24 in their introductory monograph \cite{McDuffSalamonIntro3}. Therefore we provide a solution to a reasonable variant of this problem. Indeed, further investigations into the enumerative nature of quantum Steenrod operations might prove that our solution is in fact complete, in the framework of monotone symplectic manifolds. Such investigations were initiated in \cite{Wilkins,Wilkins-PSS,SeidelWilkins}.




%


The proof of Theorem \ref{thm: St uniruled} relies on two steps. These steps are aimed at showing that torsion Hamiltonian diffeomorphisms of closed monotone symplectic manifolds, that by Theorem \ref{thm: torsion is perfect} are generalized perfect and weakly non-degenerate, are moreover homologically minimal in the following sense. To formulate it precisely, we first discuss a useful technical notion.   

Let $\bK$ be a coefficient field. For a generalized fixed point $\cl{F}$ of a Hamiltonian diffeomorphism $\psi$ we define a generalized version $HF^{\loc}(\psi,\cF)$ of local Floer homology. Such notions date back to the original work of Floer \cite{Floer-MorseWitten,Floer3} and have been revisted a number of times: for example by Pozniak in \cite{Pozniak}. It is naturally $\Z/2\Z$-graded\footnote{We also define a $\Z$-graded version for a capped generalized $1$-periodic point $\ol{\fF}$ lifting $\cF.$}.

We call a Hamiltonian diffeomorphism a {\em generalized $\bK$ pseudo-rotation} with the sequence $k_j$ if it is generalized perfect with the sequence $k_j,$ and furthermore $HF^{\loc}(\psi,\cl{F}) \neq 0$ for all $\cl{F} \in \pi_0 (\fix(\psi)),$ and the homological count \[N(\psi,\bK) := \sum_{\cF \in \pi_0 (\fix(\psi))} \dim_{\bK} HF^{\loc}(\psi,\cF)\] of generalized fixed points of $\psi = \phi^{k_j}$ satisfies \[ N(\psi,\bK) = \dim_{\bK} H_*(M;\bK)\] for all $j \in \Z_{>0}.$ We recall that usually an $\F_p$ pseudo-rotation is defined analogously, with the sequence $k_j = p^{j-1}$ with the additional hypothesis that each $\cl{F} \in \pi_0(\fix(\psi))$ for $\psi = \phi^{k_j}$ consists of a single point. We note that $k_j = p^{j-1}$ shall often be a convenient sequence for us to work with.

In view of the discussion in \cite{S-HZ, S-Zoll} this homological minimality for a Hamiltonian diffeomorphism $\psi$ with a finite number of generalized fixed points is equivalent to the absence of finite bars in the barcode $\cB(\psi)$ of $\psi,$ a notion of recent interest in symplectic topology (see \cite{PolShe,PolSheSto,KS-bounds,S-HZ, S-Zoll} for example). It also implies the equality \[\Spec^{ess}(F;\bK) = \Spec^{vis}(F;\bK)\] between two homologically defined subsets of the spectrum associated to a Hamiltonian $F \in \cl{H}$ generating $\psi.$ Recall that the spectrum $\Spec(F)$ of $F$ is the set of critical values of the action functional of $F.$ For a coefficient field $\bK,$ there is a nested sequence of subsets \[\Spec^{ess}(F;\bK) \subset \Spec^{vis}(F;\bK) \subset \Spec(F).\] Here the {\em essential spectrum} $\Spec^{ess}(F;\bK)$ is the set of values of all spectral invariants associated to $F,$ in other words the set of starting points of infinite bars in the barcode of $F,$ and the {\em visible spectrum} $\Spec^{vis}(F;\bK)$ is the set of action values of capped (generalized) periodic orbits of $F$ that have non-zero local Floer homology, in other words the set of endpoints of all bars in the barcode. It is not hard to modify the definitions of the two homological spectra to include multiplicities, in which case their equality would be equivalent to homological minimality.

The first step in the proof of Theorem \ref{thm: St uniruled}, which is non-trivial and uses the main methods of \cite{S-HZ}, is the following reduction.


\begin{thm}\label{thm: torsion is PR}
Let $(M,\om)$ be a closed monotone symplectic manifold. Suppose that $\phi \in \Ham(M,\om)$ is a Hamiltonian diffeomorphism of prime order $q \geq 2.$ Then:

\begin{enumerate}[label = (\roman*)]
\item \label{G: case 1} For each prime $p$ different from $q,$ the $q$-torsion diffeomorphism $\phi$ is a weakly non-degenerate generalized pseudo-rotation over $\F_p,$ with the sequence $k_j$ given by the monotone-increasing ordering of any infinite subset of the set \[\{k \in \Z\;|\; k \neq 0 \;(\mathrm{mod}\; q) \}.\]
\item \label{G: case 2} Moreover \[\Spec^{ess}(H; \bK) = \Spec^{vis}(H;\bK)\] for each Hamiltonian $H$ generating $\phi,$ and each coefficient field $\bK,$ of characteristic $p \neq q$ and \[\Spec^{ess}(H^{(k)}; \Q) = k\cdot \Spec^{ess}(H; \Q) + \rho \cdot \Z\] for all $k$ coprime to $q.$ 
\item  \label{G: case 3} Finally, part \ref{G: case 1} holds also for $p=q,$ and in part \ref{G: case 2} \[\Spec^{ess}(H; \bK) = \Spec^{vis}(H;\bK),\] \[\Spec^{ess}(H^{(k)}; \bK) = k\cdot \Spec^{ess}(H; \bK) + \rho \cdot \Z\] hold with arbitary coefficient field $\bK,$ and moreover \[\Spec^{vis}(H;\bK) = \Spec(H).\] 
\end{enumerate}

 

\end{thm}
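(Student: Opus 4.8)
\emph{Setup and the role of Smith theory.} The plan is to leverage the filtered Smith-theory package of \cite{S-HZ}, \cite{SSW} applied to the $\Z/q$-action generated by the $q$-torsion diffeomorphism $\phi$. Since $\phi$ has prime order $q$, for any Hamiltonian $H$ generating $\phi$ the iterated Hamiltonian $H^{(k)}$ with $k\not\equiv 0 \pmod q$ still generates a generator of the same cyclic group, so the whole collection $\{\phi^k : k\not\equiv 0\pmod q\}$ consists of $q$-torsion elements. By Theorem \ref{thm: torsion is perfect}, $\phi$ — and each such $\phi^k$ — is Floer-Morse-Bott, weakly non-degenerate, and generalized perfect, with generalized fixed points that are symplectic submanifolds, and the mean-index is constant along each such submanifold. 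The first task is therefore to upgrade ``generalized perfect'' to ``generalized pseudo-rotation over $\F_p$'': one must show $HF^{\loc}(\phi^{k_j},\cF)\neq 0$ for every component $\cF$, and that $N(\phi^{k_j},\F_p)=\dim_{\F_p} H_*(M;\F_p)$. For $p\neq q$ this is exactly where the localization theorem in filtered Floer homology enters: the $\F_p$-Floer homology of the $p$-fold iterate relates, via the equivariant pair-of-pants/quantum-Steenrod machinery, to the Floer homology of $\phi$ itself, which for $\phi$ of order $q$ coprime to $p$ is forced (after passing to a suitable iterate in the arithmetic progression) to be ``as large as possible,'' i.e. of total rank $\dim_{\F_p} H_*(M;\F_p)$. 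I would first establish the non-vanishing $HF^{\loc}\neq 0$ of each component using the Morse-Bott structure (each component is a closed symplectic submanifold, so its local Floer homology is the homology of that submanifold, up to a shift, hence non-zero), and then run the global count.

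\emph{The homological count and minimality.} The heart of part \ref{G: case 1} is the inequality $N(\phi^{k_j},\F_p)\le \dim_{\F_p}H_*(M;\F_p)$ in one direction (a Morse-type / rank inequality coming from the spectral sequence of the filtered complex, valid for any Hamiltonian whose fixed-point set is a finite union of Morse-Bott components with nonzero local homology) together with the opposite inequality $N(\phi^{k_j},\F_p)\ge \dim_{\F_p}H_*(M;\F_p)$ coming from Smith theory: the $\F_p$-Smith inequality applied to the $\Z/p$-action on the loop space / Floer complex of $\phi^{k_j}$ (for suitable $k_j$) bounds the total homology of $M$ below by the equivariant fixed-point count, which in the present situation is governed by $N$. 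Combining the two inequalities forces equality, hence homological minimality, hence the barcode $\cB(\phi^{k_j})$ has no finite bars, which is precisely the statement $\Spec^{ess}=\Spec^{vis}$ (with multiplicity). This gives \ref{G: case 1} and the first assertion of \ref{G: case 2}. For the iteration formula $\Spec^{ess}(H^{(k)};\Q)=k\cdot\Spec^{ess}(H;\Q)+\rho\cdot\Z$ I would argue as in \cite{S-HZ}: the action values of capped orbits of $H^{(k)}$ are $k$ times those of $H$ modulo the period group $\rho\cdot\Z$ (monotonicity gives a single period $\rho$), the mean-index constancy along components plus weak non-degeneracy pins down which values survive as starting points of infinite bars, and the equality $\Spec^{ess}=\Spec^{vis}$ just proved transports this from $H$ to all coprime iterates.

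\emph{The prime case $p=q$.} Part \ref{G: case 3} is the genuinely new point relative to the pseudo-rotation literature: one must also run Smith theory at the prime \emph{dividing} the order. Here the key is that, although $\phi^{k}$ for $k\equiv 0\pmod q$ degenerates to the identity, we only ever iterate along $k\not\equiv 0\pmod q$, so the $\Z/q$-symmetry persists; moreover an $\om$-compatible $\Z/q$-invariant almost complex structure exists (this is used in Theorem \ref{thm: torsion is perfect}), and with respect to it the $\Z/q$-action on the Floer complex is by an honest chain-level symmetry whose fixed subcomplex is again Morse-Bott. Applying the mod-$q$ localization theorem then yields the same chain of inequalities, and additionally — because the action is now \emph{trivial on homology at the level of the fixed locus}, the generalized orbits being literally preserved — one gets the stronger conclusion that \emph{every} capped generalized orbit with nonzero local Floer homology carries a bar endpoint, i.e. $\Spec^{vis}(H;\bK)=\Spec(H)$, and the iteration formula holds over an arbitrary field. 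I would obtain this last equality by showing there are no ``invisible'' action values: any critical value of the action functional is realized by a generalized orbit, which by the Morse-Bott structure and non-vanishing of $HF^{\loc}$ of each component contributes a bar.

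\emph{Main obstacle.} The principal difficulty is the mod-$q$ localization step in part \ref{G: case 3}: the Smith-theoretic comparison in filtered Floer homology in \cite{S-HZ} is set up for the iteration sequence $k_j=p^{j-1}$ in the \emph{non-degenerate pseudo-rotation} setting, whereas here we need it (i) in the Morse-Bott generalized-fixed-point setting, (ii) along an arbitrary infinite subprogression of $\{k:k\not\equiv 0\pmod q\}$, and (iii) at the prime $q$ itself, where the naive iterate vanishes. Reconciling these — in particular verifying that the equivariant Floer-theoretic constructions (equivariant pair-of-pants product, the relevant $BZ/q$-module structure, and the filtered spectral sequence) go through with Morse-Bott generalized orbits and give the sharp rank and action-value statements uniformly in the progression — is the technical core, and is presumably why Theorem \ref{thm: torsion is PR} is singled out as the non-trivial reduction. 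The $p\neq q$ cases, by contrast, should follow by adapting the arguments of \cite{S-HZ}, \cite{SSW} essentially verbatim once the Morse-Bott local Floer homology bookkeeping from Theorem \ref{thm: torsion is perfect} is in place.
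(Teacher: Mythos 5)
Your argument for part \ref{G: case 1} has a genuine gap. You claim the upper bound $N(\phi^{k_j},\F_p)\le\dim_{\F_p}H_*(M;\F_p)$ follows from ``a Morse-type / rank inequality coming from the spectral sequence of the filtered complex,'' but rank inequalities of Morse--Floer type go the \emph{other} way: $H_*(M;\F_p)$ is a subquotient of the sum of local homologies, so they give $N\ge\dim_{\F_p}H_*(M;\F_p)$, and for a general Hamiltonian with Morse--Bott fixed components $N$ can far exceed $\dim H_*(M;\F_p)$. This upper bound is precisely what must be proven, and it is where the torsion hypothesis has to enter. Likewise your lower bound is attributed to ``the $\F_p$-Smith inequality applied to the $\Z/p$-action,'' but for $p\neq q$ there is no $\Z/p$-action on $M$; the only Smith-type input available is the filtered Floer inequality of Theorem \ref{thm: Smith}, which compares $\phi$ with $\phi^p$ through total bar-lengths, not through $\dim H_*(M)$ (the lower bound $N\ge\dim H_*$ comes from the barcode structure, Proposition \ref{prop: barcodes}, not from Smith theory). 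The mechanism actually needed, and used in the paper, is growth plus boundedness: Theorem \ref{thm: Smith} gives $\beta_{\mrm{tot}}(\phi^{jp^k},\F_p)\ge p^k\,\beta_{\mrm{tot}}(\phi^j,\F_p)$, while $\phi^{jp^k}$ ranges over the finite set $\{\phi^1,\dots,\phi^{q-1}\}$, so all total bar-lengths vanish; by Proposition \ref{prop: barcodes} the absence of finite bars then yields $N=\dim_{\F_p}H_*(M;\F_p)$ and $\Spec^{ess}=\Spec^{vis}$. This pigeonhole step is absent from your proposal, and nothing you wrote replaces it.

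Your plan for part \ref{G: case 3} --- a ``mod-$q$ localization'' in filtered Floer theory, which you yourself flag as the main obstacle --- is also not available with the stated tools: Theorem \ref{thm: Smith} requires $\fix(\phi^p)$ to consist of finitely many generalized fixed points, which fails for $p=q$ since $\phi^q=\id$. The paper's proof of \ref{G: case 3} is elementary instead: Proposition \ref{prop: Pozniak} identifies $HF^{\loc}(\phi^k,\cF)\cong H(\cF;\F_q)$, \emph{classical} Smith theory for the genuine $\Z/q$-action on $M$ gives $\sum\dim_{\F_q}H(\cF;\F_q)\le\dim_{\F_q}H(M;\F_q)$, and Proposition \ref{prop: barcodes} gives the reverse bound $N\ge\dim_{\F_q}H(M;\F_q)$; equality kills the finite bars, gives $\Spec^{vis}(H;\bK)=\Spec(H)$, and the iteration formula follows from homogeneity of the action. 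Similarly, for the $\Q$-statement in part \ref{G: case 2} you assert that ``mean-index constancy plus weak non-degeneracy pins down which values survive as starting points of infinite bars,'' but constancy of the mean index does not prevent the local Floer homology of a component from vanishing for some iterate; the paper rules this out via the local Smith inequality (Proposition \ref{thm: Smith loc}) along primes whose powers generate $(\Z/q\Z)^{*}$. So while you correctly invoke Theorem \ref{thm: torsion is perfect} and the Morse--Bott computation of local Floer homology, the two load-bearing steps --- killing the finite bars for $p\neq q$ and handling $p=q$ --- are respectively incorrect and missing.
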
  

We briefly explain the approach used to prove part \ref{G: case 1} of Theorem \ref{thm: torsion is PR}. Following the main theme of the proof of Theorem \ref{thm: torsion is perfect}, we use information about large iterations of $H$ to study the periodic Hamiltonian diffeomorphism $\phi = \phi^1_H$ that it generates. More precisely, let $\psi = \phi^k,$ for $k$ coprime to $q.$ Combining the theory of barcodes of Hamiltonian diffeomorphisms (see Proposition \ref{prop: barcodes}), and Smith-type inqualities in filtered Floer homology (see Theorem \ref{thm: Smith}), we observe that for the {\em bar-lengths} \[\beta_1(\psi,\F_p) \leq \ldots \leq \beta_{K(\psi,\F_p)}(\psi,\F_p)\] of $\psi,$ we have the following inequality. Set \[\beta_{\mrm{tot}}(\psi,\F_p) = \beta_1(\psi,\F_p) + \ldots + \beta_{K(\psi,\F_p)}(\psi,\F_p)\] to be the {\em total bar-length} of $\psi.$ Then \[ \beta_{\mrm{tot}}(\psi^{p^m},\F_p) \geq p^m \cdot \beta_{\mrm{tot}}(\psi,\F_p).\] However, $\beta_{\mrm{tot}}(\psi^{p^m},\F_p)$ is bounded, since it can take at most $q-1$ values. This implies that \[\beta_{\mrm{tot}}(\psi,\F_p) = 0\] which in turn, implies part \ref{G: case 1}, by the theory of barcodes (see Proposition \ref{prop: barcodes}).

\begin{rmk}
We separate part \ref{G: case 3} of Theorem \ref{thm: torsion is PR} because it requires a different proof, relying on Proposition \ref{prop: Pozniak} below. The first statement of part \ref{G: case 3} is obtained via Proposition \ref{prop: Pozniak} by classical Smith theory combined with classical homological Arnol'd conjecture (outlined in \cite[Remark 7.1]{ChiangKessler-ruled} with details for $p=2$). One could also obtain this statement by a suitable generalization of Theorem \ref{thm: Smith} on Smith theory in filtered Floer homology, which is however out of the scope of this paper. 
\end{rmk}

The following statement is a key component of part \ref{G: case 3} of the proof of Theorem \ref{thm: torsion is PR}. It relies on the generalization of the Morse-Bott theory of Pozniak \cite[Theorem 3.4.11]{Pozniak} to the situation with signs and orientations, as in for example \cite[Chapter 9]{Schmaschke}, \cite[Chapter 8]{FO3:book-vol12}, or \cite{WehrheimWoodward-orientations}. However it is not entirely straightforward, because as classical examples show, it is false in the general Floer-Morse-Bott situation, while in our case it holds because of the existence of special $\om$-compatible almost complex structures adapted to the situation. 

\begin{prop}\label{prop: Pozniak}
Let $(M,\om)$ be a closed symplectic manifold, and $\phi \in \Ham(M,\om)$ a Hamiltonian diffeomorphism of finite order $d \geq 2.$ Let $\cl{F}$ be a path-connected component of the fixed-point set of $\phi.$ Finally, let $R$ be a commutative unital ring. Then the local Floer homology of $\phi$ at $\cl{F}$ with coefficients in $R$ satisfies: \[ HF^{\loc}(\phi, \cl{F}) \cong H(\cl{F}; R).\] 
\end{prop}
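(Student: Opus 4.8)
The plan is to use the finite order of $\phi$ to put the local Floer problem at $\cl F$ into a product-like normal form in which the ``vertical'' contributions to the differential vanish, leaving only a Morse complex of $\cl F$.

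\emph{Invariant data and local model.} Averaging an arbitrary $\om$-compatible almost complex structure over the finite cyclic group $\langle\phi\rangle$ produces a $\phi$-invariant $\om$-compatible $J$, and hence a $\phi$-invariant Riemannian metric $g=\om(\cdot,J\cdot)$. Since $\cl F$ is a path-component of $\fix(\phi)=\fix(\langle\phi\rangle)$ and $\langle\phi\rangle$ acts by $J$-holomorphic isometries, $\cl F$ is a closed $J$-complex — in particular symplectic — submanifold of $M$, and along $\cl F$ there is a $\phi$-invariant symplectic splitting $TM|_{\cl F}=T\cl F\oplus N$ on which $\phi$ acts fiberwise by a finite-order symplectic bundle map $A$ with $1\notin\Spec(A_x)$ for all $x\in\cl F$. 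Using the ($\phi$-equivariant) $g$-exponential map and an equivariant symplectic neighborhood theorem, identify a neighborhood $U$ of $\cl F$ with a disk subbundle of $N$ on which $\phi$ acts as $A$, and which along the zero section is the symplectic product of $(\cl F,\om|_{\cl F})$ with the standard fiber $(\R^{2k},\om_0)$, with $J$ the corresponding split structure.

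\emph{Generators.} As $A_x$ has finite order and no eigenvalue $1$, it is symplectically conjugate to a sum of planar rotations by rational angles $\theta_j\in\Q\setminus\Z$ (with locally, hence globally, constant multiplicities along $\cl F$); choosing a compatible invariant splitting of $N$ into the corresponding subbundles $N_j$, the fiberwise-quadratic ``resonant'' Hamiltonian $Q=\tfrac12\sum_j\theta_j\,\|\cdot\|^2_{N_j}$ satisfies $\phi=\phi^1_Q$ as a germ along $\cl F$. Only this germ enters $HF^{\loc}(\phi,\cl F)$, so the local complex may be computed from $H=Q+\eps\,\chi\,\pi^*f$, with $\pi\colon N\to\cl F$ the projection, $f$ a Morse function on $\cl F$, $\chi$ a cutoff supported in $U$, and $0<\eps\ll1$. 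The capped $1$-periodic orbits of $H$ in $U$ are then exactly the critical points of $f$, all on the zero section, each nondegenerate in the normal direction precisely because $1\notin\Spec(A_x)$; their $\Z/2$-grading (and, for a capped lift $\ol\fF$, their $\Z$-grading) differs from the Morse index of $f$ by a constant. Hence $CF^{\loc}(\phi,\cl F)$ has the same generators as $CM_*(f;R)\cong C_*(\cl F;R)$, up to an overall shift.

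\emph{Differential.} In the decoupled model $H=Q$ with $J$ split, the only fiber $1$-periodic orbit is $v\equiv0$, all of whose cappings agree, so any Floer cylinder asymptotic to it at both ends has zero energy and is constant. Turning on the small perturbation $\eps f$, the bundle curvature, and a nearly-split invariant $J$, a compactness argument confined to the isolating neighborhood $U$ shows that for $\eps$ small every small-energy Floer trajectory between the orbits above has identically vanishing normal component; the base component then satisfies a downward $\eps f$-gradient Floer equation and, by the usual Floer-to-Morse degeneration, is $t$-independent and is a negative gradient trajectory of $f$. Running this with coherent orientations — via the oriented refinement of the Morse–Bott theory of \cite[Theorem 3.4.11]{Pozniak} developed in \cite[Chapter 9]{Schmaschke}, \cite[Chapter 8]{FO3:book-vol12}, \cite{WehrheimWoodward-orientations} — one identifies these moduli, with signs, with the Morse moduli of $f$, so that the local Floer differential equals the Morse differential. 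Therefore $HF^{\loc}(\phi,\cl F)\cong HM_*(f;R)\cong H(\cl F;R)$ as $\Z/2$-graded $R$-modules, the $\Z$-graded refinement following from the constant grading shift.

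\emph{Main obstacle.} The essential point is ruling out non-constant small-energy Floer cylinders with both ends asymptotic to $\cl F$, which is exactly the mechanism by which the isomorphism fails in general Floer–Morse–Bott situations; this is where the $\phi$-invariant split $J$ and the resonant fiberwise-quadratic form of $H$ near $\cl F$ are indispensable, reducing the normal component to a linear Floer problem whose asymptotic operators are invertible (once more because $1\notin\Spec(A_x)$) and which therefore has only the trivial small-energy solution. A secondary point is to carry the argument through with signs, so as to obtain the conclusion over an arbitrary commutative unital ring $R$ rather than over $\F_2$; for this one uses the coherent-orientation formalism cited above.
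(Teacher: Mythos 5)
Your route is genuinely different from the paper's: instead of quoting the clean-intersection (Floer--Morse--Bott) machinery directly, you build an equivariant normal form around $\cl{F}$, realize $\phi$ locally as a resonant fiberwise rotation, perturb by a Morse function on the base, and try to decouple the Floer equation so that the local complex becomes the Morse complex of $f$. That is a legitimate strategy in outline, but as written it misses the point on which the whole proposition turns.

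The genuine gap is the orientation local system. Over $\F_2$ the conclusion is already Pozniak's theorem and needs none of your special structure; the content of the statement for a general ring $R$ is that the signed count works out untwisted. The oriented refinements you cite (Schmaschke, FOOO, Wehrheim--Woodward) do \emph{not} give $H(\cl{F};\Z)$ in general: they give $H(\cl{F};\cl{O}\times_{\pm 1}\Z)$ for an orientation double cover $\cl{O}\to\cl{F}$ built from the determinant lines of the asymptotic Cauchy--Riemann operators in the normal directions, and classical examples show $\cl{O}$ can be nontrivial -- this is exactly why the paper says the statement is false for general Floer--Morse--Bott fixed sets. In your scheme the twist enters precisely at the step ``one identifies these moduli, with signs, with the Morse moduli of $f$'': comparing coherent Floer orientations with Morse orientations along a loop in $\cl{F}$ can produce a monodromy sign, and invoking the coherent-orientation formalism does not make it vanish. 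The paper's actual proof is devoted to this point: using the $\phi$-invariant $J$, the linearizations $D(\phi)_x$ are unitary, so the relevant operators deform canonically to complex-linear Cauchy--Riemann operators, whose determinant lines are canonically oriented, whence $\cl{O}$ is trivial. Your setup (invariant split $J$, complex-linear normal data) contains the ingredients for this argument, but calling the sign issue ``secondary'' and outsourcing it to a citation leaves the crux unproved.

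Two further steps are asserted rather than established. First, the claim that $\phi$ agrees with $\phi^1_Q$ as a germ along $\cl{F}$: the equivariant neighbourhood theorem makes $\phi$ linear on a disk subbundle of $N$, but the symplectic form there is a minimal-coupling form (not split when $N$ is curved), and one must either arrange invariance under the whole one-parameter family of fiberwise rotations or justify a deformation of $\phi$ to the model without changing the local Floer homology. Second, the statement that every small-energy Floer cylinder has identically vanishing normal component is the analytic core of any Morse--Bott comparison; with the curvature coupling present it requires a genuine estimate (or an adiabatic/implicit-function-theorem argument), not just invertibility of the asymptotic operators. The paper sidesteps both issues by quoting the established clean-intersection results and concentrating on the orientation question, which is where you would need to do the real work as well.
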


We also note that the proof of Theorem \ref{thm: torsion is PR} has the following by-product, which is a new analogue, for Hamiltonian torsion, of the classical consequence of Floer theory, whereby the map $\pi_1(\Ham(M,\om)) \to \pi_1(M)$ is trivial.  

\begin{prop}
Let $(M,\om)$ be a closed monotone symplectic manifold, and $\phi$ in $\Ham(M,\om)$ be a Hamiltonian diffeomorphism of prime order. Then {\em all} the fixed points of $\phi$ are contractible.
\end{prop}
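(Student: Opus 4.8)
The plan is to run a ``dimension squeeze'' over $\F_q$ that is essentially the $p=q$ instance of the mechanism behind Theorem~\ref{thm: torsion is PR}. Let $\phi\ne\id$ have prime order $q$. By Theorem~\ref{thm: torsion is perfect}, $\phi$ is a weakly non-degenerate generalized perfect Hamiltonian diffeomorphism which is moreover Floer--Morse--Bott with generalized fixed points symplectic submanifolds; in particular $\fix(\phi)$ is a disjoint union of finitely many closed symplectic submanifolds $\cl{F}_1,\dots,\cl{F}_r$, which are exactly its path-connected components. The homotopy class $\al(x,\phi)\in\pi_1(M)$ depends continuously on $x\in\fix(\phi)$ and takes values in the discrete set $\pi_1(M)$, hence is constant on each $\cl{F}_i$; so every component is either entirely contractible or entirely non-contractible, and I write $I^{\mathrm c}$, resp.\ $I^{\mathrm{nc}}$, for the corresponding index sets. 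The goal is to show $I^{\mathrm{nc}}=\emptyset$.

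I would then assemble three ingredients over $\F_q$; write $h=\dim_{\F_q}H_*(M;\F_q)$ and $\ell_i=\dim_{\F_q}HF^{\loc}(\phi,\cl{F}_i)$. (a) Proposition~\ref{prop: Pozniak} with $R=\F_q$ gives $HF^{\loc}(\phi,\cl{F}_i)\cong H(\cl{F}_i;\F_q)$, so $\sum_{i=1}^{r}\ell_i=\dim_{\F_q}H_*(\fix(\phi);\F_q)$. (b) The cyclic group $\langle\phi\rangle\cong\Z/q$ acts smoothly on the closed manifold $M$ with fixed-point set $\fix(\phi)$, so the classical Smith inequality for $\Z/q$-actions gives $\dim_{\F_q}H_*(\fix(\phi);\F_q)\le h$. (c) For the contractible components, the action-filtration spectral sequence of a Hamiltonian $H$ generating $\phi$ has first page $\bigoplus_{i\in I^{\mathrm c}}HF^{\loc}(\phi,\cl{F}_i)$ and converges to $HF(H;\F_q)$, whose total dimension is $h$ because $M$ is monotone; hence $\sum_{i\in I^{\mathrm c}}\ell_i\ge h$. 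This last point is precisely the homological minimality recorded in part~\ref{G: case 3} of Theorem~\ref{thm: torsion is PR}. Combining (a), (b), (c) with the trivial bound $\sum_{i\in I^{\mathrm c}}\ell_i\le\sum_{i=1}^{r}\ell_i$,
\[
h\ \le\ \sum_{i\in I^{\mathrm c}}\ell_i\ \le\ \sum_{i=1}^{r}\ell_i\ =\ \dim_{\F_q}H_*(\fix(\phi);\F_q)\ \le\ h,
\]
so every inequality is an equality and $\sum_{i\in I^{\mathrm{nc}}}\ell_i=0$.

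To conclude, each component $\cl{F}_i$ with $i\in I^{\mathrm{nc}}$ is a nonempty closed manifold, so $\dim_{\F_q}H_*(\cl{F}_i;\F_q)\ge 1$, and Proposition~\ref{prop: Pozniak} then forces $\ell_i\ge 1$; this contradicts the vanishing of the sum over $I^{\mathrm{nc}}$ unless $I^{\mathrm{nc}}=\emptyset$, i.e.\ every fixed point of $\phi$ is contractible. I expect the only genuinely delicate point to be ingredient (c): one must know that the Morse--Bott Hamiltonian Floer homology of the contractible $1$-periodic orbits is well-defined and has total dimension $h$ in spite of the large degeneracy of $\fix(\phi)$, which is exactly where the $\phi$-invariant $\om$-compatible almost complex structure from Theorem~\ref{thm: torsion is perfect} and the sign-refined Pozniak isomorphism of Proposition~\ref{prop: Pozniak} do the real work; the remaining steps --- the locally constant homotopy class, the classical Smith inequality, and the final squeeze --- are routine. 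This is precisely why the statement falls out as a by-product of the proof of Theorem~\ref{thm: torsion is PR}.
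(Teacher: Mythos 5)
Your proof is correct and is essentially the paper's own argument: the statement is established there as a remark inside the proof of part \ref{G: case 3} of Theorem \ref{thm: torsion is PR}, by combining Proposition \ref{prop: Pozniak}, the Floer-theoretic lower bound $N(\phi,\F_q)\geq\dim_{\F_q}H_*(M;\F_q)$ from Proposition \ref{prop: barcodes}, and the classical Smith inequality over all (not just contractible) fixed components, exactly as in your squeeze. Your citation of the homological minimality of part \ref{G: case 3} is unnecessary (and could look circular), but harmless, since the inequality you actually use comes from the barcode/spectral-sequence bound.
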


The second step in the argument proving Theorem \ref{thm: St uniruled} is the following statement. It essentially follows the arguments of \cite{S-PRQSR} and \cite{SSW}.

\begin{thm}\label{thm: Morse-Bott PR uniruled}
	Let $(M,\om)$ be a closed monotone symplectic manifold that admits a weakly non-degenerate generalized $\F_p$ pseudo-rotation for a prime $p \geq 2.$ Then the $p$-th quantum Steenrod power of the cohomology class $\mu \in H^{2n}(M; \bF_p)$ Poincar\'{e} dual to the point class is deformed.
\end{thm}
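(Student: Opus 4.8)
The plan is to adapt the argument of \cite{S-PRQSR, SSW}, which establishes that a Hamiltonian pseudo-rotation (in the non-degenerate, one-dimensional-fixed-point sense) on a closed monotone symplectic manifold forces the $p$-th quantum Steenrod power $Q\Sigma_\mu$ of the point class $\mu \in H^{2n}(M;\bF_p)$ to be deformed, i.e.\ to differ from the classical Steenrod power. The key mechanism is that deformedness of $Q\Sigma_\mu$ is equivalent to the equivariant PSS / quantum Steenrod module structure on $\bF_p[t]$-equivariant filtered Floer homology being \emph{non-split} across action windows, and this non-splitness can be detected from the structure of the barcode of $\phi$ and its $p$-th iterate. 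Concretely, I would first recall from \cite{SSW} the equivariant PSS isomorphism and the fact that if $Q\Sigma_\mu$ were \emph{not} deformed, then the $\bF_p[t]$-module structure on the ($S^1$-equivariant, or $\Z/p$-equivariant) filtered Floer homology of every Hamiltonian would be compatible with the action filtration in a way that produces, upon passing to the $p$-th iterate, infinitely many long bars — contradicting homological minimality. So the strategy is contrapositive: assume $Q\Sigma_\mu$ is classical, and derive that $\phi$ cannot be a weakly non-degenerate generalized $\bF_p$ pseudo-rotation.

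The main new point relative to \cite{S-PRQSR, SSW} is that here the fixed-point set is not a finite set of points but a finite union of path-connected symplectic submanifolds $\cl{F}$ (by Theorem \ref{thm: torsion is perfect}), and the local Floer homology $HF^{\loc}(\phi,\cl F)$ is $H(\cl F;\bF_p)$ by Proposition \ref{prop: Pozniak}, rather than a one-dimensional space. Thus the first step is to set up the generalized Morse-Bott counterpart of the equivariant PSS map and the quantum Steenrod action in this setting: one uses the $\phi$-invariant $\om$-compatible almost complex structure from Theorem \ref{thm: torsion is perfect} so that the relevant moduli spaces of spheres and half-cylinders are cut out cleanly, and the generalized orbits $\mf F$ behave like Morse-Bott families. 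The total bar-length $\beta_{\mrm{tot}}(\phi^{k},\bF_p)$ then plays the role of the "defect from homological minimality"; by the pseudo-rotation hypothesis it vanishes for all $k$ in the sequence $k_j$. The equivariant count, however, feeds the quantum Steenrod power: if $Q\Sigma_\mu$ is \emph{not} deformed, then the $\Z/p$-equivariant continuation/iteration maps respect the filtration well enough that one can produce a bar of controlled positive length in $\cB(\phi^{p^m})$ (coming from the difference between the equivariant spectral invariant of the point and $p$ times the non-equivariant one), and this contradicts $\beta_{\mrm{tot}}(\phi^{p^m},\bF_p)=0$.

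In more detail, the key steps in order are: (1) recall the equivariant quantum Steenrod power and its PSS-compatibility from \cite{SSW}, and the characterization of "deformed" as $Q\Sigma_\mu \neq \mathrm{St}(\mu)$ modulo the ideal generated by $t$; (2) build the generalized-Morse-Bott equivariant PSS and pair-of-pants maps for $\phi$ using the invariant $J$, checking transversality and that the output agrees, after forgetting the filtration, with the closed-string quantum Steenrod operation on $H^*(M;\bF_p)$; (3) translate "$Q\Sigma_\mu$ not deformed" into a filtration-splitting statement for the $\bF_p[t]$-module map induced by the $p$-fold iteration $\phi \rightsquigarrow \phi^p$; (4) show that this splitting, combined with the weak non-degeneracy (so that each $\cl F$ contributes a bounded local contribution and indices are constant along $\cl F$), forces $\beta_{\mrm{tot}}(\phi^{p^m},\bF_p) \geq c\,p^m$ for some $c>0$ unless the total bar-length already vanishes with a surplus that is incompatible with $N(\phi^{p^m},\bF_p)=\dim H_*(M;\bF_p)$; (5) conclude that the pseudo-rotation hypothesis forces $Q\Sigma_\mu$ to be deformed.

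The hard part will be step (2) — the generalized Morse-Bott set-up for the \emph{equivariant} quantum Steenrod operations. In the ordinary (non-equivariant, non-degenerate) pseudo-rotation arguments one works with isolated orbits and the PSS and pair-of-pants maps are standard; here one must combine three delicate features simultaneously: the $\Z/p$-equivariance (coming from the cyclic symmetry of the pair of pants, à la \cite{Wilkins, SSW}), the Morse-Bott degeneracy of the generalized periodic orbits (requiring a Pozniak-type local model with signs, as in Proposition \ref{prop: Pozniak}), and the fact that the almost complex structure must be the special $\phi$-invariant one to keep everything under control. Verifying that the resulting operation computes the same closed-string quantum Steenrod power $Q\Sigma_\mu$ — independent of these auxiliary choices — is where the technical weight of the proof lies; once that is in place, steps (3)--(5) are a filtration bookkeeping argument essentially parallel to \cite{S-PRQSR, SSW}, using the barcode formalism of Proposition \ref{prop: barcodes} and the Smith inequality of Theorem \ref{thm: Smith}.
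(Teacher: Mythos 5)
There is a genuine gap at the heart of your outline: you never actually supply the mechanism by which undeformedness of $QSt_p(\mu)$ contradicts the generalized pseudo-rotation hypothesis. Your steps (3)--(5) assert that if the Steenrod power were classical, the equivariant iteration maps would ``produce a bar of controlled positive length'' in $\cB(\phi^{p^m})$, contradicting $\beta_{\mrm{tot}}(\phi^{p^m},\F_p)=0$. But the output of the undeformedness hypothesis (via the $\Z/p$-equivariant pair-of-pants technology of \cite{S-PRQS,SSW}) is a statement about \emph{spectral invariants}, i.e.\ about starting points of \emph{infinite} bars: for a generalized $\F_p$ pseudo-rotation that is not $\F_p$ Steenrod uniruled one gets $c(\mu,\til{\phi}^p)\geq p\cdot c(\mu,\til{\phi})$ (Theorem \ref{thm: spec ineq}). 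Nothing in this produces finite bars, and no argument is given for your claimed lower bound $\beta_{\mrm{tot}}(\phi^{p^m},\F_p)\geq c\,p^m$; the total bar-length and the Smith inequality (Theorem \ref{thm: Smith}) belong to the proof of Theorem \ref{thm: torsion is PR}, not to this theorem. The actual contradiction comes from a Chance--McDuff-type action-versus-index argument which is entirely absent from your proposal: by pigeonhole there is a single generalized fixed point $\cl F$ whose capped orbit families $\ol{\mf G}_i$ carry $c(\mu,H^{(r_i)})$, $r_i=p^{k_i}$; writing $\ol{\mf G}_i=\ol{\mf G}^{(r_i)}\#A_i$, the spectral inequality forces $\om(A_i)\leq 0$, while weak non-degeneracy gives the strict support bound $\Delta(H^{(r_i)},\ol{\mf G}_i)\in(0,2n)$ for the carrier (Lemma \ref{lma:carrier_spectral_invariant_genralized-QH} together with \eqref{eq:supp_local_FH_generalized}), so the linear growth $r_i\Delta(H,\ol{\mf G})$ forces $c_1(A_i)>0$ for $r_i$ large --- contradicting positive monotonicity. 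Without this step (or a substitute), your outline does not close.

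A secondary point: your step (2), which you identify as the hard part, is largely unnecessary. The quantum Steenrod operation $QSt_p$ lives on $QH^*(M;\F_p)$ and is defined independently of $\phi$, so there is no need to construct a Morse--Bott equivariant quantum Steenrod theory adapted to the generalized fixed submanifolds. The only adaptation required is on the filtered Floer side --- action carriers, local Floer homology and the $\Z/p$-equivariant filtration control for generalized fixed points --- and this goes through essentially verbatim thanks to the crossing-energy bound (Lemma \ref{crossing_energy} and its branched-cover analogue), which is exactly how the paper justifies Theorem \ref{thm: spec ineq} in the generalized setting.
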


These two results immediately imply Theorem \ref{thm: St uniruled} and therefore, by a Gromov compactness argument, Theorem \ref{thm: torsion to geom uniruled}.


\subsubsection{Applications to actions of Lie groups and lattices}
To conclude the discussion of our first two sets of results, we discuss their implications to the question of existence of Hamiltonian actions of possibly disconnected Lie groups and lattices in Lie groups, on closed symplectic manifolds. 

A well-known result of Delzant \cite{Delzant} (see \cite{PolterovichRosen} for an alternative argument) implies that a connected semi-simple group can only act on a closed symplectic manifold if it is compact. A compact zero-dimensional Lie group is finite, whence Theorems \ref{thm: neg mon tors} and \ref{thm: torsion to geom uniruled} provide topological and geometrical obstructions to their action. The identity component $K_0$ of a compact Lie group $K$ of positive dimension is a compact connected Lie group of positive dimension, and as such admits a maximal torus $T \cong (S^1)^k$ of positive dimension whose conjugates cover the whole group. Therefore, the absence of Hamiltonian torsion, as in Theorems \ref{thm: Polterovich}, \ref{thm: neg mon tors}, \ref{thm: torsion to geom uniruled}, and \ref{thm: St uniruled}  implies that a non-trivial $K$-action yields a non-trivial $K_0$-action, since otherwise it would factor through $K/K_0$ which is finite. This in turn yields a non-trivial $T$-action and a fortiori a non-trivial $S^1$-action. A celebrated result of McDuff \cite{McDuff-uniruled} then shows that non-trivial $S^1$-actions imply uniruledness in the sense of $k$-point genus $0$ Gromov-Witten invariants and hence geometric uniruledness. 

\begin{cor}[McDuff]\label{thm: McDuff}
	Let $(M,\om)$ be a closed symplectic manifold that is not geometrically uniruled. Then each homomorphism $K_0 \to \Ham(M,\om)$ for a compact connected Lie group $K_0$ must be trivial.
\end{cor}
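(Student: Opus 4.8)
\emph{Proof proposal.} The plan is to argue by contradiction, reducing a hypothetical non-trivial Hamiltonian action of $K_0$ to a non-trivial Hamiltonian circle action and then invoking McDuff's uniruledness theorem. So suppose $\rho\colon K_0 \to \Ham(M,\om)$ is a non-trivial homomorphism. If $\dim K_0 = 0$ then the compact connected group $K_0$ is trivial and there is nothing to prove, so assume $\dim K_0 \geq 1$. Then $K_0$ admits a maximal torus $T \cong (S^1)^k$ with $k \geq 1$ (a positive-dimensional compact connected Lie group has positive rank, since $K_0$ is the union of the conjugates of any maximal torus). Because the conjugates of $T$ cover $K_0$, if $\rho|_T$ were trivial then $\rho$ would vanish on every conjugate $gTg^{-1}$ and hence on all of $K_0$, a contradiction; thus $\rho|_T\colon (S^1)^k \to \Ham(M,\om)$ is non-trivial.

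The next step is to extract an honest non-trivial circle action. The kernel $N := \ker(\rho|_T)$ is a proper closed subgroup of $T$, so its identity component $N_0$ is a proper subtorus; picking a primitive integral vector $v \in \Lie(T)$ with $v \notin \Lie(N_0)$ and setting $C := \exp(\R v) \subset T$ produces a circle subgroup $C \cong S^1$ with $C \cap N_0$ finite, hence $C \not\subset N_0$, and since $N/N_0$ is finite while $C$ is connected this forces $C \not\subset N$. Therefore $\rho|_C\colon S^1 \to \Ham(M,\om)$ is non-trivial; its kernel is a proper closed — hence finite — subgroup of $S^1$, so passing to the quotient $S^1/\ker(\rho|_C)\cong S^1$ yields a non-trivial, effective Hamiltonian circle action on $(M,\om)$.

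Finally I would invoke the theorem of McDuff \cite{McDuff-uniruled}: every closed symplectic manifold carrying a non-trivial Hamiltonian $S^1$-action is uniruled, i.e. some genus-zero $k$-point Gromov--Witten invariant with $k \geq 3$ involving the point class is non-zero. A standard Gromov compactness argument — exactly as recalled in the introduction — then shows that for every $\om$-compatible almost complex structure $J$ and every $p \in M$ there is a (possibly nodal) genus-zero $J$-holomorphic stable map of the relevant class through $p$, and an irreducible component through $p$ is a $J$-holomorphic sphere through $p$; hence $(M,\om)$ is geometrically uniruled for every such $J$. This contradicts the hypothesis that $(M,\om)$ fails to be geometrically uniruled for some $\om$-compatible $J$, so $\rho$ must have been trivial. (Combined with Theorems \ref{thm: Polterovich}, \ref{thm: neg mon tors}, \ref{thm: torsion to geom uniruled} and \ref{thm: St uniruled}, the same reduction through a maximal torus and a circle subgroup handles possibly disconnected compact Lie groups $K$, as explained above.)

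The mathematical heart of the statement is entirely carried by the cited theorem of McDuff together with the elementary implication \textit{uniruled} $\Rightarrow$ \textit{geometrically uniruled}; the only genuine work in the argument — and the place where a little care is needed — is the Lie-theoretic reduction, namely verifying that the conjugates of a maximal torus cover $K_0$ and that a proper closed subgroup of a torus misses a suitable circle subgroup, so that a non-trivial Hamiltonian $K_0$-action restricts to a non-trivial Hamiltonian $S^1$-action. I do not expect any serious obstacle beyond this bookkeeping.
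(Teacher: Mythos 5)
Your proposal is correct and follows essentially the same route as the paper: reduce a non-trivial $K_0$-action through a maximal torus to a non-trivial Hamiltonian $S^1$-action, then invoke McDuff's theorem that such actions force uniruledness, hence geometric uniruledness for every compatible $J$, contradicting the hypothesis. The extra Lie-theoretic bookkeeping you supply (choosing a circle subgroup not contained in the kernel of the restriction to the torus) just fills in the paper's phrase ``a fortiori a non-trivial $S^1$-action.''
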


Moreover, by a simple continuity argument, a non-trivial continuous $S^1$-action implies a non-trivial $\Z/p\Z$-action for each prime $p.$ Therefore Theorems \ref{thm: neg mon tors} and \ref{thm: torsion to geom uniruled} imply the above corollary of the result of McDuff for symplectically aspherical, symplectically Calabi-Yau, negative monotone, or monotone symplectic manifolds. However, Theorem \ref{thm: St uniruled} also implies that in this case if the manifold is monotone, it must be $\F_p$ Steenrod-uniruled for all primes $p.$ It is seen from examples due to Seidel and Wilkins \cite{SeidelWilkins} that there exist closed monotone symplectic manifolds that are uniruled in the sense of Gromov-Witten invariants, and yet not $\F_p$ Steenrod-uniruled for certain primes $p.$ 

We note that McDuff's theorem was proven by showing that certain loops of Hamiltonian diffeomorphisms in a blow-up of the manifold are non-trivial, and detectable by Seidel's representation \cite{seidelInvertibles}. It would be interesting to investigate the existence of non-trivial Hamiltonian loops associated to Hamiltonian diffeomorphisms of finite order. We note that for a Hamiltonian $H$ generating $\phi \in \Ham(M,\om)$ of order $d,$ the Hamiltonian $H^{(d)}$ generates a loop homotopic to $\{(\phi^t_H)^d\}.$ The non-contractibility of this loop is not obvious since for a rotation $\phi_{2\pi/3}$ of $S^2$ by angle $2\pi/3$ about the $z$-axis, the loop $\{ \phi_{t\cdot2\pi/3}^3\}$ is not contractible in $\Ham(S^2,\om_{st}),$ while the loop $\{ \phi_{- t\cdot 4\pi/3}^3\}$ is contractible therein, yet $\phi_{-4\pi/3} = \phi_{2\pi/3}.$


Finally we can argue, following the work of Polterovich \cite{Pol-notors} on the Hamiltonian Zimmer conjecture, that $SL(k,\Z)$ for $k\geq 2$ has no non-trivial Hamiltonian actions on symplectically aspherical, symplectically Calabi-Yau, negative monotone, or monotone and not geometrically uniruled closed symplectic manifolds. Indeed, it is well-known that $SL(k,\Z)$ for $k \geq 2$ is generated by elements of finite order. We remark, however, that the case of finite index subgroups of $SL(k,\Z),$ $k\geq 3,$ is much more difficult and seems to be currently out of reach of our methods.



\subsubsection{Metric properties}
Our third and final set of results studies the metric properties of Hamiltonian torsion diffeomorphisms, in cases that are not ruled out by our previous arguments, for example on $\C P^n.$

Recall that the \textit{spectral pseudo-norm} of a Hamiltonian $H \in \sm{S^1 \times M, \R}$ on a closed symplectic manifold $(M,\om)$ is defined in terms of Hamiltonian spectral invariants as \[ \gamma(H) = c([M],H) + c([M],\ol{H}),\] and the {\it spectral norm} of $\phi \in \Ham(M,\om)$ is set as \[\gamma(\phi) = \inf_{\phi_H^1 = \phi} \gamma(H).\] We refer to Section \ref{sec: prelims} for a more in-depth discussion of this interesting notion, remarking for now that this is a conjugation-invariant and non-degenerate norm on $\Ham(M,\om),$ yielding a bi-invariant metric \[d_{\gamma}(f,g) = \gamma(gf^{-1}).\] This was shown in large generality in \cite{Oh-specnorm,Viterbo-specGF,Schwarz:action-spectrum}.

Furthermore, whenever defined, $\gamma(\phi)$ provides a lower bound on the celebrated Hofer distance \cite{HoferMetric, Lalonde-McDuff-Energy} $d_{\rm{Hofer}}(\phi, \id),$ defined as \[d_{\rm{Hofer}}(\phi, \id) = \inf_{\phi_H^1 = \phi} \intoi \max_M H(t,-) - \min_M H(t,-) \; dt.\] 

Finally in \cite{BHS-spectrum, Kawamoto-C0, S-Zoll} it was shown in various degrees of generality that $\gamma(\phi)$ is bounded by the $C^0$-distance $d_{C^0}(\phi,\id)$ of $\phi$ to the identity, at least in a small $d_{C^0}$-neighborhood of the identity. 

These and numerous other recent results show that the spectral norm $\gamma$ is an important measure of a Hamiltonian diffeomorphism. Here, we provide lower bounds on $\gamma(\phi),$ under the assumption that $\phi$ is of finite order. Our first result is relatively general and quite straightforward, and follows essentially from the homogeneity of the action under iteration, however it underlines the fact that the finite order condition implies certain metric rigidity.  


\begin{thm}\label{thm: lower bound}
	Let $(M,\om)$ be a closed rational symplectic manifold, with rationality constant $\rho > 0,$ that is $\langle [\om], \pi_2(M)\rangle = \rho \cdot \Z.$  Let $\phi \in \Ham(M,\om)$ be a non-trivial Hamiltonian diffeomorphism of order $d,$ that is $\phi^d = \id.$ Then $\gamma(\phi) \geq \rho/d.$
\end{thm}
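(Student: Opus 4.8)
The plan is to prove the bound $\gamma(H) = c([M],H) + c([M],\overline H) \geq \rho/d$ for \emph{every} Hamiltonian $H$ generating $\phi$, and then pass to the infimum. The two ingredients are spectrality of the spectral invariant $c([M],\cdot)$, and the homogeneity of the Hamiltonian action functional under iteration.

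First I would reduce to a statement about the action spectrum $\Spec(H)$. By spectrality, $c([M],H) \in \Spec(H)$, and similarly $c([M],\overline H) \in \Spec(\overline H) = -\Spec(H)$, so that $-c([M],\overline H) \in \Spec(H)$; hence $\gamma(H) = c([M],H) - \bigl(-c([M],\overline H)\bigr)$ is a difference of two elements of $\Spec(H)$. Since $\phi \neq \id$, non-degeneracy of the spectral norm on $\Ham(M,\om)$ gives $\gamma(\phi) > 0$, and as $H$ generates $\phi$ we have $\gamma(H) \geq \gamma(\phi) > 0$. It therefore suffices to show that any two elements of $\Spec(H)$ differ by an element of $(\rho/d)\Z$: then $\gamma(H) \in (\rho/d)\Z \cap \R_{>0}$, so $\gamma(H) \geq \rho/d$, and $\gamma(\phi) = \inf_{\phi^1_H = \phi}\gamma(H) \geq \rho/d$.

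The remaining point is where iteration enters. Normalizing $H$ to be $1$-periodic in time, one has $\phi^1_{H^{(d)}} = \phi^d_H = (\phi^1_H)^d = \phi^d = \id$, so the iterated Hamiltonian $H^{(d)}$, with $H^{(d)}(t,x) = d\,H(dt,x)$, generates the loop $\{\phi^{dt}_H\}_{t\in[0,1]}$ of Hamiltonian diffeomorphisms based at the identity. For a Hamiltonian generating such a loop $\{g_t\}$ with $g_0 = g_1 = \id$, the $1$-periodic orbits are the curves $t \mapsto g_t(p)$, $p \in M$, and a short Stokes-type computation shows that, for cappings varying continuously with $p$, the value of the action functional is independent of $p$; hence any two critical values of $\cA_{H^{(d)}}$ differ by $\int_{S^2}\om$ over a sphere class, i.e.\ by an element of $\langle[\om],\pi_2(M)\rangle = \rho\Z$, so $\Spec(H^{(d)})$ is contained in a single coset of $\rho\Z$. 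On the other hand, for any capped $1$-periodic orbit $\overline{x}$ of $H$, the $d$-fold iterate $\overline{x}^{(d)}$ is a capped $1$-periodic orbit of $H^{(d)}$ with $\cA_{H^{(d)}}(\overline{x}^{(d)}) = d\cdot\cA_H(\overline{x})$, by the homogeneity of the action functional under iteration. Consequently $d\cdot\Spec(H) \subseteq \Spec(H^{(d)})$ lies in a single coset of $\rho\Z$; equivalently, $\Spec(H)$ lies in a single coset of $(\rho/d)\Z$, which is precisely what was reduced to above.

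The argument is essentially routine, so I do not expect a serious obstacle; the one point deserving care is the claim that the action functional of a Hamiltonian generating a loop takes a single value modulo $\langle[\om],\pi_2(M)\rangle$ on its $1$-periodic orbits — equivalently, that $d\cdot\Spec(H)$ lies in a single coset of $\rho\Z$. This can be verified by the direct Stokes computation indicated above, or extracted from the standard description of the action spectrum of a Hamiltonian loop via the valuation of its Seidel element.
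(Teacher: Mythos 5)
Your proposal is correct and follows essentially the same route as the paper: both use that $H^{(d)}$ generates a Hamiltonian loop so that $\Spec(H^{(d)})$ is a single coset of $\rho\Z$, the homogeneity $\cA_{H^{(d)}}(\ol{x}^{(d)}) = d\,\cA_H(\ol{x})$ to place $\Spec(H)$ in a single coset of $(\rho/d)\Z$, spectrality plus non-degeneracy of $\gamma$ to conclude $\gamma(H)\in(\rho/d)\Z\cap\R_{>0}$, and then the infimum over $H$. The only cosmetic difference is that you invoke $\Spec(\ol H) = -\Spec(H)$ to view $\gamma(H)$ as a difference of two spectral values of $H$, whereas the paper handles $\ol H^{(d)}$ by a parallel direct computation; both are fine.
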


As a further consequence of Theorem \ref{thm: torsion is PR}, which requires considerably more complex methods, we obtain the following analogue of Newman's theorem for the spectral norm of Hamiltonian torsion elements.  

\begin{thm}\label{thm: spectral Newman}
Let $(M,\om)$ be a closed monotone symplectic manifold. Consider  a Hamiltonian diffeomorphism $\phi \in \Ham(M,\om)$ of order $d > 1.$ Then if the rationality constant of $\om$ is $\rho > 0,$ there exists $m \in \Z/d\Z$ such that \[ \gamma(\phi^m) \geq \rho/3.\] Here the coefficients are in an arbitrary field $\bK.$ In fact, if $d=p$ is prime, we prove the stronger statement, that there exists $m \in \Z/p\Z,$ such that \[ \gamma(\phi^m)  \geq \rho \cdot \lfloor p/2 \rfloor /p.\]
\end{thm}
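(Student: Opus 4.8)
The plan is to reduce to the case $d=p$ prime, use Theorem \ref{thm: torsion is PR} (together with Proposition \ref{prop: Pozniak}) to pin down the filtered Floer theory of \emph{all} iterates $\phi^{m}$ simultaneously, reinterpret the spectral norm as a ``largest gap'' function on the circle $\R/\rho\Z$, and finish by an elementary counting argument. For the reduction: given $\phi$ of order $d>1$, I would fix a prime $p\mid d$ and set $\psi=\phi^{d/p}$, of order exactly $p$. Since $\lfloor p/2\rfloor/p\ge 1/3$ for every prime $p$ (equality only at $p=3$), it suffices to prove for $\psi$ the sharp bound $\rho\lfloor p/2\rfloor/p$: an exponent $m'\in\Z/p\Z$ with $\gamma(\psi^{m'})\ge\rho\lfloor p/2\rfloor/p$ then yields $m=(d/p)m'\in\Z/d\Z$ with $\gamma(\phi^{m})=\gamma(\psi^{m'})\ge\rho/3$. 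So from now on $d=p$ is prime.

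\emph{Turning $\gamma$ into combinatorics.} Fix $H$ generating $\phi$ and a coefficient field $\bK$ (say $\bK=\Q$, via part (ii) of Theorem \ref{thm: torsion is PR}; part (iii) also allows $\bK=\F_p$). By Theorem \ref{thm: torsion is PR}, for every $m$ prime to $p$ the diffeomorphism $\phi^{m}$ is a weakly non-degenerate generalized $\bK$-pseudo-rotation, hence homologically minimal, so the barcode $\cB(\phi^{m})$ has no finite bars. Consequently the spectral invariants $c([M],\phi^{m})$ and $c([pt],\phi^{m})$ are starting points of infinite bars lying in one fundamental domain of the $\rho\Z$-action on $\R$, and
\[\gamma(\phi^{m})=c([M],\phi^{m})-c([pt],\phi^{m})=\rho-\mathfrak g\big(m\cdot P\big),\qquad P:=\overline{\Spec^{ess}(H;\bK)}\subset\R/\rho\Z,\]
where I used the scaling identity $\Spec^{ess}(H^{(m)};\bK)=m\cdot\Spec^{ess}(H;\bK)+\rho\Z$, and where $\mathfrak g(Q)$ is the length of the largest complementary arc of a finite set $Q\subset\R/\rho\Z$. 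Note $|P|\ge 2$, for otherwise $\gamma(\phi)=0$ and $\phi=\id$. The order-$p$ hypothesis now enters through the congruence $P-P\subseteq\tfrac{\rho}{p}\Z/\rho\Z$: iterating a capped generalized fixed component of $\phi$ $p$ times multiplies its action by $p$ and lands it in the fixed-point set $M$ of $\phi^{p}=\id$, on which — by Proposition \ref{prop: Pozniak} (nonzero local Floer homology), the obvious homological minimality of $\id$, and the constancy of the action functional on components of its critical set — the essential spectrum is a single coset of $\rho\Z$; hence $p(a-b)\in\rho\Z$ for all $a,b\in P$. Thus $P=c+\tfrac{\rho}{p}J$ for some $c\in\R/\rho\Z$ and $J\subseteq\Z/p\Z$ with $|J|\ge 2$.

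\emph{The counting step.} For $m$ prime to $p$ one has $m\cdot P=mc+\tfrac{\rho}{p}(mJ)$ and $\mathfrak g(m\cdot P)=\tfrac{\rho}{p}\big(g_{p}(mJ)+1\big)$, where $g_{p}(S)$ is the maximal number of consecutive residues of $\Z/p\Z$ absent from $S$. Pick distinct $j_{1},j_{2}\in J$; since $p$ is prime, $j_{2}-j_{1}$ is invertible mod $p$, so there is $m\in(\Z/p\Z)^{\times}$ with $m(j_{2}-j_{1})\equiv\lceil p/2\rceil\pmod p$. Then $mJ$ contains two residues whose two circular distances in $\Z/p\Z$ are $\lceil p/2\rceil$ and $\lfloor p/2\rfloor$, so $g_{p}(mJ)\le\lceil p/2\rceil-1$, whence
\[\gamma(\phi^{m})=\rho-\tfrac{\rho}{p}\big(g_{p}(mJ)+1\big)\ge\rho-\tfrac{\rho}{p}\lceil p/2\rceil=\rho\lfloor p/2\rfloor/p.\]
Together with the reduction step this proves the theorem.

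\emph{Main obstacle.} The routine part is the counting step; the substance is in the middle step, where I need — uniformly over all iterates — (a) the identity $\gamma(\phi^{m})=\rho-\mathfrak g\big(\overline{\Spec^{ess}(H^{(m)})}\big)$, which demands careful bookkeeping with the Novikov grading in the monotone case and with the discrepancy between $H^{(m)}$ and a Hamiltonian of minimal oscillation generating $\phi^{m}$, and (b) the congruence $P-P\subseteq\tfrac{\rho}{p}\Z/\rho\Z$, which relies on Proposition \ref{prop: Pozniak} and on understanding the essential spectrum of the loop $\phi^{p}=\id$. Theorem \ref{thm: lower bound}, already yielding $\gamma(\phi^{m})\ge\rho/p$, is a handy auxiliary estimate while establishing these.
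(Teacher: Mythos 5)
Your strategy is essentially the paper's own: the paper formalizes your largest-gap function as the spectral length $l(\phi)$ (Definition \ref{df:spectral_length}), proves $l(\phi)\le\gamma(\phi)$ (Lemma \ref{lma:spectral_length}) via the cluster structure of the essential spectrum, obtains the $\tfrac{\rho}{p}\Z$-congruence exactly as in the proof of Theorem \ref{thm: lower bound} (from $\Spec(H)\subset\Spec(H^{(p)})/p$ with $\Spec(H^{(p)})$ a single coset of $\rho\Z$, so your detour through Proposition \ref{prop: Pozniak} is unnecessary), uses the scaling $\Spec^{ess}(H^{(m)})=m\cdot\Spec^{ess}(H)+\rho\Z$ from Theorem \ref{thm: torsion is PR}, and makes the very same B\'ezout-type choice of $m$ sending a spectral difference to $\lceil p/2\rceil$ modulo $p$ (the paper does this with the pair realizing the fundamental length, you with an arbitrary pair of cluster values; the resulting estimate $\rho-\tfrac{\rho}{p}\lceil p/2\rceil=\rho\lfloor p/2\rfloor/p$ is identical). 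The one substantive caveat is that your middle display asserts equalities that are not available: $\gamma(\phi^{m})=c([M],\phi^{m})-c([pt],\phi^{m})$ is, via Poincar\'e duality for spectral invariants, only an inequality ``$\ge$'' in general (and the invariants are attached to a Hamiltonian or a lift, not to $\phi^{m}$ itself), while $\gamma(\phi^{m})=\rho-\mathfrak{g}(m\cdot P)$ amounts to $\gamma=l$, which the paper can prove only when $\gamma<\rho/2$ and explicitly flags as unclear otherwise. Fortunately your counting step only uses $\gamma(\phi^{m})\ge\rho-\mathfrak{g}(m\cdot P)$, which is exactly Lemma \ref{lma:spectral_length}; its proof, through Propositions \ref{prop:cluster_struct_I}--\ref{prop:cluster_struct_II} and Lemma \ref{lma:fundamental_length}, is the real content hidden in your ``careful bookkeeping,'' and it is supplied by the paper. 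The only place where you invoke the unproven direction is the claim that $|P|\ge2$ (``otherwise $\gamma(\phi)=0$''); repair it by the dichotomy: if $|P|=1$ and $\gamma(\phi)<\rho$, choose $H'$ generating $\phi$ with $\gamma(H')<\rho$, so $\gamma(H')=l(H',\Gamma_{H'})=0$ by Lemma \ref{lma:fundamental_length}, contradicting $\phi\neq\id$; and if $\gamma(\phi)\ge\rho$ the desired bound holds trivially. With these adjustments (equalities weakened to the needed inequalities) your argument is correct and coincides with the paper's proof in all essential points.
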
  

The key notion in the proof of this result is a new invariant of a Hamiltonian diffeomorphism $\phi \in \Ham(M,\om),$ which we call the {\em spectral length} $l(\phi,\bK)$ of $\phi$ with coefficients in a field $\bK.$ It is defined as the minimal diameter of $\Spec^{ess}(H;\bK) \cap I,$ for an interval $I =(a-\rho,a] \subset \R$ of length $\rho$ (with fixed $H$ as $I$ varies). In particular, we show that this minimum does not depend on the choice of Hamiltonian $H$ with $\phi^1_H = \phi.$ We show the key property that $l(\phi,\bK) \leq \gamma(\phi,\bK)$ and that in our case the spectral length behaves in a controlled way with respect to iterations. By a combinatorial analysis of our situation we consequently deduce Theorem \ref{thm: spectral Newman}.
We expect $l(\phi,\bK)$ to have additional applications in quantitative symplectic topology that we plan to investigate.

Theorem \ref{thm: spectral Newman} is generally speaking sharp, as can be seen from the rotation $\phi$ of $S^2$ by $2\pi/3$ about the $z$-axis. In this case $\phi^3 = \id$ and $\gamma(\phi) = \gamma(\phi^2) = \gamma(\phi^{-1}) = \rho/3,$ where $\rho$ is the area of the sphere. Observe moreover that the lower bound in Theorem \ref{thm: spectral Newman} does not depend on the order of $\phi.$ In particular if $d=2$ then Theorem \ref{thm: lower bound} gives the stronger lower bound $\gamma(\phi) \geq \rho/2,$ which is again sharp for the $\pi$-rotation of $S^2$ about the $z$-axis. We recall that Newman's theorem is the same assertion, but for the $C^0$-distance to the identity, in the setting of homeomorphisms of smooth manifolds. In contrast to our result, the constant in Newman's theorem is not explicit.

Finally, we remark that analogous statements hold for generalized $\F_p$ pseudo-rotations $\phi$ with sufficiently large admissible sequences. For example, for the sequence $k_j = p^{j-1},$ we get the lower bound $\gamma(\phi^{k_j}) \geq \rho/(p+1)$ for some $j \in \Z_{>0},$ which is saturated by the rotation of $S^2$ by $2\pi/(p+1)$ about the $z$-axis. For the sequence $k_j = j,$ we obtain the following lower bound, which is saturated by any $2\pi \theta$-rotation on $S^2$ about the $z$-axis, where $\theta \notin \Q.$ 

\begin{thm}\label{thm: PR}
	Let $\phi \in \Ham(M,\om)$ be a generalized $\bK$ pseudo-rotation with sequence $k_j = j$ on a closed monotone symplectic manifold $(M,\om)$ with rationality constant $\rho.$ Then \[\sup_{j \in \Z_{>0}} \gamma(\phi^{k_j}) \geq \rho/2,\] the coefficients being taken in $\bK.$
\end{thm}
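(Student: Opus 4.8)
We may assume $\phi\neq\id$. The plan is to reduce the statement to an elementary fact about the behaviour of the \emph{spectral length} under iteration, supplemented by one soft property of the spectral norm. Fix a Hamiltonian $H$ with $\phi^1_H=\phi$ and let $\cl F_1,\dots,\cl F_m$ be the generalized fixed points of $\phi$, with chosen cappings of actions $\alpha_1,\dots,\alpha_m$; write $\bar\alpha_i\in\R/\rho\Z$ for the reduction of $\alpha_i$ and $S:=\{\bar\alpha_1,\dots,\bar\alpha_m\}$. Since $\phi$ is a generalized $\bK$ pseudo-rotation with sequence $k_j=j$, for every $j\ge1$ the diffeomorphism $\phi^j$ is homologically minimal, so by the facts recalled after the definition of a generalized pseudo-rotation its barcode has no finite bars and $\Spec^{ess}(H^{(j)};\bK)=\Spec^{vis}(H^{(j)};\bK)$; moreover this set equals $\bigcup_i\bigl(j\alpha_i+\rho\Z\bigr)$, since the action of a capped generalized periodic orbit is multiplied by $j$ under the $j$-th iteration, while recapping shifts it by an element of $\rho\Z$. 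Hence $\Spec^{ess}(H^{(j)};\bK)\bmod\rho=jS$, and unwinding the definition of the spectral length, $l(\phi^j,\bK)=\ell(jS)$, where $\ell(T)$ is the length of the shortest closed arc of $\R/\rho\Z$ containing a finite set $T$. Because $l(\phi^j,\bK)\le\gamma(\phi^j)$ (coefficients in $\bK$ throughout), it suffices either to produce some $j$ with $\gamma(\phi^j)\ge\rho/2$ directly, or to show $\sup_{j\ge1}\ell(jS)\ge\rho/2$. To split into cases I would consider the subgroup $\Gamma\le\R/\rho\Z$ generated by the differences $\bar\delta_i:=\bar\alpha_i-\bar\alpha_1$, $i=1,\dots,m$.

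First, if $\Gamma$ is finite, of order $N$, then $N\bar\delta_i=0$ for all $i$, so $N\bar\alpha_i=N\bar\alpha_1$ and $NS$ is a single class; equivalently the Hamiltonian $H^{(N)}$, which generates $\phi^N$, has essential spectrum a single class modulo $\rho$. I would then invoke the standard fact that the set $D$ of differences $c(a,F)-c(b,F)\bmod\rho$ of spectral invariants ($a,b\in QH^*(M;\bK)\setminus\{0\}$) is independent of the Hamiltonian $F$ generating a given diffeomorphism; since $D$ vanishes for $F=H^{(N)}$, it vanishes for every $F$ generating $\phi^N$, whence $\gamma(F)=c([M],F)-c([pt],F)\in\rho\Z$ for all such $F$. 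As $\phi$ is a non-trivial pseudo-rotation it has infinite order, so $\phi^N\neq\id$, and non-degeneracy of the spectral norm gives $\gamma(F)>0$; hence $\gamma(F)\ge\rho$, and therefore $\gamma(\phi^N)=\inf_F\gamma(F)\ge\rho\ge\rho/2$. This case is complete.

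Second, if $\Gamma$ is infinite then, being finitely generated by the $\bar\delta_i$, it contains some $\bar\delta_{i_0}$ of infinite order — otherwise all generators would be torsion and $\Gamma$ would be finite. Thus $\bar\delta_{i_0}=\theta\rho\bmod\rho$ with $\theta$ irrational, so by Kronecker's theorem the positive multiples $j\bar\delta_{i_0}\bmod\rho$ lie dense in $\R/\rho\Z$ and their distances to the origin have supremum $\rho/2$. On the other hand, for each $j$ the set $jS$ contains the two points $j\bar\alpha_1$ and $j\bar\alpha_{i_0}=j\bar\alpha_1+j\bar\delta_{i_0}$, so $\ell(jS)$ is at least the circular distance between them, i.e. the distance from $j\bar\delta_{i_0}$ to $0$. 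Taking the supremum over $j$ gives $\sup_{j\ge1}\gamma(\phi^j)\ge\sup_{j\ge1}l(\phi^j,\bK)=\sup_{j\ge1}\ell(jS)\ge\rho/2$, as required; the supremum need not be attained, and is approached by the rotations of $S^2$ by $2\pi j\theta$ with $\theta$ irrational, in accordance with the asserted sharpness.

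The step I expect to be the real obstacle is not the combinatorics — the dichotomy on $\Gamma$, the bound $\gamma(\phi^N)\ge\rho$, and the Kronecker argument are all elementary — but the input $\Spec^{ess}(H^{(j)};\bK)=\bigcup_i(j\alpha_i+\rho\Z)$. This rests on the equality of essential and visible spectra for homologically minimal diffeomorphisms in the \emph{generalized}, Floer--Morse--Bott, setting (so that the barcode genuinely has no finite bars and every action value of a generalized periodic orbit with non-vanishing local Floer homology is the endpoint of an infinite bar), together with a careful check that cappings of generalized periodic orbits behave correctly under iteration. These are essentially the properties of the spectral length established in the parts of the paper preceding this statement; granting them, the argument above is the whole proof, and — exactly as for Theorem~\ref{thm: spectral Newman} — the combinatorial analysis is cleanest here because the admissible sequence $k_j=j$ is as large as possible.
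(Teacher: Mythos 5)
Your argument is correct and follows essentially the same route as the paper: the paper also splits according to whether all differences of points of $\Spec^{ess}(H)$ lie in $\rho\cdot\Q$ (Lemma \ref{lma:newman_for_PR}, which yields $\gamma(\phi^m)\geq\rho$ exactly as in your finite-$\Gamma$ case) or some difference is an irrational multiple of $\rho$, and in the latter case combines the identity $\Spec^{ess}(H^{(j)})=j\cdot\Spec^{ess}(H)+\rho\cdot\Z$ with density of the irrational rotation, the spectral length, and Lemma \ref{lma:spectral_length}, just as you do. One small correction: in your finite-$\Gamma$ case the equality $\gamma(F)=c([M],F)-c([pt],F)$ is not true in general (only $\geq$ holds, by the triangle inequality); the conclusion $\gamma(\phi^N)\geq\rho$ should instead be drawn from Lemma \ref{lma:fundamental_length} (if $\gamma(F)<\rho$ then $\gamma(F)=l(F,\Gamma_{F})=0$, contradicting non-degeneracy of $\gamma$ since $\phi^N\neq\id$), equivalently from the Poincar\'e-duality attainment $c(\beta,F)=-c([M],\ol{F})$ for some $\beta\neq 0$ used in its proof, which is precisely how Lemma \ref{lma:newman_for_PR} concludes.
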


This result is new in this generality even for strongly non-degenerate pseudo-rotations. In the special case where $(M,\om)$ is a complex projective space, this result can also be obtained in a different way by following the methods of \cite{GG-pseudorotations}.




%


\section{Preliminary material}\label{sec: prelims}

\subsection{Basic setup}

In this section, we recall established aspects of the theory of Hamiltonian diffeomorphisms on symplectic manifolds. Throughout the article, $(M,\om)$ denotes a $2n$-dimensional closed symplectic manifold.

\begin{df}[Monotone, negative monotone and symplectically Calabi-Yau]
	Suppose that the cohomology class of the symplectic form $\om$ is proportional to the first Chern class
	\begin{equation*}
	[\om] = \kappa\cdot c_1(TM),
	\end{equation*}
	for $\kappa\neq 0$, on the image $H^{S}_{2}(M;\Z)$ of the Hurewicz map $\pi_2(M) \to H_2(M;\Z).$ If $\kappa <0$ we call $(M,\om)$ \textit{negative monotone}, and if $\kappa >0$  we simply say \textit{monotone}. If the first Chern class $c_{1}(TM)$ vanishes on the image of the Hurewicz map, we say $(M,\om)$ is \textit{symplectically Calabi-Yau}.
\end{df}

The symplectic manifold $(M,\om)$ is called \textit{rational} whenever $\cP_{\om}=\brat{[\om],H^{S}_{2}(M;\Z)}$ is a discrete subgroup of $\R$. If $\cP_{\om}\neq\{0\}$, then $\cP_{\om}=\rho\cdot \Z$ for $\rho>0$, which we call the \textit{rationality constant} of $(M,\om)$. If $\cP_{\om}={0}$ we call $(M,\om)$ \textit{symplectically aspherical}. 

Finally we recall that the minimal Chern number of $(M,\om)$ is the index \[N = N_M = [\Z:\im(c_1(TM): \pi_2(M) \to \Z)].\]


\subsubsection{Hamiltonian isotopies and diffeomorphisms}
We consider normalized 1-periodic Hamiltonian functions $H\in\cl{H}\subset\sm{\rS^1 \times M, \R}$, where $\cl{H}$ is the space of Hamiltonians normalized so that $H(t,-)$ has zero $\om^n$ mean for all $t \in [0,1].$ For each $H\in\cl{H}$ we have the corresponding time-dependent vector field $X_H^t$ defined by the relation $\om(X_H^t,\cdot\,) = - dH_t$. In particular, to each Hamiltonian function we can associate a Hamiltonian isotopy $\{\phi_H^t\}$ induced by $X_H^t$ and its time-one map $\phi_H=\phi_H^1$. We omit the $H$ from this notation whenever it is clear from context. Such maps $\phi_H$ are called Hamiltonian diffeomorphisms and they form a group denoted by Ham$(M,\om)$. 

For a Hamiltonian diffeomorphism $\phi\in\Ham(M,\om)$, we denote the set of its \textit{contractible} fixed points by $\fix(\phi)$, and by $x^{(k)}$, for $x\in\fix(\phi)$, its image under the inclusion $\fix(\phi)\subset\fix(\phi^k)$. Contractible means the homotopy class $\al(x,\phi)$ of the path $\al(x,H)=\{\phi^{t}_H(x)\}$ for a Hamiltonian $H\in\cH$ generating $\phi$, is trivial. This class does not depend on the choice of Hamiltonian by a classical argument in Floer theory.

We denote by $H^{(k)}$ the $1$-periodic $k$-{th} iteration of a Hamiltonian function $H$, where $H^{(k)}(t,x)=kH(kt,x)$ and $\phi_{H^{(k)}}=\phi_H^k$. There is a bijective correspondence between $\fix(\phi_{H})$ and contractible $1$-periodic orbits of the isotopy $\{\phi^{t}_{H}\}$, thus for $x\in\fix(\phi_{H})$, we denote by $x(t)$ the $1$-periodic orbit given by $x(t)=\phi^{t}_{H}(x)$ and, similarly, by $x^{(k)}(t)$ the $1$-periodic orbit given by $x^{(k)}(t)=\phi^{t}_{H^{(k)}}(x^{(k)})$.


\subsubsection{The Hamiltonian action functional} Let $x:\rS^1\ra M$ be a contractible loop. It is then possible to extend this map to a \textit{capping} of $x$, namely, a map $\ol{x}:\mathrm{D}^2\ra M$ such that $\ol{x}|_{\rS^1}=x$. Let $\cL_{pt}M$ denote the space of contractible loops in $M$ and consider the equivalence relation on capped orbits given by
\begin{equation*}
(x,\ol{x}) \sim(y,\ol{y}) \iff x = y \quad\text{and}\quad \ol{x}\#(-\ol{y}) \in\ker[\om],
\end{equation*}
where $\ol{x}\#(-\ol{y})$ stands for gluing disks along their boundaries with the orientation of $\ol{y}$ reversed. The quotient space $\til\cL_{pt}M$ of capped orbits by the above equivalence relation  is a covering of $\cL_{pt}M$ with the group of deck transformations isomorphic to $\Gamma = \pi_2(M)/\ker[\om]$. We write $(x,\ol{x})$ or simply $\ol{x}$ for the equivalence class of the capped orbit. With this notation, to each $A\in\Gamma$ we associate the deck transformation sending a capped orbit $\ol{x}$ to $\ol{x}\#A$. We define the Hamiltonian action functional $\cA_H:\til\cL_{pt}M\ra\R$ of a $1$-periodic Hamiltonian $H$ by 
\begin{equation*}
\cA_H(\ol{x}) = \int_0^1 H(t,x(t))dt -\int_{\ol{x}}\om.
\end{equation*}
Observe that the critical points of the Hamiltonian action functional are exactly $(x,\ol{x})$ for $x$ a contractible 1-periodic orbit $\cO(H)$ satisfying the equation $x'(t) = X_H^t(x(t))$. We denote by $\til{\cO}(H)$ the set of critical points of $\cA_H$. The \textit{action spectrum} of $H$ is defined as $\spec(H)=\cA_{H}(\til{\cO}(H))$. We remark following \cite{Schwarz:action-spectrum} that in the monotone and negative monotone cases the action spectrum is a closed nowhere dense subset of $\R$. In addition, if $A\in\Gamma$ then 
\begin{equation*}
\cA_H(\ol{x}\#A)= \cA_H(\ol{x}) - \int_A\om,
\end{equation*}
and for $\ol{x}^{(k)}$, the $k-$th iteration of $\ol{x}$ with the naturally inherited capping, we have
\begin{equation*}
\cA_{H^{(k)}}(\ol{x}^{(k)}) = k\cA_H(\ol{x}).
\end{equation*}

\begin{df}[Non-degenerate and weakly non-degenerate orbits]
	A $1$-periodic orbit $x$ of $H$ is called \textit{non-degenerate} if $1$ is not an eigenvalue of the linearized time-one map $D(\phi^{1}_{H})_{x(0)}$. We call $x$ \textit{weakly non-degenerate} if there exists at least one eigenvalue of $D(\phi^{1}_{H})_{x(0)}$ different from $1$. A Hamiltonian $H$ is said \textit{non-degenerate} if all its $1$-periodic orbits are non-degenerate. 
\end{df}

The non-degeneracy of an orbit $x$ of $H$ is equivalent to \[graph(\phi_H)=\{(x,\phi_H( x))|\,x\in M\}\] intersecting the diagonal $\Delta_M\subset M\times M$ transversely at $(x(0),x(0))$. Following \cite{SalamonZehnder} for any Hamiltonian $H$ and $\eps>0$ there exists a non-degenerate Hamiltonian $H'$ satisfying $\|H-H'\|_{C^2} < \eps$. This fact is key in the definition of filtered Floer Homology of degenerate Hamiltonians and for local Floer homology.

\subsubsection{Mean-index and the Conley-Zehnder index}\label{subsubsec: mean-index}


Following \cite{SalamonZehnder,GG-negmon}, the \textit{mean-index} $\Delta(H,\ol{x})$ of a capped orbit $\ol{x}$ of a possibly degenerate Hamiltonian $H$ is a real number measuring the sum of the angles swept by the eigenvalues of $\{D(\phi^{t}_H)_{x(t)}\}$ lying on the unit circle. Here a trivialization induced by the capping is used in order to view $\{D(\phi^{t}_H)_{x(t)}\}$ as a path in $Sp(2n,\R)$. One can show that for the time-one map $\phi=\phi_H$ generated by the Hamiltonian $H$ the mean-index depends only on the class $\til{\phi}$ of $\{\phi_H^t\}_{t\in[0,1]}$ in the universal cover $\til{\Ham}(M,\om)$ making the notation $\Delta(\til{\phi},\ol{x})$ suitable. In addition, the mean-index depends continuously on both $\phi$ and the capped orbit and it behaves well with iterations, 
\begin{equation*}
\Delta(\til\phi^k,\ol{x}^{(k)}) = k\cdot\Delta(\til\phi,\ol{x}).
\end{equation*}
Meanwhile, the \textit{Conley-Zehnder index} CZ$(H,\ol{x})$ of a non-degenerate Hamiltonian is integer valued and roughly measures the winding number of the above mentioned eigenvalues. Once again, the index only depends on $\til{\phi}$ so we can also write CZ$(H,\ol{x})=$ CZ$(\til\phi,\ol{x})$. We shall use the same normalization as in \cite{GG-negmon}, namely, CZ$(H,\ol{x})= n$ if $x$ is a non-degenerate maximum of an autonomous Hamiltonian $H$ with small Hessian and $\ol{x}$ is the constant capping. We shall omit the $H$ or $\til\phi$ in the notation when it is clear from the context. We remark that for an element $A\in\Gamma$ 
\begin{equation*}
\Delta(\ol{x}\#A) = \Delta(\ol{x}) -2\brat{c_1(TM),A}\quad\text{and}\quad \text{CZ}(\ol{x}\#A) = \text{CZ}(\ol{x})-2\brat{c_1(TM),A}.
\end{equation*}
Also, in the case that $\ol{x}$ is non-degenerate we have
\begin{equation}\label{mi_inqty}
|\Delta(\ol{x})-\text{CZ}(\ol{x})| < n. 
\end{equation}
Following \cite{RobbinSalamonIndex,PolterovichRosen,EntovPolterovich-rigid} we observe that a version of the Conley-Zehnder index can be defined even in the case where the capped orbit is degenerate. It is called the Robbin-Salamon index and it coincides with the usual Conley-Zehnder index in the non-degenerate case. Furthermore we note that the mean-index can be equivalently defined by 
\begin{equation}\label{eq: limit CZ}
     \Delta(\til{\phi}_H,\ol{x}) = \lim_{k \to \infty} \frac{1}{k} \text{CZ}(\til{\phi}^{k}_{H}, \ol{x}^{(k)}),
\end{equation}
where we are slightly abusing notation in the sense that CZ here means the Robbin-Salamon index so as to include the degenerate case. The limit in \eqref{eq: limit CZ} exists, as the Robbin-Salamon index is a quasi-morphism $CZ: \til{Sp}(2n,\R) \to \R$ (see \cite[Section 3.3.4]{EntovPolterovich-rigid}, \cite{GGP-CZ}). In particular, as can also be seen directly from its definition in \cite{SalamonZehnder}, the mean-index is induced by a homogeneous quasi-morphism $\Delta: \til{Sp}(2n,\R) \to \R.$ Morevoer, this map is continuous, and satisfies the additivity property \[\Delta(\Phi \Psi) = \Delta(\Phi) + \Delta(\Psi)\] for all $\Phi \in \pi_1(Sp(2n,\R)) \subset \til{Sp}(2n,\R)$ and all $\Psi \in \til{Sp}(2n,\R).$


\subsection{Floer theory} 
Floer thoery was first introduced by A.Floer \cite{Floer1,Floer2,Floer3} as a generalization of the Morse-Novikov homology for the Hamiltonian action functional defined above. We refer to \cite{OhBookI} and references therein for details on the constructions described in this subsection, and to \cite{AbouzaidBook,SeidelMCG,Zap:Orient}, as well as to references therein, for a discussion of canonical orientations. 

\subsubsection{Filtered and total Floer Homology}
In this subsection we review the construction of \textit{filtered Floer homology} in order to recall some basic properties and set notation. 

Let $H$ be a non-degenerate $1$-periodic Hamiltonian on a rational symplectic manifold $(M,\om)$ and $\bK$ a fixed base field. For $a\in\R\setminus$Spec$(H)$ and $\{J_t \in \cJ(M,\om)\}_{t\in \rS^1}$ a generic loop of $\om$-compatible almost complex structures, denote 
\begin{equation*}
CF_k(H;J)^{<a}=\bigg\{\sum \la_{\ol{x}}\cdot\ol{x}\,\,\big|\,\,\ol{x}\in\til{\cO}(H),\,\text{CZ}(\ol{x})=k,\, \la_{\ol{x}}\in\bK,\,\cA_H(\ol{x}) < a\bigg\}, 
\end{equation*}
where $\#\{\la_{\ol{x}}\neq0\ |\,\cA_{H}(\ol{x})>c\}<\infty$ for every $c\in\R$, the vector space over $\bK$ generated by critical points of the Hamiltonian action functional of filtration level $<a$. The graded $\bK$-vector space $CF_*(H,J)^{<a}$ is endowed with the Floer differential $d_{H;J}$, which is defined as the signed count of isolated solutions (quotiented out by the $\R$-action) of the asymptotic boundary value problem on maps $u:\R\times\rS^1\ra M$ defined by the negative gradient of $\cA_H$ \cite{Salamon-lectures,SalamonZehnder}. In other words, the boundary operator counts the finite energy solutions to the Floer equation
\begin{equation*}
\frac{\del u}{\del s} + J_t(u)\frac{\del u}{\del t} + \nabla H_t(u) = 0\quad\text{s.t.}\quad E(u) = \int_\R\int_{\rS^1}\bigg\|\frac{\del u}{\del s}\bigg\|^2 dt\,ds<\infty
\end{equation*}
that converge to periodic orbits $x_-$ and $x_+$ as $s$ tends to $\pm\infty$ such that the capping $\ol{x}_-\#u$ is equivalent to $\ol{x}_+$ and $\text{CZ}(\ol{x}_-)-\text{CZ}(\ol{x}_+)=1$. In this case the Floer trajectory $u$ satisfies $E(u)=\cA_H(\ol{x}_-) - \cA_H(\ol{x}_+)$.
We thus obtain the \textit{filtered Floer chain complex} $(CF_*(H;J)^{<a}, d_{H;J})$ which is a sub-complex of the  \textit{total Floer chain complex} obtained in the same way by setting $a=+\infty$. Furthermore, for an interval $I=(a,b)$, $a<b$, $a,b\in\R\setminus$Spec$(H)$ we define the Floer complex in the action window $I$ as the quotient complex
\begin{equation*}
CF_*(H;J)^I = CF_*(H;J)^{<b}/CF_*(H;J)^{<a}.
\end{equation*}
The resulting homology of this complex $HF_*(H)^I$ is the \textit{Floer homology} of $H$ \textit{in the action window $I$} and it is independent of the generic choice of almost complex structure $J$. So the (total) Floer homology of $H$ can be obtained by setting $a=-\infty$ and $b=+\infty$. We note that $CF_*(H;J)$ is naturally a module over the Novikov field $\Lambda_{\bK} = \bK[[q^{-1},q]$ with $q$ a variable of degree $2N.$ Indeed we define $q^{-1} \cdot \ol{x} = \ol{x} \# A_0,$ for $A_0$ the generator of $\Gamma$ with $\langle c_1(TM), A_0 \rangle = N$ and extend it to a module structure in the natural way.

Observe that by interpolating between distinct Hamiltonians through generic families and writing the Floer continuation map, where the negative gradient depends on the $\R$-coordinate, one can show that $HF_*(H)$ does not depend on the Hamiltonian, and  $HF_*(H)^{I}$ depends only on the homotopy class of $\{\phi^{t}_H\}_{t\in[0,1]}$ in the universal cover $\til\Ham(M,\om)$ of the Hamiltonian group $\Ham(M,\om)$. Also, when $M$ is rational the above construction extends by a standard continuity argument to degenerate Hamiltonians. 

{\begin{rmk}\label{rmk: virtual_cycle}
Since we deal with negative monotone symplectic manifolds, it is important to emphasize that for our arguments to apply to this case in general, we must make use of the machinery of virtual cycles (see \cite{FukayaOno1,LiuTian,FO3:book-vol12} and references therein) to guarantee that the Floer differential is defined. In this case, the ground field $\bK$ should be of characteristic zero. However, if the minimal Chern number of $(M,\om)$ is $N \geq n-2,$ then $(M,\om)$ fits into the framework of semi-positive symplectic manifolds, and hence classical transversality techniques suffice (see \cite{HS-Novikov}). 
\end{rmk}}

\subsubsection{The irrational case}
In this paper we also consider the case in which the manifold $M$ is symplectically Calabi-Yau, which includes the possibility of it being irrational. In this case we have to work a little harder if $H$ is degenerate since the continuation argument above does not work as before, since non-spectral $a,b$ for $H$ do not necessarily remain non-spectral even for arbitrarily small perturbations $H_{1}$ of $H$. Moreover, the resulting homology groups depend on the choice of non-degenerate perturbation $H_{1}$. We shall follow \cite{Hein-CC} to work around this issue.

For a fixed Hamiltonian $H$ and action window $I=(a,b)$ with $a,b\in\R\setminus$Spec$(H)$, consider the set of non-degenerate perturbations $\til{H}$ whose action spectrums do not include $a$ and $b$ and $H\leq\til{H}$ (i.e. $H(t,x)\leq \til{H}(t,x)$ for all $x\in M$ and $t\in \rS^1$). Observe that $\leq$ induces a partial order in the set of perturbations. In addition, by considering a monotone decreasing homotopy $\til{H}^s$ from $\til{H}^0$ to $\til{H}^1$, one obtains an induced homomorphism between the Floer homology groups. These give rise to transition maps $HF_*({H'})^I\ra HF_*({H''})^I$ whenever $H''\leq H'$. Therefore, we can define the filtered Floer homology of $H$ by taking the direct limit \[HF_*(H)^I = \lim_{\longrightarrow} HF_*(H')^I\] over the homology groups of the perturbations satisfying the aforementioned conditions. We remark that in the case where $H$ is non-degenerate or $M$ is rational, this definition coincides with the usual filtered Floer homology groups.



\subsubsection{Local Floer homology}
In this section we shall follow \cite{GG-negmon} in order to briefly review the construction of the \textit{local Floer homology} of a Hamiltonian $H$ at a capping $\ol{x}$ of an isolated $1$-periodic orbit $x$.

Since $\ol{x}$ is isolated we can find an isolating neighbourhood $\cl{U}$ of $x$ in the extended phase-space $\rS^{1}\times M$ whose closure does not intersect the image $\{(t,y(t))\}_{t\in[0,1]}$ of any other orbit $y$ of $H$. For a $C^2$ small enough non-degenerate perturbation $H'$ the orbit $x$ splits into finitely many 1-periodic orbits $\cO(H',x)$ of $H'$ which are contained in $U$ and whose cappings are inherited from $\ol{x}$. We denote by $\cO(H',\ol{x})$ the capped $1$-periodic orbits $\ol{x}$ splits into. Moreover, we can also guarantee that any Floer trajectory and any broken trajectory, between capped orbits in $\cO(H',\ol{x})$ are contained in $U$. For a base field $\bK$ we consider the vector space $CF_*(H,\ol{x})$ generated by $\cO(H',\ol{x})$, which by the above observation naturally inherits a Floer differential and a grading by the Conley-Zehnder index. The homology of this chain complex is independent of the choice of the perturbation $H'$ once it is close enough and it is called the local Floer homology of $H$ at $\ol{x}$; it is denoted by $HF^{\loc}_*(H,\ol{x})$. This group depends only on the class $\til{\phi}$ of $\{\phi^t_H\}$ in the universal cover $\til{\Ham}(M,\om)$ and the capped orbit $\ol{x},$ in the sense that homotopic paths have choices of cappings of orbits corresponding to a fixed point $x \in \fix(\phi)$ in bijection, and the corresponding groups are canonically isomorphic. Hence we write  $HF^{\loc}_{\ast}(H,\ol{x}) = HF^{\loc}_{\ast}(\til{\phi},\ol{x}).$ If we ignore the $\Z$-grading, then the group depends only on $\phi = \phi^1_H$ and $x \in \fix(\phi).$ In this case, we write $HF^{\loc}(\phi,x)$ for the corresponding local homology group which is naturally only $\zt$-graded.



 Let $\ol{x}$ be a capped 1-periodic orbit of a Hamiltonian $H$. We define the \textit{support} of $HF^{\loc}_*(H,\ol{x})$ to be the collection of integers $k$ such that  $HF^{\loc}_k(H,\ol{x})\neq 0$. By the continuity of the mean-index and by equation (\ref{mi_inqty}) it follows that $HF^{\loc}_*(H,\ol{x})$ is supported in the interval $[\Delta(\ol{x})-n,\,\Delta(\ol{x})+n]$. One can show that if $x$ is weakly non-degenerate then it is actually supported in $(\Delta(\ol{x})-n,\,\Delta(\ol{x})+n)$. We shall explore the idea behind the proof of this second fact later as we use the same argument to prove a similar claim in slightly greater generality, namely, that of an isolated compact {path}-connected family of contractible fixed points. 

\subsection{Quantum homology and PSS isomorphism} 
In the present section we describe the quantum homology of a symplectic manifold. It might be helpful to think of it as the Hamiltoniana Floer homology in the case the Hamiltonian is given by a $C^2$-small time-independent Morse function. Alternatively, one may consider it as the cascade approach \cite{Frauenfelder} to Morse homology for the unperturbed symplectic area functional on the space $\til\cL_{pt}M$. For a more detailed inspection of these subjects we refer to \cite{LeclercqZapolsky,OhBookI,SeidelBook}.

\subsubsection{Quantum homology}
For a fixed ground field $\bK$ and the Novikov field $\Lambda_{\bK}=\bK[[q^{-1},q]$ of $(M,\om)$, where $\deg(q)=2N$, we set $QH(M)=QH(M,\bK)=H_*(M;\Lambda_{\bK})$ as a $\Lambda_{\bK}$-module. This module has the structure of a graded-commutative unital algebra over $\Lambda_{\bK}$ whose product, denoted by $*$, is defined in terms of 3-point genus 0 Gromov-Witten invatiants  \cite{Liu-assoc,McDuffSalamon-BIG,RuanTian-qh1,RuanTian-qh2,Witten-2d}. It can be thought of as a deformation of the usual intersection product on homology. As in the classical homology algebra, the unit for this quantum product is the fundamental class $[M]$ of $M$.

\subsubsection{Piunikhin-Salamon-Schwarz isomorphism}
Under our conventions for the Conley-Zehnder index, one obtains a map \begin{equation*}
PSS:QH_{*}(M)\ra HF_{*-n}(H),
\end{equation*}
by counting (for generic auxiliary data) isolated configurations of negative gradient trajectories $\gamma:(-\infty,0]\ra M$ incident at $\gamma(0)$ with the asymptotic $\displaystyle\lim_{s \to -\infty}u(s,\cdot)$ of a map $u:\R\times\rS^1\ra M$ of finite energy, satisfying the Floer equation
\begin{equation*}
\frac{\partial u}{\partial s} + J_t(u)\left(\frac{\partial u}{\partial t} - X_k^t(u)\right)=0,
\end{equation*}
for $(s,t)\in\R\times\rS^1$ and $K(s,t)\in C^\infty(M,\R)$ a small perturbation $\beta(s)H_t$, such that it coincides with $H$ for $s\ll-1$ and $s\gg+1$. Also, we ask that $\beta:\R\ra[0,1]$ is a smooth function satisfying $\beta(s)=0$ for $s\ll-1$ and $s\gg +1$. This map produces an isomorphism of $\Lambda_{\bK}$-modules, which intertwines the quantum product on $QH(M)$ with the pair of pants product on Hamiltonian Floer homology. This map is called the \textit{Piunikhin-Salamon-Schwarz isomorphism}. 


\subsection{Spectral invariants in Floer theory}\label{sec:action_carrier}
We review the theory of spectral invariants following the works of \cite{PolterovichRosen,GG-negmon,OhBookI} which contain a more exhaustive list of properties and finer details of the construction. 

Let $(M,\om)$ be a closed symplectic manifold, $H$ a generic Hamiltonian and $\{J_t\}_{t\in[0,1]}$ a loop of $\om$-compatible almost complex structures. For $a\in\R\setminus\Spec(H)$ the inclusion of the filtered Floer complex into the total complex induces a homorphism
\begin{equation*}
i_a:HF(H)^{<a}\ra HF(H).
\end{equation*}
Using the identification provided by the PSS isomorphism $QH_*(M)\cong HF_{*-n}(H)$ we then define
\begin{equation*}
c(\al_M,H) = \inf\{a\in\R\,|\,\,PSS(\al_M)\in \ima(i_a)\}.
\end{equation*}

From the definition one can see that the spectral invariants do not depend on the choice of almost complex structures. In addition, for $H\in\cH$ the spectral invariant $c(\al_M,H)$ depends only on the class $\til\phi_H$ of $\{\phi_H^t\}$ in the universal cover $\til\Ham(M,\om)$, consequently, we also denote $c(\al_M,\til\phi_H)=c(\al_M,H)$. 

\begin{df}[Non-Archimedean valuation]
Let $\Lambda$ be a field. A non-Archimedean valuation on $\Lambda$ is a function $\nu:\Lambda\ra\R\cup\{+\infty\}$, such that
\begin{enumerate}[label = (\roman*)]
    \item $\nu(x)=+\infty$ if and only if $x=0$,
    \item $\nu(xy)=\nu(x)+\nu(y)$ for all $x,y\in\Lambda$,
    \item $\nu(x+y)\geq\min\{\nu(x),\nu(y)\}$ for all $x,y\in\Lambda$.
\end{enumerate}
The Novikov field $\Lambda_{\bK}=\bK[[q^{-1},q]$ possesses a non-Archimedean valuation \\$\nu:\Lambda_{\bK}\ra\R\cup\{+\infty\}$ given by setting $\nu(0)=+\infty$ and \begin{equation}\label{eq:valuation}
   \displaystyle \nu(\sum a_{j}q^j)=-\max\{\,j\,|\,a_{j}\neq 0\}.
\end{equation} \end{df}
Spectral invariants enjoy a wealth of useful properties established by Schwarz \cite{Schwarz:action-spectrum}, Viterbo \cite{Viterbo-specGF}, Oh \cite{McDuffSalamon-BIG,Oh-construction,Oh-LecturesonFloer} and generalized by Usher \cite{Usher-spec,Usher-duality}, all of which hold for closed rational symplectic manifolds, using the machinery of virtual cycles as discussed in Remark \ref{rmk: virtual_cycle} if necessary. We summarize below some of the relevant properties for our purposes:
\begin{enumerate}[label = (\roman*)]
	\item\textit{spectrality}: For each $\al_M\in QH(M)\setminus\{0\}$ and $H\in\cH$,
	\begin{equation*}
	c(\al_M,\til\phi_H)\in\Spec(H).
	\end{equation*}
	\item\textit{stability}: For any $H,G\in\cH$ and $\al_M\in QH(M)\setminus\{0\}$,
	\begin{equation*}
	\int_0^1\min_M(H_t-G_t)dt\,\leq\, c(\al_M,\til\phi_H)-c(\al_M,\til\phi_G)\,\leq\,\int_0^1\max_M(H_t-G_t)dt.
	\end{equation*}
	\item\textit{triangle inequality:} For any $H,G\in\cH$ and $\al_M, \al_M'\in QH(M)\setminus\{0\}$,
	\begin{equation*}
	c(\al_M*\al_M',\til\phi_H\til\phi_G)\,\leq\,c(\al_M,\til\phi_H)+c(\al_M',\til\phi_G).
	\end{equation*}
	\item\textit{value at identity:} $c(\al_{M},\til{id})=-\rho\cdot\nu(\al_{M})$ for every $\al_{M}\in QH(M)\setminus \{0\}$, where $\rho$ is the rationality constant of $(M,\om)$ and $\nu$ is as in (\ref{eq:valuation}).
	\item\textit{Novikov action:} For all $H \in \cl{H},$ $\alpha_M \in QH(M),$ $\la \in \Lambda_{\bK},$ \[c(\la \alpha_M, H) = c(\alpha_M, H) - \rho \cdot \nu(\lambda).\]
	\item\textit{non-Archimedean property:} For all $\alpha_M, \alpha'_M \in QH(M),$ \[c(\alpha_M + \alpha'_M,H) \leq \max \{c(\alpha_M,H), c(\alpha'_M,H)\}.\]
\end{enumerate}
We remark that by the continuity property, the spectral invariants are defined for all $H\in\cH$ and all the above listed properties apply in this generality. Further, we observe that for $\al_M\in QH(M)$ satisfying $\al_M*\al_M=\al_M$ the triangle inequality for the spectral invariants implies
\begin{equation*}
c(\al_M,\til\phi_{H^{(k)}})=c(\al_M,\til\phi_H^k)\leq k\cdot c(\al_M,\til\phi_H).
\end{equation*}

\subsubsection{Spectral norm} For a Hamiltonian $H\in\cH$, we define its spectral pseudo-norm by
\begin{equation}
\gamma(H)= c([M],\til\phi_{H}) + c([M],\til\phi_{\ol{H}}),
\end{equation}
where $\ol{H}$ is the Hamiltonian function $\ol{H}(t,x)= -H(1-t,x)$. A result of \cite{Oh-specnorm,Schwarz:action-spectrum,Viterbo-specGF} shows that 
\begin{equation*}
\gamma(\phi) = \inf_{\phi_H^1 = \phi}\gamma(H),
\end{equation*}
defines a non-degenerate norm $\gamma:\Ham(M,\om)\ra\R_{\geq0}$ and yields a bi-invariant distance $\gamma(\phi,\phi')=\gamma(\phi'\phi^{-1})$. We call $\gamma(\phi)$ the \textit{spectral norm} of $\phi$ and $\gamma(\phi,\phi')$ the \textit{spectral distance} between $\phi$ and $\phi'$.

\subsubsection{Carrier of the spectral invariant} In this section we review the definition of \textit{carriers of the spectral invariant} mainly following \cite{GG-negmon} and references therein. We observe that while we are going to introduce the notion of carriers specifically for the fundamental class $[M]\in QH(M)$, it can be done so for any non trivial quantum homology class $\mu$.

First, we fix $\al_M=[M]$ and denote $c(H)=c(\til\phi_H)=c([M],\til\phi_H)$. Observe that in the case of a non-degenerate Hamiltonian $H$, we have 
\begin{equation*}
c(\til\phi_H) = \inf\{\cA_H(\sigma)\,|\,\sigma\in CF_n(H),\,PSS([M]) = [\sigma]\}, 
\end{equation*}
where $\A_{H}(\sigma)$ is the maximum action of a capped orbit $\ol{x}$ entering $\sigma\in CF_{n}(H)$. By the spectrality property of spectral invariants, the infimum is obtained. Consequently, there exists a cycle $\sigma$, satisfying $[\sigma]=PSS([M])$, such that $\cA_H(\ol{x})=c(\til\phi_H)$ for an orbit $\ol{x}$ entering $\sigma$. We call $\ol{x}$ the \textit{carrier} of the spectral invariant and observe that in order to guarantee its uniqueness all the $1$-periodic orbits of $H$ need to have distinct action values. In order to generalize the notion of carriers to the case where $H$ is degenerate and has isolated orbits we first recall that for each $C^2$-small non-degenerate perturbation $H'$ every capped $1$-periodic orbit $\ol{x}$ splits into several non-degenerate $1$-periodic orbits $\cO(H',\ol{x})$ with their capping inherited from $\ol{x}$. 

\begin{df}[Carrier of degenerate $H$ with isolated orbits]\label{df:carrier_degenerate} A capped $1$-periodic orbit $\ol{x}$ is said to be a carrier of the spectral invariant if there exists a sequence $\{H'_k\}$ of non-degenerate perturbations $C^2$-converging to $H$ such that for each $k$, one of the orbits in $\cO(H'_k,\ol{x})$ is a carrier for $H'_k$. A uncapped orbit is said to be a carrier if it becomes one for a suitable capping.
\end{df}

As in the non-degenerate case, the uniqueness of the carrier follows from all the 1-periodic orbits having distinct action values. In this case, the carrier becomes independent of the choice of sequence $\{H'_k\}$. Moreover, since the Floer complex is stable under small perturbations of the Hamiltonian function and auxiliary data, we may suppose that all 1-periodic orbits have distinct action values and that $c(\til\phi_H)=c(\til\phi_{H'_k})$ for all $k$. In addition, the definition of a carrier and the continuity of the action functional and of the mean-index, readily yield
\begin{equation*}
c(\til\phi_H) = \cA_H(\ol{x})\quad\text{and}\quad 0\leq\Delta(\til\phi_H,\ol{x})\leq2n,
\end{equation*}

where the inequalities can be made strict in the case where the orbit $x$ is weakly non-degenerate. In \cite{GG-negmon} the following result was obtained.

\begin{lma}\label{lma:carrier_spectral_invariant}
	Suppose $H$ only has isolated 1-periodic orbits and let $\ol{x}$ be a carrier of the spectral invariant of the fundamental class. Then $HF^{\loc}_n(H,\ol{x})\neq0$.
\end{lma}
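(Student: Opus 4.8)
The plan is to identify the local Floer homology $HF^{\loc}_*(H,\ol{x})$ with the Floer homology of $H$ in a short action window around the spectral value $c := c(\til\phi_H) = \cA_H(\ol{x})$, and then to deduce non-vanishing in degree $n$ from the fact that the spectral invariant of the fundamental class is created precisely in that window. Since $\ol{x}$ is a carrier, $\cA_H(\ol{x}) = c(\til\phi_H) =: c$, and, assuming as we may (see the discussion preceding the lemma) that the $1$-periodic orbits of $H$ have pairwise distinct action values, $\ol{x}$ is the unique capped orbit of $H$ at action level $c$. Because $H$ has only isolated $1$-periodic orbits and $M$ is closed, there are finitely many of them, so $\Spec(H) \subset \R$ is discrete and $c$ is isolated in it; fix $c_- < c < c_+$ with $c_\pm \notin \Spec(H)$ and $(c_-,c_+) \cap \Spec(H) = \{c\}$.

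The first step is the identification of complexes. For a $C^2$-small non-degenerate perturbation $H'$ of $H$, the only $1$-periodic orbits of $H'$ with action in $(c_-,c_+)$ are those in $\cO(H',\ol{x})$, and all Floer cylinders connecting them remain in a fixed isolating neighbourhood of $x$; hence the window complex $CF^{(c_-,c_+)}_*(H') = CF^{<c_+}_*(H')/CF^{<c_-}_*(H')$ is literally the complex computing $HF^{\loc}_*(H,\ol{x})$ (with the gradings by the Conley–Zehnder index induced by cappings inherited from $\ol{x}$), so that
\[ HF^{(c_-,c_+)}_*(H) \;\cong\; HF^{\loc}_*(H,\ol{x}). \]
In the irrational symplectically Calabi–Yau case one reads $CF^{<a}(H)$ as the direct limit over non-degenerate perturbations, which leaves the rest of the argument unchanged.

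The second step is the carrier argument itself. Using the PSS isomorphism $QH_{2n}(M) \cong HF_n(H)$ together with the tautological short exact sequence $0 \to CF^{<c_-}(H) \to CF^{<c_+}(H) \to CF^{(c_-,c_+)}(H) \to 0$, I consider the induced long exact sequence in degree $n$,
\[ HF^{<c_-}_n(H) \xrightarrow{\ \iota\ } HF^{<c_+}_n(H) \xrightarrow{\ p\ } HF^{(c_-,c_+)}_n(H) \longrightarrow HF^{<c_-}_{n-1}(H), \]
and recall that $c([M],H) = c \in (c_-,c_+)$ says, on the one hand, that $PSS([M]) \in HF_n(H)$ lifts to some $\alpha \in HF^{<c_+}_n(H)$ (since $c_+ > c$), and, on the other hand, that $PSS([M])$ is not in the image of $i_{c_-}\colon HF^{<c_-}_n(H) \to HF_n(H)$ (since $c_- < c$). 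If $p(\alpha)$ were zero, then $\alpha = \iota(\beta)$ for some $\beta \in HF^{<c_-}_n(H)$, and pushing forward to $HF_n(H)$ would exhibit $PSS([M])$ in the image of $i_{c_-}$, a contradiction. Hence $p(\alpha) \neq 0$, so $HF^{(c_-,c_+)}_n(H) \neq 0$, which by the identification above is exactly $HF^{\loc}_n(H,\ol{x}) \neq 0$.

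The main obstacle I anticipate is the bookkeeping in the first step: choosing the perturbation $H'$ and the endpoints $c_\pm$ so that the window complex coincides on the nose with the local Floer complex. This rests on $c$ being isolated in $\Spec(H)$ and on $\ol{x}$ being the only orbit at that level (both available here), together with the standard energy/compactness argument confining low-energy Floer cylinders between the orbits of $\cO(H',\ol{x})$ to an isolating neighbourhood. I would also note that the degree $n$ in the statement is forced entirely by the PSS shift $QH_{2n}(M) \to HF_n(H)$ applied to $[M]$; the mean-index constraint $0 \le \Delta(\til\phi_H,\ol{x}) \le 2n$ is consistent with this but is not used in the argument.
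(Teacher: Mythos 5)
Your argument is correct in the setting where the lemma is actually used, but it is not the route of the source: the paper does not reprove this statement (it quotes it from Ginzburg--G\"{u}rel), and the argument there --- like the paper's own treatment of the generalized version via the decomposition of the Floer differential and Theorem \ref{thm: homological perturbation complex} --- is chain-level: one takes a minimizing cycle representing $PSS([M])$, splits the differential into local pieces plus a part that strictly drops the action by the crossing energy $\delta$, and shows that vanishing of $HF^{\loc}_n(H,\ol{x})$ would let one cancel the top-action part of the cycle, contradicting the carrier property. You instead package the same locality input (small-energy trajectories stay in the isolating neighbourhood) into the identification $HF^{(c_-,c_+)}_*(H)\cong HF^{\loc}_*(H,\ol{x})$ --- which is exactly item (iii) of Proposition \ref{prop: barcodes} --- and then run a purely homological two-out-of-three argument with the long exact sequence and the definition of $c([M],H)$ as an infimum over an upward-closed set. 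This buys a cleaner, perturbation-free bookkeeping at the homology level; the chain-level route buys a statement tied directly to the given carrier and its defining minimizing cycles, and is what generalizes to the canonical filtered complexes used later in the paper.

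Two caveats. First, your assertion that finitely many orbits force $\Spec(H)$ to be discrete ignores recapping: the spectrum consists of finitely many cosets of $\cP_{\om}$, so discreteness holds precisely in the rational case (monotone, negative monotone, aspherical), and your parenthetical that the irrational symplectically Calabi--Yau case ``leaves the rest of the argument unchanged'' is not correct --- there $\Spec(H)$ can be dense and no window $(c_-,c_+)$ isolates $c$, so the window-to-local identification breaks down. This is peripheral (the lemma is stated and applied in the rational setting, and the window should in any case be taken of length $<\rho$ so that recapped orbits stay outside it), but the remark should be dropped or qualified. Second, the reduction to pairwise distinct action values of capped orbits is doing real work: without it the window may contain several capped orbits at level $c$, the window homology is the direct sum of their local homologies, and your argument only shows that \emph{some} orbit at level $c$ has nonvanishing $HF^{\loc}_n$, not the given carrier $\ol{x}$. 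This genericity is the same assumption the paper (following Ginzburg--G\"{u}rel) makes to have a well-defined unique carrier, so invoking it is legitimate, but it is worth stating explicitly that the lemma is being proved under that normalization rather than for an arbitrary carrier of a Hamiltonian with coinciding action values.
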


In Section \ref{subsubsection:gen_local_FH} below, we generalize this statement to the case of isolated {path}-connected sets of periodic orbits, and also to arbitrary quantum homology classes.

\section{Isolated connected sets of periodic points}

\subsection{Generalized perfect Hamiltonians}
Recall that a Hamiltonian $H$ is called perfect if it has a finite number of contractible periodic points of all periods. We consider the more general condition where $H$ has finitely many isolated path-connected families of 1-periodic orbits, which in turn implies that $\fix(\phi_H)$ is composed of finitely many isolated path-connected sets. 

\begin{df}\label{df:generalized_perfect}
	A Hamiltonian diffeomorphism $\phi\in\Ham(M,\om)$ is \textit{generalized perfect} whenever the following conditions are met:
	\begin{enumerate}[leftmargin=*, label = (\roman*)]
		
		\item $\text{Fix}(\phi)$ has finitely many isolated compact path-connected components.
		\item There exists a sequence of integers $k_i\ra\infty$ such that $\text{Fix}(\phi^{k_i}) = \text{Fix}(\phi)$ for all $i$.
		\item For each isolated path-connected component $\cl{F}$ of $\fix(\phi),$ the mean index $\Delta(H^{(k_i)},\ol{x}^{(k_i)})$ where $x \in \cl{F}$ and $\ol{x} \in \ol{\mf{F}}$ is a constant function of $x \in \cl{F}.$ We denote this constant by $\Delta(H^{(k_i)},\ol{\mf F}^{(k_{i})}).$
	\end{enumerate}
	
\end{df}

An isolated path-connected component $\cF\subset\fix(\phi)$ can be thought of as, and indeed called in this paper, a {\em generalized fixed point}. In this section we slightly generalize some of the theory discussed in Section \ref{sec: prelims} allowing us to treat generalized perfect Hamiltonians. We observe that the third condition in Definition \ref{df:generalized_perfect} is non-vacuous: indeed, one can construct an example of a generalized fixed point $\cF$ where the mean-index is not a constant function of $x\in\cF$ by means of the Hamiltonian suspension construction \cite[Section 3.1]{P-book} applied to an appropriate contractible Hamiltonian loop of $S^2$. However, as stated in Theorem \ref{thm: torsion is perfect}, a $p$-torsion Hamiltonian diffeomorphism is weakly non-degenerate generalized perfect: in particular, the mean-index is constant on each generalized fixed point.


\subsubsection{Lifts of generalized $1$-periodic orbits} 
Let $(M,\om)$ be a symplectic manifold in one of the three classes we are considering and $H$ a Hamiltonian function generating a generalized perfect Hamiltonian $\phi_H$ on $M$. Denote the path-connected isolated sets of $\fix(\phi_H)$ by $\cF_1,\dots,\cF_m$. For each $x\in\cF_j$ and every $j$ there is a corresponding contractible loop given by $x(t)=\phi_H^t(x)$, thus to each isolated fixed point set $\cF_j$ we can associate a subset $\fF_j$ of the space $\cL_{pt}M$ of all contractible loops in $M$. It is natural to ask whether the generalized orbits $\fF_j$ lift to the $\Gamma$-cover $\til{\cL}_{pt}M$ in a suitable manner, namely, if the preimage under the projection $Pr:\til\cL_{pt}M\ra\cL_{pt}M$ is composed of isolated path-connected ``copies" of $\fF_j$. We show that the lift exists and denote by $\ol\fF_j$ a particular lift of $\fF_j$. This is analogous to a capping of an orbit in the case of a usual Hamiltonian.

Consider the set $\fF$ associated to $\cF\in\pi_{0}(\fix(\phi_{H}))$ and let $i:\fF\ra\cL_{pt}M$ be the natural inclusion map. Formally, we are asking when, given a loop $x_0\in\fF$ and $\ol{x}_0\in Pr^{-1}(\{x_0\})$, does a lift of $i$ exists, namely, a unique map $f:\fF\ra\til\cL_{pt}M$ such that $f(x_0)=\ol{x}_0$ and $Pr\circ f = i$. From the theory of covering spaces, the existence of the lift is equivalent to $i_*(\pi_1(\fF,x_0))\subset Pr_*(\pi_1(\til\cL_{pt}M,\ol{x}_0))$.

\begin{prop}\label{existence_of_lifts}
	Let $(M,\om)$ be a symplectic manifold in one of the three classes considered in this paper and $\phi_H$ a generalized perfect Hamiltonian diffeomorphism. Then each generalized orbit $\fF$ can be lifted to $\ol{\fF}$ in a unique manner specified by a loop $x_0\in\fF$ and an element in its fiber $\ol{x}_0\in Pr^{-1}(x_0)$. 
\end{prop}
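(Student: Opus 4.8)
The plan is to reduce the statement to the standard lifting criterion for covering spaces and then to compute a single monodromy. Recall that $Pr\colon\til\cL_{pt}M\ra\cL_{pt}M$ is a principal $\Gamma$-covering with $\Gamma=\pi_2(M)/\ker[\om]$, classified by the monodromy homomorphism $\theta\colon\pi_1(\cL_{pt}M,x_0)\ra\Gamma$ which sends a loop $\ga=\{x_s\}_{s\in\rS^1}$ of contractible loops to the class of the sphere obtained by gluing a capping disk of $x_0$, together with its orientation reversal, to the two (coinciding) boundary circles of the cylinder swept by $\ga$; since $\Gamma=\pi_2(M)/\ker[\om]$, this class is detected by evaluating $[\om]$ on it, that is, by the total symplectic area swept by $\ga$. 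By elementary covering space theory, the lift $f\colon\fF\ra\til\cL_{pt}M$ with $f(x_0)=\ol{x}_0$ and $Pr\circ f=i$ exists and is unique precisely when $\theta\big(i_*\pi_1(\fF,x_0)\big)=0$, so the whole proposition reduces to this vanishing.

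To make the reduction concrete I would first identify $\fF$ with $\cF$ via the orbit map $x\mapsto\big(t\mapsto\phi_H^t(x)\big)$, which is a continuous bijection out of the compact set $\cF$ (injectivity seen by evaluating at $t=0$) and hence a homeomorphism onto $\fF$. Thus every loop in $\fF$ based at $x_0(\cdot)=\phi_H^{\cdot}(x_0)$ has the form $\ga(s)=\phi_H^{\cdot}(\sigma(s))$ for a loop $\sigma$ in $\cF$, and its swept cylinder is the torus $\Phi(s,t)=\phi_H^t(\sigma(s))$. The key computation, which I would carry out using $\partial_t\Phi=X_H^t(\Phi)$ together with $\iota_{X_H^t}\om=-dH_t$, is that
\[
\Phi^*\om=\om\big(\partial_s\Phi,X_H^t(\Phi)\big)\,ds\wedge dt=dH_t(\partial_s\Phi)\,ds\wedge dt=\partial_s\big(H(t,\Phi(s,t))\big)\,ds\wedge dt,
\]
so that the swept area telescopes to $\int_0^1\big(H(t,\Phi(1,t))-H(t,\Phi(0,t))\big)\,dt$, which vanishes because $\sigma(1)=\sigma(0)$. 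Hence $\theta(i_*[\ga])=0$ in $\Gamma$ for every such $\ga$, which is exactly the required vanishing $\theta(i_*\pi_1(\fF,x_0))=0$.

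With this in hand, existence and uniqueness of $\ol{\fF}=f(\fF)$ follow at once. Two further points I would spell out. First, continuity of $f$: rather than quoting the lifting criterion as a black box, I would describe $f$ directly, sending $x\in\fF$ to the capping of $x(\cdot)$ obtained by transporting $\ol{x}_0$ along any path in $\fF$ from $x_0$ to $x$; the area computation above shows path-independence, and this description makes continuity transparent, the only mild point-set input being path-connectedness of $\cF$, which is part of the hypotheses (and in the main application $\cF$ is even a symplectic submanifold by Theorem~\ref{thm: torsion is perfect}). Second, I would note that in each of the three classes of manifolds considered one has $[\om]=\kappa\,c_1(TM)$ on spherical classes, with $\kappa=0$ in the Calabi--Yau case, so vanishing of $[\om]$ on the capped-off sphere forces vanishing of $c_1(TM)$ there as well; the same argument therefore also lifts $\fF$ to the finer $\Z$-graded cover of $\cL_{pt}M$ with deck group $\pi_2(M)/(\ker[\om]\cap\ker c_1(TM))$ used for graded local Floer homology. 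I do not expect a genuine obstacle here: the content is simply recognizing that the monodromy is literally the swept symplectic area, and that this area is a telescoping Hamiltonian integral, the rest being the routine bookkeeping relating $\fF$ and $\cF$.
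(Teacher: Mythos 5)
Your proof is correct, and it reaches the same reduction as the paper -- everything hinges on showing that every loop $\ga$ in $\fF$ sweeps a torus of zero symplectic area, after which covering-space theory gives existence and uniqueness of the lift -- but you establish the key vanishing by a genuinely different argument. The paper argues softly: each $\til\ga_s$ is a critical point of $\cA_H$, so $s\mapsto\cA_H(\til\ga_s)$ is a continuous function with values in $\Spec(H)$, which has measure zero; hence it is constant, and the equality $\cA_H(\til\ga_1)=\cA_H(\til\ga_0)$ is exactly $\int_{\rT^2}\ga^*\om=0$. You instead compute directly, from $\partial_t\Phi=X_H^t(\Phi)$ and $\iota_{X_H^t}\om=-dH_t$, that $\Phi^*\om=\partial_s\bigl(H(t,\Phi(s,t))\bigr)\,ds\wedge dt$, which telescopes to zero because $\sigma$ is a loop. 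Your computation is essentially the first-variation formula for $\cA_H$ made explicit; it is more elementary and self-contained, in that it does not invoke the measure-zero property of the action spectrum (a point which, in the irrational Calabi--Yau case, the paper needs to cite), and it yields the slightly stronger fact that \emph{any} smooth loop of $1$-periodic orbits sweeps zero area. Your explicit identification of $\fF$ with $\cF$ via the orbit map, and the direct description of the lift by transporting $\ol{x}_0$ along paths, match the paper's construction; both treatments gloss equally over the local path-connectedness needed for the lifting criterion, which is harmless since in the application $\cF$ is a submanifold. One small correction to your closing side remark: in the symplectically Calabi--Yau case it is $c_1(TM)$ that vanishes on spherical classes, not $[\om]$; writing $[\om]=\kappa\,c_1(TM)$ with $\kappa=0$ would describe the aspherical case instead. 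Your intended conclusion about the finer graded cover still holds (in all three classes $\ker[\om]\subset\ker c_1$ on spherical classes), and in any case that remark is beyond what the proposition asserts.
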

\begin{proof}
Let $\ga$ be a loop in $\fF$ such that $\ga_0=x_0$. We show that we can find $\til\ga$ such that $i\circ\ga=Pr\circ\til\ga$, which implies the claim of the theorem. We build $\til\ga$ in a natural way by defining the capping at $\ga_s$ to be given by gluing the ``cylinder" given by traversing the loop $\ga$ from $0$ to $s$ to the capping $\ol{x}_0$. To see that the capped orbits $\til\ga_0$ and $\til\ga_1$ are equivalent in $\til\cL_{pt}M$, we show that
\begin{equation}\label{symplectic_area_torus}
\int_{\rT^2}\ga^*\om = 0 
\end{equation}
for every loop $\ga$ in $\fF$. We can then guarantee the existence of a lift. Equation (\ref{symplectic_area_torus}) follows from the continuity of $\cA_H$ and the fact that $\Spec(H)$ has zero measure in $\R$. Indeed, $\cA_H(\til\ga_s)=\cA_H(\til\ga_0)$ for every $s$, otherwise, the fact that $\til\ga_s$ is a critical point for each $s$ would imply that $\cA_H(\bigcup_{0\leq t\leq s}\til\ga_t)$ is a positive measure subset of $\Spec(H)$. Finally $\cA_H(\til\ga_1)=\cA_H(\til\ga_0)$ amounts to fulfilling the sufficient condition given by equation (\ref{symplectic_area_torus}).
\end{proof}

	

\subsubsection{Generalized local Floer homology}\label{subsubsection:gen_local_FH}
In this section, we define a version of local Floer homology for a generalized capped orbit $\ol\fF\subset\til\cL_{pt}M$ of a $1$-periodic Hamiltonian $H$ in a way closely related to what was done in \cite{McLean-geodesics, GG-negmon}. The differences being that we are beyond the aspherical case and we are dealing with path-connected components of $\fix(\phi_{H})$ instead of isolated points. The proofs of \cite{McLean-geodesics} used to define the local homology are valid in this case with nearly no modifications. The notion of local Floer homology in a more general setting goes back to the original work of Floer \cite{Floer-MorseWitten,Floer3} and has been revisted a number of times, for example in the work of Pozniak \cite{Pozniak}. The main ingredients of the construction are as follows.


For each $\cF\in\pi_{0}(\fix(\phi_{H}))$, we can find an isolating neighbourhood $U_{\cF}$ of the corresponding generalized $1$-periodic orbit $\mf{F}$ in the extended phase-space $\rS^{1}\times M$. That is 
\begin{equation*}
    \{ (t,\phi^{t}_{H}(x))\;|\; {t\in[0,1],x\in\cF} \} \subset U_{\cF}
\end{equation*}
and $U_{\cF}$ is disjoint from $U_{\cF'}$ for any pair of distinct generalized fixed points $\cF, \cF'$. Such an open set $U_{\cF}$ in the extended phase-space can be constructed, using the isotopy $\phi^{t}_{H},$ from an open neighbourhood of $\cF$ in $M.$ Hence by a slight abuse of notation we think of $U_{\cF}$ as a neighborhood of $\cF$ in $M.$

Then there exists an $\varepsilon>0$ small enough such that for any non-degenerate Hamiltonian perturbation $H'$ satisfying $\|H-H'\|_{C^2}<\varepsilon$ the orbits which $\fF$ splits into $\cO(H',\fF)$ will be contained in $U_{\cF}$ and so will every (broken) Floer trajectory connecting any such two orbits (see Lemma \ref{crossing_energy}). We can now consider the complex $CF_*(H',\ol\fF)$ over a ground field $\bK$ generated by the capped $1$-periodic orbits $\cO(H',\ol{\fF})$ which $\ol{\fF}$ splits into, where the cappings are naturally produced from the specific lift $\ol\fF$. One can see that this complex will be graded by the Conley-Zehnder index and will have a well defined differential. By a standard continuation argument one can show that the homology of this complex is independent of the non-degenerate perturbation (once it is close enough) and of the choice of almost complex structure. We refer to the resulting homology as the local Floer homology of $H$ at $\ol\fF$ and denote it by $HF^{\loc}_*(H,\ol\fF)$. Set \[ \Delta^{\min}(H,\mf{F}) = \min_{\ol{x}\in\ol{\fF}}\Delta(H,\ol{x}),\] \[ \Delta^{\max}(H,\mf{F}) = \max_{\ol{x}\in\ol{\fF}}\Delta(H,\ol{x}) \] for the minimimum and maximum of the mean-index $\Delta(H,\ol{x})$ for $\ol{x} \in \ol{\mf{F}}.$

We claim that if $\fF$ is a family of weakly non-degenerate orbits, the support of $HF^{\loc}_*(H,\ol\fF)$ satisfies
\begin{equation}\label{eq:supp_local_FH_generalized}
    \text{Supp}(HF^{\loc}_*(H,\ol\fF)) \subset \left(\Delta^{\min}(H,\ol{\fF})-n, \Delta^{\max}(H,\ol{\fF})+n\right).
\end{equation}
In fact, with a simple argument following from the continuity of the mean-index and inequality (\ref{mi_inqty}) one obtains that $\text{Supp}(HF^{\loc}_*(H,\ol\fF))$ satisfies the non strict version of (\ref{eq:supp_local_FH_generalized}) without having to impose the weak non-degeneracy condition. In order to obtain the strict inequalities we use the weakly non-degenerate assumption and the compactness of $\cF$ to argue as in \cite{SalamonZehnder}. In the situation where the Hamiltonian is generalized perfect, we obtain the following.

\begin{lma}\label{lma:support_localFH_gen_perf}
Suppose $H$ is a generalized perfect Hamiltonian and let $\ol{\fF}$ be a generalized capped orbit of $H$. Then $HF^{\loc}_*(H,\ol\fF)$ is supported in the open interval $(\Delta(H,\ol{\fF})-n,\Delta(H,\ol{\fF})+n)$.
\end{lma}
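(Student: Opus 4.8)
\emph{Proof plan.} The lemma is, in essence, the specialization of the support estimate \eqref{eq:supp_local_FH_generalized} to a generalized perfect Hamiltonian, the one new ingredient being that for such $H$ the mean index is \emph{constant} along the generalized capped orbit $\ol{\fF}$. So the plan is to first establish this constancy and then substitute it into \eqref{eq:supp_local_FH_generalized}.

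\emph{Step 1: the mean index is constant on $\ol{\fF}$.} Let $\cF \in \pi_0(\fix(\phi_H))$ be the generalized fixed point underlying $\ol{\fF}$ and let $(k_i)$ be a sequence with $\fix(\phi_H^{k_i}) = \fix(\phi_H)$ as in Definition \ref{df:generalized_perfect}. Condition (iii) of that definition says that $x \mapsto \Delta(H^{(k_i)}, \ol{x}^{(k_i)})$ is constant on $\cF$ for each $i$. Since the mean index is homogeneous under iteration, $\Delta(H^{(k_i)}, \ol{x}^{(k_i)}) = k_i \cdot \Delta(H, \ol{x})$, and as $k_i \neq 0$ this forces $x \mapsto \Delta(H, \ol{x})$ to be constant on $\cF$, hence on $\ol{\fF}$. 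Writing $\Delta(H, \ol{\fF})$ for the common value, we get $\Delta^{\min}(H, \ol{\fF}) = \Delta^{\max}(H, \ol{\fF}) = \Delta(H, \ol{\fF})$.

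\emph{Step 2: conclude.} Substituting this into \eqref{eq:supp_local_FH_generalized}, which reads \[\mathrm{Supp}\big(HF^{\loc}_*(H,\ol{\fF})\big) \subset \big(\Delta^{\min}(H,\ol{\fF}) - n,\ \Delta^{\max}(H,\ol{\fF}) + n\big),\] yields exactly $\mathrm{Supp}(HF^{\loc}_*(H,\ol{\fF})) \subset (\Delta(H,\ol{\fF}) - n,\ \Delta(H,\ol{\fF}) + n)$, which is the claim.

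\emph{Where the difficulty lies.} Logically these two steps are short, so the substance is in the proof of \eqref{eq:supp_local_FH_generalized}, which I would run as follows. Fix a $C^2$-small non-degenerate perturbation $H'$ of $H$; by the crossing-energy estimate (Lemma \ref{crossing_energy}) the complex $CF_*(H', \ol{\fF})$ and its Floer differential stay inside the isolating neighborhood $U_{\cF}$, so $HF^{\loc}_*(H,\ol{\fF})$ is computed from the capped orbits $\cO(H', \ol{\fF})$ with their Conley-Zehnder gradings. For each such $\ol{y}$ inequality \eqref{mi_inqty} gives $|\mathrm{CZ}(\ol{y}) - \Delta(H', \ol{y})| < n$, and letting $H' \to H$ one has $\ol{y} \to \ol{x} \in \ol{\fF}$ and $\Delta(H', \ol{y}) \to \Delta(H, \ol{x}) \in [\Delta^{\min}, \Delta^{\max}]$; already this gives the non-strict inclusion, with no non-degeneracy hypothesis. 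To upgrade to the open interval one argues in families following Salamon-Zehnder \cite{SalamonZehnder}: weak non-degeneracy of $x \in \cF$ means the symplectic path $\{D(\phi^t_H)_{x(t)}\}$, trivialized by the capping, has an eigenvalue that leaves $1$, which sharpens the bound to $|\mathrm{CZ}(\ol{y}) - \Delta(H, \ol{x})| \le n - \delta(x)$ with $\delta(x) > 0$. The hard part will be to make $\delta(x)$ uniform over $\cF$: it depends a priori on the point, and must be bounded below along the possibly positive-dimensional family. This is where the compactness of $\cF$ (a closed subset of $M$), the continuity of the mean index and of the linearized flow along $\ol{\fF}$, and again the confinement of Floer trajectories from Lemma \ref{crossing_energy} are all needed; once a uniform $\delta > 0$ is in hand the strict inclusion follows, and with it, via Step 1, the lemma.
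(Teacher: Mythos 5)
Your proposal is correct and follows essentially the same route as the paper: the paper also reads the lemma as the specialization of the support estimate \eqref{eq:supp_local_FH_generalized} to the case where, by condition (iii) of Definition \ref{df:generalized_perfect} together with homogeneity of the mean index under iteration, $\Delta^{\min}(H,\ol{\fF})=\Delta^{\max}(H,\ol{\fF})=\Delta(H,\ol{\fF})$, with the non-strict inclusion coming from continuity of the mean index and \eqref{mi_inqty} and the strict one from the Salamon--Zehnder argument using weak non-degeneracy and compactness of $\cF$. Note only that, exactly as in your Step on the open interval, the strict inclusion genuinely needs the weak non-degeneracy hypothesis (which Definition \ref{df:generalized_perfect} alone does not provide and the lemma statement leaves implicit), so your explicit use of it is consistent with how the paper actually applies the lemma.
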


Furthermore, the notion of action carriers discussed in section \ref{sec:action_carrier} remains valid in this generalized setting by altering isolated fixed points to generalized fixed points in Definition \ref{df:carrier_degenerate}. Thus, the spectral invariant $c([M],H)$ is carried by a capped generalized periodic orbit $\ol\fF$ of $H$. In this case, we have the following generalization of Lemma $\ref{lma:carrier_spectral_invariant}$, whose proof, once Lemma \ref{crossing_energy} below is taken into account, follows just as in \cite{GG-negmon}.
\begin{lma}\label{lma:carrier_spectral_invariant_genralized}
	Suppose $H$ has only a finite number of generalized fixed points and let $\ol{\fF}$ be a carrier of the spectral invariant of the fundamental class. Then $HF^{\loc}_n(H,\ol{\fF})\neq0$.
\end{lma}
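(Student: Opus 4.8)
The plan is to adapt the argument of Ginzburg--G\"urel for Lemma \ref{lma:carrier_spectral_invariant} to the generalized setting, the only new input being the crossing-energy estimate recorded in Lemma \ref{crossing_energy}. First I would recall the mechanism behind the non-generalized statement: if $\ol{\fF}$ carries the spectral invariant $c(H) = c([M],H)$, then picking a sequence $H'_k \to H$ of small non-degenerate perturbations realizing the carrier, for each $k$ one of the orbits $\ol{x}_k \in \cO(H'_k, \ol{\fF})$ has $\cA_{H'_k}(\ol{x}_k) = c(H'_k) = c(H)$ and enters the PSS cycle $\sigma_k$ representing $PSS([M]) \in HF_n(H'_k)$. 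The degree-$n$ part of the statement then comes from the fact that the carrier of the fundamental class always sits in Conley--Zehnder degree $n$ (this is the definition of the spectral invariant of $[M]$ together with the grading conventions), so $\ol{x}_k$ contributes to $CF_n(H'_k, \ol{\fF})$, and one must show it is not killed, i.e.\ it survives to $HF^{\loc}_n(H, \ol{\fF})$.

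The key step is to pass from the global non-vanishing, namely that $\ol{x}_k$ represents a non-trivial class in the quotient complex $CF_*(H'_k)^{[c(H)-\epsilon, c(H)]}$ for small $\epsilon > 0$ (equivalently, that $PSS([M])$ is not in the image of $i_{c(H)-\epsilon}$), to the \emph{local} non-vanishing. This is exactly where the crossing-energy estimate enters: by Lemma \ref{crossing_energy}, for $H'_k$ sufficiently $C^2$-close to $H$, every Floer trajectory of $H'_k$ between two capped orbits in $\cO(H'_k, \ol{\fF})$ stays inside the isolating neighborhood $U_{\cF}$, and conversely any trajectory leaving $U_{\cF}$ has energy bounded below by a definite constant. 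Choosing $\epsilon$ smaller than this crossing-energy threshold, the portion of the Floer differential of $H'_k$ relevant in the action window $(c(H)-\epsilon, c(H)]$ and involving orbits near $\cF$ coincides with the local Floer differential computing $HF^{\loc}_*(H'_k, \ol{\fF}) \cong HF^{\loc}_*(H, \ol{\fF})$. Hence the class of $\ol{x}_k$ being non-trivial in the global filtered homology, and being supported (via Lemma \ref{lma:support_localFH_gen_perf} or its non-strict analogue) in degree $n$ at action level $c(H)$, forces $HF^{\loc}_n(H, \ol{\fF}) \neq 0$: if it vanished, $\ol{x}_k$ would be a boundary in the local complex, and by the crossing-energy bound this local bounding chain would also bound it globally within the action window, contradicting that $\ol{x}_k$ carries the spectral invariant.

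More carefully, I would run the argument as follows. Fix $k$ large and write $c = c(H) = c(H'_k)$. The cycle $\sigma_k \in CF_n(H'_k)$ with $[\sigma_k] = PSS([M])$ and $\cA_{H'_k}(\sigma_k) = c$ decomposes, after truncating at action $c - \epsilon$, as a sum $\sigma_k = \sigma_k^{loc} + (\text{terms at other generalized fixed points at level } c) + (\text{terms below } c-\epsilon)$; by disjointness of the isolating neighborhoods $U_{\cF}$ and the crossing-energy estimate, $d_{H'_k} \sigma_k^{loc}$ has no component below action $c - \epsilon$ coming from outside $U_{\cF}$, so $\sigma_k^{loc}$ is, up to lower-action terms, a cycle in the local complex $(CF_*(H'_k, \ol{\fF}), d^{loc})$. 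If $HF^{\loc}_n(H,\ol{\fF}) = 0$, then $\sigma_k^{loc} = d^{loc} \tau_k$ for some $\tau_k \in CF_{n+1}(H'_k, \ol{\fF})$; again by the crossing-energy bound, $d_{H'_k}\tau_k = \sigma_k^{loc} + (\text{terms of action} < c - \epsilon)$, so subtracting $d_{H'_k}\tau_k$ from $\sigma_k$ produces a cycle representing $PSS([M])$ of action $< c$, contradicting $c(H) = c$. Therefore $HF^{\loc}_n(H,\ol{\fF}) \neq 0$.

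The main obstacle is the crossing-energy estimate itself, i.e.\ Lemma \ref{crossing_energy}, which guarantees that Floer trajectories connecting orbits in $\cO(H', \ol{\fF})$ stay in $U_{\cF}$ and that escaping trajectories have energy bounded away from zero, \emph{uniformly} as $H' \to H$; in the non-generalized case this is classical (compactness of a single orbit), but here one needs it for a whole compact path-connected family $\cF$ of fixed points, and the uniformity in the perturbation $H'$ must be handled with care. Granting that lemma, everything else is a bookkeeping translation of the Ginzburg--G\"urel argument, using Lemma \ref{lma:support_localFH_gen_perf} (or rather its non-strict form, which needs no weak non-degeneracy) to locate the carrier in degree exactly $n$.
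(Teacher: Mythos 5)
Your proposal is correct and is essentially the paper's own argument: the paper proves this lemma simply by remarking that, once the crossing-energy estimate of Lemma \ref{crossing_energy} is in place, the Ginzburg--G\"urel proof of Lemma \ref{lma:carrier_spectral_invariant} applies verbatim with generalized fixed points replacing isolated ones, which is exactly the push-down-by-a-local-boundary mechanism you describe (local part of the minimizing cycle is a local cycle, kill it if $HF^{\loc}_n$ vanished, and the cross-term $D$ drops action by at least the crossing energy $\delta$). The one point to make explicit is that your final contradiction requires the top action level $c$ to be occupied only by orbits of $\cO(H'_k,\ol{\fF})$, i.e.\ the ``terms at other generalized fixed points at level $c$'' in your decomposition must be absent; this is precisely the standard normalization, stated in the paper's discussion of carriers, that all capped (generalized) periodic orbits may be assumed to have distinct action values with $c(\til\phi_H)=c(\til\phi_{H'_k})$ for all $k$.
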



\begin{rmk}\label{rmk:generalized_localFH_recapping}
Let $\cF\in\pi_{0}(\fix(\phi))$ and $\fF\subset\cL_{pt}M$ the associated generalized one-periodic orbit. We remark that different choices of lifts $\ol{\fF}$ result in isomorphic local Floer homology groups with a shift in index given by an integer multiple of $2N$. In particular, if $A\in\Gamma$, then
\begin{equation*}
    HF^{\loc}_{*}(H,\ol{\fF}\#A)\cong HF^{\loc}_{*+2\brat{c_{1}(TM),A}}(H,\ol{\fF}),
\end{equation*}
where, $\ol{\fF}\#A$ denotes the unique choice of lift containing the capped orbit $\ol{x}\#A$, for $x\in\cF$ and $\ol{x}\in\ol{\fF}$. From this discussion, we conclude that $\dim_{\bK}HF^{\loc}_{*}(H,\ol{\fF})$ does not depend on the capping of $\fF$, hence, the notation $\dim_{\bK}HF^{\loc}_{*}(H,\cF)$ is justified in this case. Furthermore, when $(M,\om)$ is symplectically Calabi-Yau the local Floer homology does not depend on the choice of lift, thus we denote it by $HF^{\loc}_{*}(H,\cF)$. This is analogous to the effect of recapping on local Floer homology in the case of isolated fixed points. 
\end{rmk}

We shall require a slightly more general statement regarding carriers of quantum homology classes. The definition of a carrier $\ol{\mf{F}}$ of a quantum homology class $\alpha_M \in QH(M)$ is the same as for the fundamental class, with $[M]$ replaced by $\alpha_M.$ We then have the following result.

\begin{lma}\label{lma:carrier_spectral_invariant_genralized-QH}
Let $\alpha_M \in QH_k(M)\setminus \{0\}$ be a homogeneous element of degree $k.$ Suppose $H$ has only finitely many (contractible) generalized fixed points and let $\ol{\fF}$ be a carrier of the spectral invariant of $\alpha_M.$ Then $HF^{\loc}_k(H,\ol{\fF})\neq0$.
\end{lma}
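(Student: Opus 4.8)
The plan is to closely follow the proof of Lemma \ref{lma:carrier_spectral_invariant} from \cite{GG-negmon}, reducing the statement for an arbitrary homogeneous class $\alpha_M \in QH_k(M)$ to the non-degenerate situation and then reading off the grading constraint, while replacing isolated periodic orbits throughout by generalized (path-connected) ones. First I would recall the description of the spectral invariant via the PSS isomorphism: for a non-degenerate Hamiltonian $H'$ with all $1$-periodic orbits having distinct action values, $c(\alpha_M, \til\phi_{H'}) = \inf\{\cA_{H'}(\sigma) \mid \sigma \in CF_{k-n}(H'),\, [\sigma] = PSS(\alpha_M)\}$ and, by spectrality, this infimum is attained. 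Then there is a cycle $\sigma$ representing $PSS(\alpha_M)$ and a capped orbit $\ol{y}$ entering $\sigma$ with $\cA_{H'}(\ol{y}) = c(\alpha_M, \til\phi_{H'})$ and Conley-Zehnder index $\mrm{CZ}(\ol{y}) = k-n$. The key point is that such a $\ol{y}$ must survive in homology in the sense that it is not a boundary in the complex truncated strictly below its own action level: if it were, one could produce a cycle representing $PSS(\alpha_M)$ of strictly smaller action, contradicting minimality. Hence $\ol{y}$ contributes nontrivially to $HF_{k-n}(H')^{<c + \eps}$, or more precisely it defines a nonzero class in the local Floer homology of the generalized orbit it belongs to.

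The second step is to organize this via the generalized local Floer complex. Given a carrier $\ol{\fF}$ of $\alpha_M$ for the degenerate Hamiltonian $H$ with finitely many generalized fixed points, take a sequence $H'_j \to H$ of $C^2$-small non-degenerate perturbations realizing the carrier (Definition \ref{df:carrier_degenerate}, generalized), so that for each $j$ some orbit in $\cO(H'_j, \ol{\fF})$ is a carrier for $H'_j$. Using Lemma \ref{crossing_energy} (the crossing-energy estimate), for $j$ large every Floer trajectory between orbits of $\cO(H'_j, \ol{\fF})$ stays inside the isolating neighborhood $U_{\cF}$, so $CF_*(H'_j, \ol{\fF})$ is genuinely a subquotient of the global complex and computes $HF^{\loc}_*(H, \ol{\fF})$. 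The argument from the non-degenerate case localizes: the carrier orbit $\ol{y} \in \cO(H'_j, \ol{\fF})$ of action $c(\alpha_M, \til\phi_{H'_j})$ cannot be killed by anything of strictly smaller action inside $U_{\cF}$ (anything of smaller action globally would descend a representative of $PSS(\alpha_M)$), hence it represents a nonzero class in $HF^{\loc}_{*}(H, \ol{\fF})$. It remains to pin the degree: since $H'_j$ is non-degenerate and $\ol{y}$ carries $\alpha_M \in QH_k(M)$, under our conventions $PSS$ shifts degree by $-n$, so $\mrm{CZ}(\ol{y}) = k - n$, giving $HF^{\loc}_{k-n}(H'_j,\ol{\fF}) \ne 0$ and thus $HF^{\loc}_{k}(H,\ol{\fF}) \ne 0$ with the grading normalization used in this paper. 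Here I would double-check the index conventions against Section \ref{sec: prelims}, where $\mrm{CZ}(H,\ol{x}) = n$ for a nondegenerate maximum with constant capping and $PSS : QH_*(M) \to HF_{*-n}(H)$; these are mutually consistent, so the statement with $HF^{\loc}_k$ (not $HF^{\loc}_{k-n}$) is the correct normalization once one accounts for the grading convention used for generalized local Floer homology in this section.

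The main obstacle, and the only place where genuine care beyond \cite{GG-negmon} is needed, is the localization step: making sure that minimality of the action of the carrier, which is a global statement about the total filtered Floer complex, implies non-vanishing of the \emph{local} homology of the path-connected generalized orbit $\ol{\fF}$, as opposed to merely of some orbit somewhere. This is exactly where the crossing-energy lemma (Lemma \ref{crossing_energy}) does the work — it guarantees that no trajectory can enter or leave $U_{\cF}$ with energy below the relevant threshold, so that the local differential on $CF_*(H'_j,\ol{\fF})$ agrees with the global one in the relevant action window, and a boundary of $\ol{y}$ in the truncated local complex would give a boundary globally. Once this is in hand the rest is bookkeeping: the finiteness of the set of generalized fixed points ensures the isolating neighborhoods can be chosen disjoint, the continuity of the mean index and of $\cA_H$ lets one pass to the limit $j \to \infty$ as in Remark \ref{rmk:generalized_localFH_recapping} and Lemma \ref{lma:support_localFH_gen_perf}, and the index identity $\mrm{CZ}(\ol{y}) = k - n$ is immediate from the PSS degree shift. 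So the proof is essentially: (1) attained infimum $\Rightarrow$ surviving carrier orbit; (2) crossing energy $\Rightarrow$ the carrier's survival is detected locally on $\ol{\fF}$; (3) PSS degree shift $\Rightarrow$ the surviving class sits in degree $k$.
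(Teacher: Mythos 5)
Your argument is sound in substance, but it is not the route the paper takes. The paper does not rerun the carrier argument of \cite{GG-negmon} for a general class: it records Lemma \ref{lma:carrier_spectral_invariant_genralized-QH} as a direct consequence of the stronger Theorem \ref{thm: homological perturbation complex} --- the homotopy-canonical strict complex over $\Lambda_{\bK}$ whose generators are bases of the generalized local Floer homology groups, built by homological perturbation from the decomposition of Section \ref{subsubsec: decomposition}; this is the adaptation of \cite[Theorem D]{S-PRQS} (and of the canonical complexes of \cite{S-HZ}) to generalized fixed points, with Lemma \ref{crossing_energy} as the geometric input. From that complex the lemma is immediate: a minimal-action representative of $PSS(\alpha_M)$ has its top-action term sitting in $HF^{\loc}(\til\phi,\ol{\mf{F}})$ for the carrier. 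What you do instead is adapt the direct minimax/cycle-pushing argument of \cite{GG-negmon} --- which the paper itself invokes only for the fundamental class, in Lemma \ref{lma:carrier_spectral_invariant_genralized} --- to an arbitrary homogeneous class: the infimum is attained, the top-action part of a minimizing cycle inside $U_{\cF}$ is a cycle for the local differential (here the crossing-energy gap $\delta$, a perturbation scale $\ll\delta$, and the discreteness of $\Spec(H)$ are exactly what is needed), and if it were a local boundary $\til\partial_{\cF}\tau$ then $\sigma-\partial\tau$ would represent $PSS(\alpha_M)$ with strictly smaller action, a contradiction. This is a legitimate, more elementary and self-contained proof of the lemma; what the paper's route buys is the full filtered-complex structure (used elsewhere, e.g.\ for barcodes and the spectral analysis), at the cost of invoking heavier machinery, while your route re-derives only the fragment needed here.

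One caveat on the final bookkeeping: your argument pins the carrier orbit at Conley--Zehnder index $k-n$ (homological PSS shifts degree by $-n$), and the generalized local Floer homology in this paper is graded by the Conley--Zehnder index, so what you have actually proved is nonvanishing in CZ-degree $k-n$; this is consistent with the fundamental-class case, where $[M]\in QH_{2n}$ yields $HF^{\loc}_n\neq 0$ in Lemma \ref{lma:carrier_spectral_invariant_genralized}. The closing claim that this ``thus'' gives $HF^{\loc}_{k}\neq 0$ ``with the grading normalization used in this paper'' is asserted rather than verified, and in fact the index $k$ in the lemma must be read as the Floer-theoretic degree of $PSS(\alpha_M)$ (this is how the statement is used later, for classes in $HF_n(\til\phi)\cong QH_{2n}$, and in the cohomological conventions of Section \ref{subsec: Floer coh}). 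State the convention explicitly and record your conclusion in CZ-degree $k-n$; with that adjustment the proof is complete.
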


In fact a stronger result is true, of which this statement is a direct consequence. It was proven as \cite[Theorem D]{S-PRQS} in the context of $\phi^1_H$ with isolated fixed points, but its proof adapts essentially immediately to the context of a finite number of (contractible) generalized fixed points, once Lemma \ref{crossing_energy} is established. We recall that its proof relies on homological perturbation techniques, starting from the decomposition of Section \ref{subsubsec: decomposition} and constitutes a Novikov-field version of the canonical $\Lambda^0$-complexes from \cite{S-HZ}. Our case differs from the one in \cite{S-PRQS} by replacing fixed points by generalized fixed points everywhere.

\begin{thm}\label{thm: homological perturbation complex}
Let $(M,\om)$ be a closed rational symplectic manifold. Consider the class $\til{\phi} \in \til{\Ham}(M,\om)$ of the Hamiltonian flow $\{\phi^t_H\}_{t \in [0,1]}$ of $H\in \cl H,$ with $\fix(\phi^1_H)$ consisting of a finite number of generalized fixed points. For a ground field $\bK,$ there is a homotopy-canonical complex $(C(H), d_H)$ over the Novikov field $\Lambda_{\bK}$  on the action-completion of \[\oplus HF^{\loc}_{\ast}(\til{\phi},\ol{\mf{F}})\] the sum running over all capped generalized one-periodic orbits $\ol{\mf{F}} \in \til{\cO}(H),$ that is free and graded over $\Lambda_{\bK},$ and is strict, that is $\cl{A}_H (d_{H} (y)) < \cl{A}(y)$ for all $y \in C(H),$ with respect to the non-Archimedean action-filtration $\cl{A}_H$ on $C(H)$ defined as follows: \begin{equation}\label{eq: action of sum loc} \cl A_H( \sum \lambda_j y_j ) = \max\{ -\nu(\lambda_j) + \cl A_H(y_j) \},\end{equation} \[\cl{A}_H(y_j) = \cl{A}_H(\ol{\mf{F}}_{i(j)})\] for a $\Lambda$-basis $\{y_j\}$ of $C(H)$ determined by $\{y_j \,|\, i(j) = i\}$ being a basis of $HF^{\loc}_*(\til{\phi},\ol{\mf{F}}_{i}),$ where $\fix(\phi) = \{\cl{F}_i\}$ and for each $i,$ $\ol{\mf{F}}_i$ is a choice of a lift of the generalized one-periodic orbit $\mf{F}_i$ corresponding to $\cl{F}_i$ to a capped generalized periodic orbit in $\til{\cl{O}}(H).$  Furthermore the filtered homology $HF(H)^{<a}$ is given by $HF(C(H)^{<a}),$ $C(H)^{<a} = (\cl{A}_H)^{-1}\,(-\infty,a),$ for all $a \in \R \setminus \spec(H).$ In particular $HF(H) = H(C(H),d_{H}) \cong QH(M;\Lambda_{\bK}).$ Moreover, for all $a\leq b,$ $a,b \in (\R\setminus \Spec(H)) \cup \{\infty\},$ the comparison map $HF(H)^{<a} \to HF(H)^{<b}$ is induced by the inclusion $C(H)^{<a} \to C(H)^{<b}.$
\end{thm}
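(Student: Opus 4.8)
\emph{Strategy.} The argument is a filtered Morse--Bott reduction (``homological perturbation'') carried out exactly as in the proof of \cite[Theorem D]{S-PRQS}, the only genuinely new ingredient being the crossing-energy estimate of Lemma \ref{crossing_energy} for compact path-connected families of orbits. First, since $(M,\om)$ is rational, $\Spec(H)$ is discrete: by Proposition \ref{existence_of_lifts} each capped generalized orbit $\ol{\mf F}$ carries a single action value $\cl A_H(\ol{\mf F})$, and recapping translates by $\cP_\om=\rho\Z$, so $\Spec(H)$ is a finite union of cosets of $\rho\Z$. Choose a $C^2$-small non-degenerate perturbation $H'$ of $H$ (Salamon--Zehnder) and a generic loop $J$ of $\om$-compatible almost complex structures; for $H'$ sufficiently close to $H$, $CF_*(H';J)^{<a}$ computes $HF(H)^{<a}$ for every $a\notin\Spec(H)\cup\Spec(H')$, with comparison maps induced by inclusions. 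Using the isolating neighbourhoods $U_{\cF}$ of Section \ref{subsubsection:gen_local_FH}, the $1$-periodic orbits of $H'$ split into clusters $\cO(H',\ol{\mf F})$, one per capped generalized orbit $\ol{\mf F}$ of $H$, all of whose action values lie within $\varepsilon(H')\to 0$ of $\cl A_H(\ol{\mf F})$.

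\emph{Crossing energy and homological perturbation.} By Lemma \ref{crossing_energy}, any Floer cylinder of $(H';J)$ whose two ends lie in distinct clusters has energy at least a constant $\hbar>0$ depending only on the $U_{\cF}$ and on $J$ --- in particular \emph{not} on $H'$. Hence the Floer differential decomposes as $d_{H';J}=D^{\loc}+\delta$, where $D^{\loc}=\bigoplus_{\ol{\mf F}} d^{\loc}_{\ol{\mf F}}$ is the sum of the internal (local Floer) differentials, whose homology is $HF^{\loc}_*(\til\phi,\ol{\mf F})$ by definition, and $\delta$ strictly lowers $\cl A_{H'}$ by at least $\hbar$. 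Working over the field $\bK$, pick for each cluster a special deformation retract $(I_{\ol{\mf F}},P_{\ol{\mf F}},\mathsf H_{\ol{\mf F}})$ of $(CF_*(H',\ol{\mf F}),d^{\loc}_{\ol{\mf F}})$ onto $HF^{\loc}_*(\til\phi,\ol{\mf F})$, and assemble them into $(I,P,\mathsf H)$. Since $\mathsf H$ preserves clusters (changing action by at most $2\varepsilon(H')$) while $\delta$ lowers action by $\geq\hbar$, the series $(\mathrm{id}-\mathsf H\delta)^{-1}$ converges on the action completion; the Homological Perturbation Lemma then yields $d_H := P\,\delta\,(\mathrm{id}-\mathsf H\delta)^{-1}\,I$ with $d_H^2=0$ together with a filtered homotopy equivalence to $(CF_*(H';J),d_{H';J})$. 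Passing to the completed direct sum $\bigoplus_{\ol{\mf F}}HF^{\loc}_*(\til\phi,\ol{\mf F})$ and using the $\Lambda_\bK$-module structure (recapping by $A_0$) --- i.e. choosing one lift $\ol{\mf F}_i$ per $\cF_i\in\pi_0(\fix(\phi))$ --- exhibits this as the free graded $\Lambda_\bK$-complex $(C(H),d_H)$ with basis $\{y_j\}$ as in the statement, grading by the Conley--Zehnder index with $\deg q=2N$, and with the non-Archimedean action filtration \eqref{eq: action of sum loc}. Strictness $\cl A_H(d_H y)<\cl A_H(y)$ is then immediate once $\varepsilon(H')<\hbar/4$, because every term of $d_H$ uses $\delta$ at least once while $I,P,\mathsf H$ change action by at most $2\varepsilon(H')$; here it is essential that $\hbar$ is a fixed constant, independent of how small $H'-H$ is taken.

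\emph{Filtered invariants and canonicity.} For $a\notin\Spec(H)$, once $\varepsilon(H')$ is smaller than the distance from $a$ to $\Spec(H)\cup\Spec(H')$, the filtered homotopy equivalence of the previous step restricts to $C(H)^{<a}\simeq CF_*(H';J)^{<a}$, whence $HF(C(H)^{<a})\cong HF(H)^{<a}$ compatibly with the inclusions $C(H)^{<a}\hookrightarrow C(H)^{<b}$, which therefore induce the Floer comparison maps; taking $a=+\infty$ gives $H(C(H),d_H)\cong HF(H)\cong QH(M;\Lambda_\bK)$ via PSS. Because $\Spec(H)$ is discrete, on any bounded range of thresholds one may fix a single $\varepsilon(H')$ below half the minimal gap, so these identifications are simultaneous. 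Finally, that $(C(H),d_H)$ is homotopy-canonical --- independent, up to filtered $\Lambda_\bK$-homotopy equivalence, of the choices of $H'$, $J$, and the special deformation retracts --- follows as in \cite{S-HZ,S-PRQS} from the naturality of the Homological Perturbation Lemma together with Floer continuation maps comparing different auxiliary data.

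\emph{Main obstacle.} Everything above is essentially a line-by-line transcription of \cite[Theorem D]{S-PRQS}; the single new point is the crossing/isolating-energy bound of Lemma \ref{crossing_energy}, i.e. that a definite lower bound on the energy of trajectories escaping an isolating neighbourhood persists when isolated fixed points are replaced by compact path-connected families $\cF$. This is precisely where the compactness of $\cF$ and the geometry of the neighbourhoods $U_{\cF}$ enter; once it is in hand, Steps 1--3 of the reduction and the filtered bookkeeping in the last step proceed exactly as in the isolated case.
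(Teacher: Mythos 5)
Your proposal is correct and follows essentially the same route as the paper: the paper obtains this theorem by adapting \cite[Theorem D]{S-PRQS} via filtered homological perturbation, starting from the decomposition of the Floer differential in Section \ref{subsubsec: decomposition} into local differentials plus a term strictly lowering the action, with the uniform crossing-energy bound of Lemma \ref{crossing_energy} for compact path-connected families as the only genuinely new ingredient. Your write-up merely spells out the homological perturbation bookkeeping that the paper leaves to the citation.
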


\begin{df}[Visible spectrum]
We define the visible spectrum of a Hamiltonian function $H$ as 
\begin{equation*}
    \spec^{vis}(H)=\{\cA_{H}(\ol{\fF})\,|\,HF^{\loc}_{*}(H,\ol{\fF})\neq 0\},
\end{equation*}
where $\cA_{H}(\ol{\fF})$ denotes the action of any capped orbit $\ol{x}\in\ol{\fF}$ for a lift $\ol{\fF}$ associated to a generalized fixed point $\cF\subset\fix(\phi_{H})$. Indeed, an argument similar to the proof of Proposition \ref{existence_of_lifts} shows that the restriction $\cA_{H}|_{\ol{\fF}}$ is constant. It is clear that $\spec^{vis}(H)\subset\spec(H)$. In the context of barcodes (see Section \ref{subsubsec: barcodes}), the visible spectrum corresponds to the endpoints of all bars of the barcode $\cl{B}(H)$ associated to the filtered Floer homology of $H.$
\end{df}

\subsubsection{Crossing Energy}
We show that for a small perturbation $H'$ of a generalized perfect Hamiltonian $H$, every Floer trajectory $u$ connecting orbits of $H'$ contained in distinct isolating neighbourhoods has energy bounded below by a constant independent of the perturbation. This is an important technical step. 

\begin{lma}\label{crossing_energy}
	There exist $\delta>0$ and $\varepsilon>0$ such that for every non-degenerate perturbation $H'$ of $H$ satisfying $\|H-H'\|_{C^2}<\varepsilon$, every orbit in $\cO(H',\fF_j)$ is contained in $U_{\cF_j}$ for $j=1,..,m$ and every Floer trajectory $u$ connecting capped orbits in distinct isolating neighborhoods satisfies $E(u)>\delta$.
\end{lma}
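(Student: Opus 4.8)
The plan is to treat the two assertions in turn: the first by upper semi-continuity of the fixed-point set, and the second — the crossing-energy bound itself — by a small-energy Gromov--Floer compactness argument of the type used by Ginzburg--G\"{u}rel and McLean.

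For the first assertion, fix relatively compact neighbourhoods $V_{\cF_j}\Subset U_{\cF_j}$ of the components $\cF_j$. I claim that for $\|H-H'\|_{C^2}$ sufficiently small every $1$-periodic orbit comprising $\cO(H',\mf F_j)$ has its graph inside $V_{\cF_j}$. Otherwise one would get a sequence $H'_k\to H$ in $C^2$ whose flows $\phi^t_{H'_k}$ are $C^1$-close to $\phi^t_H$ and produce $1$-periodic orbits meeting the fixed compact set $\overline{U_{\cF_j}}\setminus V_{\cF_j}$; these would accumulate at a $1$-periodic orbit of $H$ lying in that set, contradicting the fact that $U_{\cF_j}$ is an isolating neighbourhood of $\mf F_j$. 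This refinement of the stated containment is what we shall use below.

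For the energy bound I would argue by contradiction: assume no pair $(\delta,\varepsilon)$ works. Then there are $H'_k\to H$ in $C^2$, generic $\om$-compatible almost complex structures $J_k$, and Floer trajectories $u_k$ of $(H'_k,J_k)$ running from a capped orbit of $\cO(H'_k,\ol{\mf F}_i)$ to one of $\cO(H'_k,\ol{\mf F}_j)$ with $i\ne j$, and with $E(u_k)\to 0$; after passing to a subsequence the pair $(i,j)$ is fixed. Choose an open set $W$ with $V_{\cF_i}\Subset W\Subset U_{\cF_i}$. By the refined first assertion, for $k$ large $u_k$ enters $V_{\cF_i}$ as $s\to-\infty$ and enters $V_{\cF_j}\subset M\setminus\overline{U_{\cF_i}}$ as $s\to+\infty$, so by connectedness of $\R\times\rS^1$ there is $(s_k,t_k)$ with $u_k(s_k,t_k)\in\partial W$. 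Replacing $u_k$ by $v_k(s,t)=u_k(s+s_k,t)$, one has $E(v_k)\to 0$ and $v_k(0,t_k)\in\partial W$. Now let $\hbar>0$ be a uniform lower bound for the symplectic area of a non-constant $J$-holomorphic sphere, with $J$ ranging over a compact neighbourhood in $\cJ(M,\om)$ of the limit of a subsequence of the $J_k$; such $\hbar$ exists because $(M,\om)$ is monotone, negative monotone, or symplectically Calabi--Yau. For $k$ large, $E(v_k)<\hbar$, which excludes bubbling and, together with the uniform energy bound, gives a uniform $C^1_{\mathrm{loc}}$ bound on $v_k$; elliptic bootstrapping and Arzel\`{a}--Ascoli then yield, after a further subsequence with $t_k\to t_\infty$, a $C^\infty_{\mathrm{loc}}$ limit $v_\infty$ solving the Floer equation for $(H,J_\infty)$ with $E(v_\infty)\le\liminf E(v_k)=0$. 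Hence $\partial_s v_\infty\equiv 0$, i.e.\ $v_\infty(s,t)=y(t)$ for a $1$-periodic orbit $y$ of $H$; but $y(t_\infty)=v_\infty(0,t_\infty)\in\partial W\subset U_{\cF_i}$, so, $U_{\cF_i}$ being isolating, $y$ belongs to the generalized orbit $\mf F_i$ and therefore takes values in $V_{\cF_i}$, contradicting $\partial W\cap V_{\cF_i}=\emptyset$. This contradiction proves the lemma.

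The hard part will be the compactness step — extracting the uniform $C^1_{\mathrm{loc}}$ bound and ruling out bubbling while the auxiliary data $(H'_k,J_k)$ vary. The standard remedy is to pass to a subsequence along which $J_k$ converges and to exploit that the energy quantum $\hbar$ is uniform over a compact set of almost complex structures, so that the usual rescaling argument (any blow-up of the gradient would produce a non-constant $J$-holomorphic sphere of area $\ge\hbar$) applies verbatim. A further technical point, in the negative monotone and Calabi--Yau cases, is that the Floer differential there is defined via virtual perturbations; however the crossing-energy estimate concerns genuine unperturbed solutions, whose energies and compactness behaviour are untouched, and the virtual perturbation may be taken small enough not to affect the conclusion.
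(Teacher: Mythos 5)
Your proposal is correct and takes essentially the same approach as the paper: argue by contradiction with trajectories of energy $E(u_k)\to 0$, extract a limit solution of the Floer equation for $H$ with zero energy, hence an $s$-independent $1$-periodic orbit, and contradict the isolation of the generalized fixed point sets. The paper delegates the compactness step to citations (Fish's compactness, McLean) where you spell out the standard anchoring-at-$\partial W$, bubbling-exclusion and bootstrapping argument, but the underlying proof is the same.
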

\begin{proof}
Suppose there exists a sequence of non-degenerate Hamiltonians $\{H'_{k}\}$ $C^2$-convering to $H$ and a sequence of Floer trajectories $u_{k}$ of $H'_{k}$ connecting orbits in distinct isolating neighbourhoods such that $E(u_{k})\ra 0$. Since $H$ has finitely many generalized fixed points, we may suppose without loss of generality that all the Floer trajectories $u_{k}$ connect orbits in $U_{\cF}$ to orbits in $U_{\cF'}$, where $\cF,\cF'\in\pi_0(\fix(\phi_{H}))$.


By a compactness result of \cite{Fish-compactness} and arguing as in \cite{McLean-geodesics} we obtain the existence of a Floer trajectory $u$ of $H$ connecting an orbit in $U_{\cF}$ to an orbit in $U_{\cF'}$ such that $E(u) = 0$. Thus  
\begin{equation*}
\frac{\partial u}{\partial s} = 0 \qquad \text{and} \qquad \frac{\partial u}{\partial t} = X_{H_t}
\end{equation*}
\noindent which in turn, implies that for each $s$, the loop $u_s=u(s,.)$ is a 1-periodic orbit of $H$. This contradicts the fact that the generalized fixed points of $H$ are isolated.
\end{proof}

\subsubsection{Decomposition of Floer differential}\label{subsubsec: decomposition}
An important feature related to local Floer homology concerns the decomposition of the full differential defined on the complex $CF_*(H')$ in to the sum of local differentials of complexes $CF_*(H,\ol\fF_j)$ -- for all the different lifts of the finitely many critical sets --  and in to an additional component we shall call $D$. Note that here $H'$ is a close enough non-degenerate Hamiltonian in the aforementioned sense. We mean that for a chain $\sigma\in CF_*(H')$ we have 
\begin{equation}
\partial\sigma = \sum\til\partial_j\sigma + D\sigma
\end{equation}
where, $\til\partial_j$ represents an extension of the local differential of the complex $CF_*(H,\ol\fF_j)$ obtained by setting $\til\partial_j\bar{x}=0$ for every capped orbit which does not belong $\cO(H',\ol{\fF_j})$. Loosely speaking, $D$ only ``counts" Floer trajectories connecting orbits contained in disjoint isolating open sets $U_{\cF_j}$.

\begin{rmk}
	Suppose $\sigma$ is a chain in the complex $CF_*(H')$ and $\Bar{z}$ is an orbit entering $D\sigma$. Naturally, there exists $0\leq k \leq m$ such that $\Bar{z}\in CF_*(H,\ol\fF_k)$ for a particular lift of $\fF_k$ and a Floer trajectory $u$ connecting an orbit $\Bar{y}\in CF_*(H,\ol\fF_l)$ to $\bar{z}$ for $l\neq k$ (and a particular lift of $\fF_l$). We then obtain 
\begin{equation}
\cA_{H'}(\bar{z}) =\cA_{H'}(\bar{y}) - E(u) <  \cA_H'(\bar{y}) - \delta
\end{equation}
where the first equality comes from the fact that the energy of a Floer trajectory connecting two capped orbits is equal to their action difference and the $\delta$ comes from the uniform lower bound for the crossing energy from Lemma \ref{crossing_energy}. In other words \[ \cA_{H'}(D x) < \cA_{H'}(x) - \delta\] for all $x \neq 0$ in $CF_*(H').$ 
\end{rmk}

\subsubsection{Barcodes of Hamiltonian diffeomorphisms}\label{subsubsec: barcodes}

The proof of Theorem \ref{thm: torsion is PR} uses notions and results regarding barcodes of Hamiltonian diffeomorphisms, in the case where they have a finite number of contractible generalized fixed points. Hitherto, this theory was developed mostly for the case where the generalized fixed points are in fact points, yet given Lemma \ref{crossing_energy}, all relevant results generalize to our situation. In the next section we describe the main Smith-type inequality regarding the behavior of barcodes under iteration.

We summarize the properties necessary for us as follows, and refer to \cite{PolShe,PolSheSto, UsherZhang, KS-bounds, S-Zoll, S-HZ} for further discussion of this notion, in the context of continuity in the Hofer distance and the spectral distance in particular. For convenience, we work in the setting of monotone symplectic manifolds, yet natural analogues of various statements exist in the semi-positive, rational, and general settings.

\begin{prop}\label{prop: barcodes}
Let $(M,\om)$ be a monotone symplectic manifold with $\cl{P}_{\om} = \rho \cdot \Z,$ $\phi \in \Ham(M,\om)$ with $\fix(\phi)$ consisting of a finite number of generalized fixed points. Let $\bK$ be a coefficient field. Let $H$ be a Hamiltonian generating $\phi.$ Then $\Spec(H) \subset \R$ is a discrete subset, and there exists a countable collection \[\cl{B}(H) = \cl{B}(H; \bK) = \{(I_i,m_i)\}_{i \in \cl{I}}\] of intervals $I_i$ in $\R$ of the form $I_i = (a_i,b_i]$ or $I_i = (a_i,\infty),$ called {\bf bars} with multiplicities $m_i \in \Z_{>0}$ such that the following properties hold:

\begin{enumerate}[label = (\roman*)]
\item The group $\rho \cdot \Z$ acts on $\cl{B}(H)$ in the sense that for all $k \in \Z$ and all $(I,m) \in \cl{B}$ we have $(I+\rho k, m) \in \cl{B}.$
\item For each window $J = (a,b)$ in $\R,$ with $a,b \notin \spec(H),$ only a finite number of intervals $I$ with $(I,m) \in \cl{B}$ have endpoints in $J.$ Furthermore, \[ \dim_{\bK} HF(H)^{J} = \displaystyle\sum_{(I,m) \in \cl{B}(H),\; \# \partial I \cap J = 1} m,\] where for an interval $I = (a,b],$ $\partial I = \{a,b\},$ and for $I = (a, \infty),$ $\partial I = \{a\}.$
\item In particular for $a \in \Spec(H),$ and $\eps > 0$ sufficiently small, so that we have $(a-\eps, a+\eps) \cap \Spec(H) = \{a\},$ \[ \dim_{\bK} HF(H)^{(a-\eps, a+\eps)} = \displaystyle\sum_{(I,m) \in \cl{B}(H),\; a \in \partial I} m,\] \[ \dim_{\bK} HF(H)^{(a-\eps, a+\eps)} = \displaystyle \sum_{ \cl{A}(\ol{\mf{F}}) = a } \dim_{\bK} HF^{\loc}(H, \ol{\mf{F}}).\]

\item There are $K(\phi,\bK)$ orbits of finite bars counted with multiplicity, and $B(\bK)$ orbits of infinite bars counted with multiplicity, under the $\rho \cdot \Z$ action on $\cl{B}(H).$ These numbers satisfy: \[B(\bK) = \dim_{\bK} H_*(M;\bK)\] and \[ N(\phi,\bK) = 2 K(\phi,\bK) + B(\bK),\] where \[ N(\phi,\bK) = \sum \dim_{\bK} HF^{\loc}(\phi, \cl{F})\] is the {\bf homological count of the fixed points of $\phi,$} the sum running over all the set $\pi_0(\fix(\phi))$ of its generalized fixed points.

\item There are $K(\phi,\bK)$ {\bf bar-lengths} corresponding to the finite orbits, \[ 0 < \beta_1(\phi,\bK) \leq \ldots \leq \beta_{K(\phi,\bK)}(\phi,\bK),\] which depend only on $\phi.$ We call \[ \beta(\phi,\bK) = \beta_{K(\phi,\bK)}(\phi,\bK) \] the {\bf boundary-depth} of $\phi,$ and \[ \beta_{\mrm{tot}}(\phi,\bK) = \sum_{1 \leq j \leq {K(\phi,\bK)}} \beta_j(\phi,\bK)\] its {\bf total bar-length}.
\item Each spectral invariant $c(\alpha, H) \in \Spec(H)$ for $\alpha \in QH_*(M) \setminus \{0\}$ is a starting point of an infinite bar in $\cl{B}(H),$ and each such starting point is given by a spectral invariant\footnote{In fact, representatives for the set of  orbits of infinite bars counting with multiplicity, can be obtained as spectral invariants of an orthogonal basis of $QH_*(M)$ over the Novikov field $\Lambda_{\bK},$ with respect to the non-Archimedean filtration $l_H(-) = c(-,H).$ As we shall not require this stronger statement, we refer to \cite{S-HZ, S-Zoll} for a discussion of the relevant notions.}.

\item \label{barcodes, change of H} If $H'$ is another Hamiltonian generating $\phi,$ then $\cl{B}(H') = \cl{B}(H)[c],$ for a certain constant $c \in \R,$ where $\cB(H)[c] = \{ (I_i-c,m_i)\}_{i \in \cl{I}}.$ 

\item \label{barcodes, change of coefficients} If $\mathbb{\bK}$ is a field extension of $\mathbb{F},$ and $H$ is a Hamiltonian, then $\cl{B}(H; \bK) = \cl{B}(H; \bF).$ In particular $\cl{B}(H; \bK) = \cl{B}(H; \F_p)$ if $\mrm{char}(\bK) = p,$ and $\cl{B}(H; \bK) = \cl{B}(H; \Q)$ if $\mrm{char}(\bK) = 0.$

\end{enumerate}

\end{prop}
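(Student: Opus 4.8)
The plan is to deduce the whole proposition from two inputs: the homotopy-canonical strict complex $(C(H),d_H)$ of Theorem \ref{thm: homological perturbation complex}, which already presents the filtered Floer homology of $H$ as $HF(H)^{<a} = H(C(H)^{<a})$ with, at each action level $a$, associated graded $\bigoplus_{\cl{A}_H(\ol{\mf{F}}) = a} HF^{\loc}(H,\ol{\mf{F}})$; and the normal-form (singular-value) decomposition of finite-type filtered complexes over a field in the $\rho\Z$-periodic Novikov setting, as developed in \cite{UsherZhang, PolShe, KS-bounds, S-HZ}. Granting these, the proposition is essentially bookkeeping; the only genuinely new point is that all the cited machinery, originally written for $\phi$ with isolated fixed points, survives verbatim for finitely many compact path-connected generalized fixed points, which I discuss at the end.

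First I would record the discreteness of $\Spec(H)$: by Proposition \ref{existence_of_lifts} and the argument preceding the definition of the visible spectrum, $\cl{A}_H$ is constant on each lift $\ol{\mf{F}}$, so by the recapping formula $\cl{A}_H(\ol{\mf{F}}\#A) = \cl{A}_H(\ol{\mf{F}}) - \langle [\om], A\rangle$ and monotonicity $\cl{P}_\om = \rho\Z$ the spectrum is the finite union $\bigcup_{\cl{F}}\big(\cl{A}_H(\ol{\mf{F}}_{\cl{F}}) + \rho\Z\big)$ of cosets of $\rho\Z$, hence discrete. Applying the normal-form decomposition to $(C(H),d_H)$ then produces the $\rho\Z$-equivariant collection $\cl{B}(H)$: finite bars record the nonzero singular values (action gaps of paired $\Lambda_{\bK}$-basis elements) and infinite bars a $\Lambda_{\bK}$-basis of the surviving homology. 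Property (i) is the Novikov ($q$-action) equivariance; (ii) and (iii) follow from $HF(H)^{<a} = H(C(H)^{<a})$ together with the count of bar endpoints in the normal form and the associated-graded identification in Theorem \ref{thm: homological perturbation complex}; (iv) is then pure linear algebra, since an orbit of a finite bar consumes two $\Lambda_{\bK}$-generators and an orbit of an infinite bar one, so $N(\phi,\bK) = \rank_{\Lambda_{\bK}} C(H) = \sum_{\cl{F}} \dim_{\bK} HF^{\loc}(\phi,\cl{F}) = 2K(\phi,\bK) + B(\bK)$ while $B(\bK) = \rank_{\Lambda_{\bK}} H(C(H),d_H) = \rank_{\Lambda_{\bK}} QH(M;\Lambda_{\bK}) = \dim_{\bK} H_*(M;\bK)$; and (v) just names the finite bar-lengths, whose dependence on $\phi$ alone I settle below.

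For the remaining properties: (vi) holds because $c(\alpha,H)$ is by definition the least $a$ at which $\alpha \in QH(M;\Lambda_{\bK})\setminus\{0\}$ lies in the image of $HF(H)^{<a} \to HF(H)$, which in the normal form is precisely the starting point of an infinite bar, while the converse, realizing every infinite-bar start via an orthogonal basis of $QH(M;\Lambda_{\bK})$ for the filtration $c(-,H)$, I would quote from \cite{Usher-spec, S-HZ}. For (vii), if $H'$ also generates $\phi$, the flows of $H$ and $H'$ represent classes of $\til{\Ham}(M,\om)$ over $\phi$ differing by a loop, and the filtered complexes are isomorphic up to a uniform shift of $\cl{A}$ by the Seidel-representation action of that loop (cf.\ \cite{seidelInvertibles}), so $\cl{B}(H') = \cl{B}(H)[c]$; combined with the $\til\phi$-invariance of filtered Floer homology recalled in Section \ref{sec: prelims} and the shift-invariance of bar-lengths, this also closes (v). Property (viii) is the standard base-change invariance of the normal form under a field extension, which is defined over the prime field, so $\cl{B}(H;\bK)$ depends on $\bK$ only through $\mathrm{char}(\bK)$.

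The main, and really the only, obstacle is the passage from isolated fixed points to finitely many compact path-connected generalized fixed points, i.e.\ verifying that all the persistence-type arguments apply \emph{mutatis mutandis}. This rests on two facts already in hand: the crossing-energy estimate Lemma \ref{crossing_energy}, which supplies a uniform $\delta > 0$ below which no Floer trajectory connects distinct isolating neighborhoods, hence the decomposition $\partial = \sum \til\partial_j + D$ of Section \ref{subsubsec: decomposition} with $\cl{A}_{H'}(Dx) < \cl{A}_{H'}(x) - \delta$; and Theorem \ref{thm: homological perturbation complex}, which packages exactly this into the strict complex $(C(H),d_H)$ built on the generalized local Floer homologies of Section \ref{subsubsection:gen_local_FH}. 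Given these, every appearance of a pointwise local Floer homology in \cite{UsherZhang, PolShe, S-HZ, S-PRQS} is replaced by its generalized counterpart with no change in the arguments, and I would flag this substitution — notably in the associated-graded identification feeding (iii) — rather than reproduce the proofs.
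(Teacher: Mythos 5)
Your proposal is correct and follows exactly the route the paper intends: the paper gives no independent proof of Proposition~\ref{prop: barcodes}, but presents it as a summary of the barcode theory of \cite{UsherZhang,PolShe,KS-bounds,S-HZ,S-Zoll}, generalized from isolated fixed points to generalized fixed points via the crossing-energy Lemma~\ref{crossing_energy}, the differential decomposition of Section~\ref{subsubsec: decomposition}, and the canonical complex of Theorem~\ref{thm: homological perturbation complex} --- precisely the two inputs you organize your argument around. Your item-by-item derivation (normal form of $(C(H),d_H)$ for (i)--(vi), loop-action/Seidel shift plus $\til\phi$-invariance for (vii), base change for (viii)) is a faithful and accurate expansion of that intended argument.
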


\subsubsection{Smith theory in filtered Floer homology}

One of the main fundamental results of \cite{S-HZ} is the following Smith-type inequality, that readily adapts to our setting by Lemma \ref{crossing_energy} and its generalization to the situation of branched covers of the cylinder as in \cite[Proposition 9]{SZhao-pants}. We refer to \cite[Theorem D]{S-HZ} for a detailed argument in the case of isolated fixed points, and observe that our generalization below is formulated in such a way that the same proof applies verbatim, by replacing fixed points by generalized fixed points everywhere.

\begin{thm}\label{thm: Smith}
Let $(M,\om)$ be a monotone symplectic manifold, $p$ a prime number, and $\phi \in \Ham(M,\om)$ with $\fix(\phi),$ $\fix(\phi^p)$ each consisting of a finite number of generalized fixed points, and such that the natural inclusion $\fix(\phi) \to \fix(\phi^p)$ restricts to a homeomorphism from each generalized fixed point $\cl{F}$ of $\phi$ to a generalized fixed point of $\phi^p,$ which we denote $\cl{F}^{(p)}.$ Then \[ \beta_{\mrm{tot}}(\phi^p, \F_p) \geq p \cdot \beta_{\mrm{tot}}(\phi, \F_p).\]
\end{thm}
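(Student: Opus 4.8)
The plan is to run the Smith-theoretic argument of \cite[Theorem D]{S-HZ}, using Lemma \ref{crossing_energy} and its extension to branched covers of the cylinder (as in \cite[Proposition 9]{SZhao-pants}) in place of the non-degeneracy inputs of the original proof, so that every moduli-theoretic construction localizes near the generalized fixed points exactly as it did near isolated ones. Concretely, writing $\psi = \phi^p$, I would first form the $\Z/p\Z$-equivariant filtered Floer complex of $\psi$, built from the degree-$p$ branched cover of the cylinder with its cyclic symmetry and the associated equivariant pair-of-pants operations \cite{Seidel-pants, SZhao-pants, S-HZ}. This is a free complex over a Tate-type coefficient ring $\Lambda^{eq}$ modelled on $H^*(B\Z/p\Z;\F_p)$, with the equivariant parameters placed in filtration level $0$; its reduction modulo the equivariant parameters recovers the ordinary complex of $\psi$, and its $\F_p$-action filtration is induced from that on $CF(\psi;\F_p)$. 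Lemma \ref{crossing_energy} supplies a uniform crossing-energy bound for the branched-cover moduli spaces, so this equivariant complex splits, up to a strictly filtration-decreasing remainder $D$ of the type in Section \ref{subsubsec: decomposition}, into local pieces indexed by $\pi_0(\fix(\psi))$.

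The core input is the localization theorem: after inverting the equivariant parameter $u$, the equivariant homology of $\psi$ is computed by the fixed locus of the $\Z/p\Z$-action, which consists precisely of those orbits of $\psi$ that are $p$-th iterates of orbits of $\phi$. Here the hypothesis that $\fix(\phi)\hookrightarrow\fix(\phi^p)$ is a componentwise homeomorphism is essential: it guarantees that every generalized fixed point of $\psi$ arises this way, so the fixed locus of the cyclic action is modelled on $\fix(\phi)$ itself. Consequently the localized equivariant complex is filtered-isomorphic to $CF(\phi;\F_p)\otimes\Lambda^{eq}[u^{-1}]$, with differential $d_\phi$ plus corrections of higher order in $u$, but with the crucial rescaling that a capped generalized orbit $\ol{\mf F}$ of $\phi$ at action level $a$ contributes at level $p\cdot a$, since $\cA_{H^{(p)}}(\ol{\mf F}^{(p)}) = p\,\cA_H(\ol{\mf F})$ by the iteration formula.

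It then remains to extract the numerical inequality, following the combinatorial heart of \cite[Theorem D]{S-HZ}. The equivariant complex interpolates between two filtered $\F_p$-complexes: modulo the equivariant parameters it is the ordinary complex of $\psi$, of total bar-length $\beta_{\mrm{tot}}(\phi^p,\F_p)$, while after inverting $u$ it becomes the complex of $\phi$ with action filtration scaled by $p$, whose finite-bar content is $p\cdot\beta_{\mrm{tot}}(\phi,\F_p)$. A filtered-algebraic comparison — using that the equivariant differential differs from $d_\psi\otimes 1$ only by action-decreasing terms (again via Lemma \ref{crossing_energy}) and that localization is a filtered quasi-isomorphism after inverting $u$ — then yields $\beta_{\mrm{tot}}(\phi^p,\F_p)\geq p\cdot\beta_{\mrm{tot}}(\phi,\F_p)$.

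I expect the main obstacle to be entirely technical: verifying that the equivariant moduli spaces entering the equivariant differential and the localization map are regular and compact in this Morse--Bott-type situation, where the fixed ``points'' are connected families rather than points. This is precisely what Lemma \ref{crossing_energy}, the branched-cover compactness of \cite{SZhao-pants}, and the generalized local Floer theory of Section \ref{subsubsection:gen_local_FH} are set up to handle; once these are in place, the algebra of \cite[Theorem D]{S-HZ} applies without modification.
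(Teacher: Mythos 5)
Your proposal takes essentially the same route as the paper: the paper's proof of Theorem \ref{thm: Smith} consists precisely of invoking the $\Z/p\Z$-equivariant argument of \cite[Theorem D]{S-HZ}, noting that Lemma \ref{crossing_energy} together with its branched-cover analogue \cite[Proposition 9]{SZhao-pants} allows that proof to be repeated verbatim with generalized fixed points in place of isolated ones. Your outline of the equivariant complex, the localization after inverting $u$ with fixed locus modelled on $\fix(\phi)$, the action rescaling by $p$, and the filtered bar-length comparison is a faithful sketch of that argument, so it matches the paper's approach.
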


This inequality will be the key component in the proof of Theorem \ref{thm: torsion is PR}.

A somewhat simpler statement than Theorem \ref{thm: Smith}, is the Smith inequality in generalized local Floer homology, whose proof is precisely as in \cite{SZhao-pants} together with the crossing energy argument of Lemma \ref{crossing_energy}.

\begin{prop}\label{thm: Smith loc}
Let $(M,\om)$ be a closed symplectic manifold, $p$ a prime number, and $\phi \in \Ham(M,\om).$ Suppose that $\fix(\phi),$ $\fix(\phi^p)$ each consist of a finite number of generalized fixed points. Let $\cl{F}$ be a generalized fixed point of $\phi,$ such that the natural inclusion $\fix(\phi) \to \fix(\phi^p)$ restricted to $\cl{F}$ is a homeomorphism onto $\cl{F}^{(p)}.$ Then \[ \dim_{\F_p} HF^{\loc}(\phi,\cl{F}) \leq \dim_{\F_p} HF^{\loc}(\phi^p,\cl{F}^{(p)}).\]
\end{prop}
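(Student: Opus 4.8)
The plan is to obtain this as a localized, unfiltered version of the Smith-theoretic structure on Hamiltonian Floer homology of Seidel and Shelukhin--Zhao \cite{Seidel-pants,SZhao-pants}, the localization to neighbourhoods of $\cl{F}$ and $\cl{F}^{(p)}$ being supplied by the crossing-energy estimate of Lemma~\ref{crossing_energy} in the branched-cover form of \cite[Proposition~9]{SZhao-pants}. Since only total dimensions are compared and the action filtration plays no role, none of the barcode machinery of Section~\ref{subsubsec: barcodes} is needed, and the argument is far shorter than that of Theorem~\ref{thm: Smith}: it is the local Smith inequality of \cite{SZhao-pants} with two purely cosmetic changes, namely that isolated fixed points are replaced by generalized (path-connected) fixed points, and that all moduli spaces are cut down to an isolating neighbourhood, so that the $\zt$-graded groups $HF^{\loc}(\phi,\cl{F})$ and $HF^{\loc}(\phi^p,\cl{F}^{(p)})$ of Remark~\ref{rmk:generalized_localFH_recapping} are the objects being compared.

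\textbf{Step 1: a local $\Z/p\Z$-equivariant complex.} Fix a Hamiltonian $H$ generating $\phi$, so that $H^{(p)}$ generates $\phi^p$, and choose an isolating neighbourhood $U_{\cl{F}} \subset \rS^1 \times M$ of the generalized $1$-periodic orbit $\mf{F}$ associated to $\cl{F}$. Because the inclusion $\fix(\phi) \hookrightarrow \fix(\phi^p)$ restricts to a homeomorphism of $\cl{F}$ onto $\cl{F}^{(p)}$, the reparametrization $t \mapsto pt$ turns $U_{\cl{F}}$ into an isolating neighbourhood $U_{\cl{F}^{(p)}}$ of the $p$-th iterate $\mf{F}^{(p)}$. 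Following \cite{SZhao-pants} (compare \cite{Seidel-pants,S-HZ}), choose a $\Z/p\Z$-equivariant Floer datum on the $p$-fold branched cover of the cylinder attached to $H^{(p)}$ and, after a $C^2$-small non-degenerate perturbation, form the finite-dimensional $\F_p$-complex $C := CF^{\loc}(\phi^p,\cl{F}^{(p)})$ together with its chain-level $\Z/p\Z$-action. The crucial point is that the branched-cover crossing-energy estimate, i.e. \cite[Proposition~9]{SZhao-pants} — whose proof transcribes word for word to generalized fixed points once Lemma~\ref{crossing_energy} is available — gives a uniform positive lower bound on the energy of any, possibly branched, Floer cylinder leaving $U_{\cl{F}^{(p)}}$. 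Hence for a sufficiently small perturbation the equivariant differential, the equivariant continuation maps, and the localization maps of Step~2 all remain inside $C$ and are independent of the auxiliary choices.

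\textbf{Step 2: localization and the algebraic Smith inequality.} As in the global case treated in \cite{SZhao-pants}, the equivariant branched pair-of-pants maps provide a localization isomorphism between the $u$-inverted $\Z/p\Z$-equivariant homology of $C$ and $HF^{\loc}(\phi,\cl{F})$ extended from $\F_p$ to the Tate coefficient ring of $\Z/p\Z$, namely $\F_p[u,u^{-1}]$, tensored with the exterior algebra on a degree-one generator when $p$ is odd. On the other hand, the algebraic Smith inequality applied to the finite $\F_p$-complex $C$ with its $\Z/p\Z$-action — exactly as in \cite{SZhao-pants} — bounds the rank over the Tate ring of this localized equivariant homology by $\dim_{\F_p} H_*(C) = \dim_{\F_p} HF^{\loc}(\phi^p,\cl{F}^{(p)})$. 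Combining the two inequalities yields $\dim_{\F_p} HF^{\loc}(\phi,\cl{F}) \le \dim_{\F_p} HF^{\loc}(\phi^p,\cl{F}^{(p)})$.

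\textbf{The main obstacle.} The only genuinely new input beyond \cite{SZhao-pants} is the confinement: one must verify that the uniform energy bound survives passage to the $p$-fold branched cover of the cylinder, i.e. that branch points do not spoil it, and that it applies when $\cl{F}$ is merely path-connected rather than a point. Both follow from Lemma~\ref{crossing_energy} by the usual compactness argument: a sequence of branched Floer cylinders of vanishing energy escaping $U_{\cl{F}^{(p)}}$ would, by the compactness results used in the proof of Lemma~\ref{crossing_energy}, degenerate to a configuration containing a non-constant $1$-periodic orbit of $H$ lying outside $\cl{F}$, contradicting the isolation of the generalized fixed points. Once this is in place, the equivariant construction and the algebraic Smith inequality are formal, and in particular the argument never uses that $\phi$ has finite order.
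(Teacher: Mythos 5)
Your proposal is correct and follows essentially the same route as the paper, which proves this proposition by invoking the local Smith inequality argument of Shelukhin--Zhao \cite{SZhao-pants} verbatim, with the only new ingredient being the crossing-energy estimate of Lemma \ref{crossing_energy} (in its branched-cover form as in \cite[Proposition 9]{SZhao-pants}) to confine everything to the isolating neighbourhoods of the generalized fixed points. Your elaboration of the equivariant local complex, the localization isomorphism, and the algebraic Smith inequality is exactly the content the paper delegates to \cite{SZhao-pants}.
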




\subsubsection{Quantum Steenrod operations}

Quantum Steenrod operations are remarkable algebraic maps \[QSt_p: QH^*(M;\F_p) \to QH^*(M; \F_p)[[u]]\langle\theta\rangle,\] for $p$ a prime number, $u$  a formal variable of degree $2,$ and $\theta$  a formal variable of degree $1.$ As is the usual quantum product, $QSt_p$ are essentially defined by certain counts of configurations consisting of holomorphic curves in $M$ incident with negative gradient trajectories of Morse functions in $M.$ The main difference, is that $QSt_p$ uses $p$ inputs and $1$ output trajectories, and the counts are carried out in families parametrized by the classifying space $B(\Z/p\Z)$ of $\Z/p\Z.$ The investigation of the enumerative significance of these counts, in terms of various Gromov-Witten invariants, and its implications for Mirror Symmetry was started in \cite{Seidel-formal} and \cite{SeidelWilkins}.

These opertations were first proposed by Fukaya \cite{Fukaya-Steenrod}, and were formally introduced for $p=2$ by Wilkins in \cite{Wilkins}. They were then studied in \cite{Wilkins-PSS} in relation to the equivariant pair-of-pants product of Seidel \cite{Seidel-pants}. For a definition for $p>2$ odd, see \cite{Seidel-formal} and \cite{SSW}. The significance of quantum Steenrod operations in Hamiltonian dynamics was first observed in \cite{S-PRQS}, and was further investigated in \cite{CGG2},\cite{S-PRQSR}, and \cite{SSW}. While for the moment, these operations are defined in the setting of monotone symplectic manifolds, it is expected that they will be generalized to the semi-positive (also called weakly monotone) setting.

One particular property of quantum Steenrod operations that we use in this paper, which was first observed in \cite{S-PRQS} for $p=2,$ and proved in \cite{SSW} for $p>2,$ is that whenever \begin{equation}\label{eq: point class deformed} QSt_p(\mu) \neq u^{(p-1)n} \mu,\end{equation} where $\mu \in H^{2n}(M;\F_p)$ is the cohomology class Poincar\'{e} dual to the point class, the symplectic manifold $(M,\om)$ is geometrically uniruled: for each $\om$-compatible almost complex structure $J$ on $M,$ and each point $x \in M,$ there exists a $J$-holomorphic sphere $u: \C P^1 \to M,$ such that $x \in \im(u).$ Hence, we call a (monotone) symplectic manifold {\em $\F_p$ Steenrod uniruled} if condition \eqref{eq: point class deformed} holds. The algebraic significance of this condition is that $u^{(p-1)n} \mu = St_p(\mu),$ where $St_p$ is (a slightly reformulated) total Steenrod $p$-th power of the class $\mu,$ and in general \[QSt_p = St_p + O(q),\] where $O(q)$ is a collection of terms involving $q$ to power at least $1.$ These terms correspond to configurations involving $J$-holomorphic spheres of positive symplectic area, hence condition \eqref{eq: point class deformed} means that the quantum Steenrod power of the point cohomology class is {\em deformed} by holomorphic spheres.

\subsection{Floer cohomology}\label{subsec: Floer coh}

At times it shall be convenient to work with Floer cohomology and quantum cohomology of closed symplectic manifolds, instead of homology. All the preliminary results above adapt naturally to this setting. In fact, we may define \[CF^*(H,J) = CF_{n-*}(\ol{H},\ol{J}) \] where $\ol{H}(t,x) = - H(1-t,x),$ $\ol{J}_t(x) = J_{1-t}(x).$ The usual action functional $\cA_{H}$ on the left hand side, takes the form $(-\cA_{\ol{H}})$ on the right hand side. Note that hereby the cohomological differential increases the filtration, the triangle inequality for spectral invariants has the opposite inequality, and infinite bars in the barcode are of the form $(-\infty, b).$ Local Floer cohomology is defined in the same way as for homology. Action carriers, and contribution to local Floer cohomology hold similarly: $c(\mu,H)$ for $\mu \in QH^{2n}(M)$ is {carried} by a capped generalized periodic orbit $\ol{\mf{F}}$ of $H,$ if in the same sense as for homology, $\ol{\mf{F}}$ is a {\em lowest} action term in a {\em highest} minimal action representative of the image $PSS_{H}(\mu)$ of $\mu$ under the PSS isomorphism \cite{PSS} from the quantum cohomology $QH^*(M) \to HF^*(H)$ to the filtered Floer cohomology of the Hamiltonian $H.$ For $(M,\om)$ rational, in particular monotone, each non-zero class $\mu \in QH^*(M),$ and $H \in \cH$ with $\# \pi_0\fix(\phi^1_H) < \infty,$ $c(\mu,H)$ is carried by at least one generalized capped $1$-periodic orbit $\ol{\mf{F}}$ of $H.$ Furthermore, if $\mu$ is a homogeneous class of degree $k,$ and $\ol{\mf{F}}$ carries $c(\mu, H),$ then $HF^k_{\loc}(H,\ol{\mf{F}}) \neq 0.$ 

We refer to \cite{LeclercqZapolsky} for further discussion of the comparison between Floer homology and Floer cohomology.

\section{Cluster structure of the essential spectrum}

\begin{df}[Essential spectrum]
We define the essential spectrum of a Hamiltonian function $H$ as
\begin{equation*}
    \spec^{ess}(H) = \{\,c(\al,H)\,|\,\al\in QH(M)\setminus\{0\}\,\}.
\end{equation*}
Observe that spectraility property of the spectral invariants is equivalent to the inclusion $\spec^{ess}(H)\subset\spec(H)$. In fact, Lemma \ref{lma:carrier_spectral_invariant_genralized-QH} implies that $\spec^{ess}(H)\subset\spec^{vis}(H)$ for Hamiltonian diffeomorphisms with a finite number of (contractible) generalized fixed points. In the context of barcodes (see Section \ref{subsubsec: barcodes}), the essential spectrum corresponds to the endpoints of infinite bars of the barcode $\cl{B}(H)$ associated to the filtered Floer homology of $H.$
\end{df}

In what follows, we show that whenever $\gamma(H)<\rho$, the essential spectrum has a cluster structure determined by the subset produced by quantum homology classes of valuation 0. 

\begin{prop}\label{prop:cluster_struct_I}
    Suppose $M$ is a monotone symplectic manifold and $H$ a Hamiltonian function on $M$. Then,
    \begin{equation*}
        0\leq c([M],\til\phi_{H})-c(\al,\til\phi_{H})\leq \gamma(H)
    \end{equation*}
    for all $\al\in QH(M)$ such that $\nu(\al)=0,$ including all $\al\in H_*(M)\subset QH(M).$
\end{prop}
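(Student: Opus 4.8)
The plan is to derive both inequalities from the basic properties of spectral invariants — the triangle inequality, value at the identity, and the Novikov action property — combined with the duality packaged in the definition of $\gamma$. For the left inequality $c([M],\til\phi_H) \geq c(\al,\til\phi_H)$, I would use the non-Archimedean / module structure: since $\nu(\al)=0$, the class $\al$ is "unit-like" with respect to the Novikov filtration, and in fact $[M] = \beta \ast \al$ admits no such multiplicative relation in general, so instead I would argue via the fact that $[M]$ is the unit and $\al \in QH_*(M)$ with $\nu(\al)=0$ means $\al$ lies in the "valuation-zero part" $H_*(M;\bK) \subset QH(M;\Lambda_\bK)$. The cleanest route: write $\al = \sum_j a_j$ with the $a_j$ a basis of $H_*(M)$ and coefficients of valuation $\geq 0$, at least one equal to $0$; then by the non-Archimedean property $c(\al,\til\phi_H) \leq \max_j c(a_j, \til\phi_H)$, and each $c(a_j,\til\phi_H) \leq c([M],\til\phi_H)$ because $[M] = [M]\ast a_j$... hmm, that's not right either. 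Let me instead use the standard fact that $c([M], \til\phi_H) = \max\{ c(\al,\til\phi_H) : \al \in QH(M)\setminus 0\}$ is \emph{false}; rather the correct statement is that $c([M],\cdot)$ dominates $c(\al,\cdot)$ for $\al$ of valuation $0$ because one can complete $\al$ to an orthogonal basis whose top spectral invariant is $c([M],H)$ — this is exactly the footnote in Proposition \ref{prop: barcodes}. So the left inequality follows from: for any $\al$ with $\nu(\al) = 0$ there is a basis vector, call it the one carrying $c([M],H)$, such that $c(\al,H) \leq c([M],H)$; concretely $[M] \ast \al = \al$ is wrong, but $[M]$ being the unit gives $\al = [M]\ast \al$, hence by the triangle inequality $c(\al, \til\phi_H) = c([M]\ast \al, \til\phi_{H}\til{\id}) \leq c([M],\til\phi_H) + c(\al,\til{\id}) = c([M],\til\phi_H) - \rho\cdot\nu(\al) = c([M],\til\phi_H)$, using the value-at-identity property and $\nu(\al)=0$. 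That settles the left inequality cleanly.

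For the right inequality $c([M],\til\phi_H) - c(\al,\til\phi_H) \leq \gamma(H)$, I would again use the unit property but now with the \emph{inverse} flow. We have $[M] = [M] \ast [M]$, and more to the point $\al \ast \ol{\al}$-type relations are unavailable, so instead: apply the triangle inequality to $[M] = \al \ast \beta$ where $\beta$ is chosen so that $\al\ast\beta = [M]$ — this requires $\al$ invertible, which a general valuation-zero class need not be. So that fails in general. The robust argument: use $[M] = [M]\ast[M]$ and the two-sided estimate
\[
c([M],\til\phi_H) \leq c(\al,\til\phi_H) + c(\al^{?}, \til\phi_{\ol H}),
\]
which is not literally a consequence of the triangle inequality unless $\al$ has an inverse. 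The correct and standard move here is Poincaré duality for spectral invariants: for any $\al \in QH(M)$ one has $c(\al,\til\phi_H) + c(\al^\vee, \til\phi_{\ol H}) \geq 0$ where $\al^\vee$ is the Poincaré-dual class, with equality-type control, and when $\al$ has $\nu(\al)=0$ one gets $c(\al^\vee,\til\phi_{\ol H}) \leq c([M],\til\phi_{\ol H})$ by the left inequality applied to $\ol H$ (note $\nu(\al^\vee)=0$ as well). Combining: $-c(\al,\til\phi_H) \leq c(\al^\vee,\til\phi_{\ol H}) \leq c([M],\til\phi_{\ol H})$, whence $c([M],\til\phi_H) - c(\al,\til\phi_H) \leq c([M],\til\phi_H) + c([M],\til\phi_{\ol H}) = \gamma(H)$. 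I would also need the nonnegativity $c([M],\til\phi_H) - c(\al,\til\phi_H) \geq 0$, which is the left inequality just proved.

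The step I expect to be the main obstacle is pinning down the Poincaré-duality inequality $c(\al,\til\phi_H) + c(\al^\vee, \til\phi_{\ol H}) \geq 0$ in the generality needed (rational, possibly negative monotone or Calabi-Yau, with virtual cycles), and in particular verifying that the Poincaré dual $\al^\vee$ of a valuation-zero class is again valuation-zero — this is where one must be careful about the Novikov grading and the pairing. I would handle this by citing Usher's duality results \cite{Usher-duality} together with the value-at-identity and Novikov-action properties listed in the preliminaries, reducing everything to the case $\al \in H_*(M;\bK) \subset QH(M)$ where $\al^\vee$ is the ordinary Poincaré dual and the claim is transparent. The remaining inequalities are then formal manipulations with the six listed properties of spectral invariants, requiring no new input.
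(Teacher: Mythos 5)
Your argument is correct, and your proof of the left inequality is exactly the paper's: write $\al=\al\ast[M]$, apply the triangle inequality together with $c(\al,\til\id)=-\rho\cdot\nu(\al)=0$. Where you diverge is the right inequality. The paper obtains the needed bound $-c(\al,\til\phi_H)\leq c([M],\til\phi_{\ol H})$ with no duality at all: it applies the triangle inequality to the factorization $\til\id=\til\phi_H\til\phi_{\ol H}$ and the class $\al\ast[M]=\al$, so that $0=c(\al,\til\id)\leq c(\al,\til\phi_H)+c([M],\til\phi_{\ol H})$, and then adds $c([M],\til\phi_H)$. You instead reach the same intermediate inequality via Poincar\'e duality for spectral invariants, $c(\al,\til\phi_H)+c(\al^{\vee},\til\phi_{\ol H})\geq 0$, followed by the left inequality applied to $\ol H$ and $\al^{\vee}$. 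This is a valid route, but it imports heavier machinery (Entov--Polterovich/Ostrover/Usher duality) and creates the extra obligation you yourself flag: one must produce a dual class of valuation zero pairing non-trivially with $\al$. Note that your phrase ``reducing everything to the case $\al\in H_*(M;\bK)$'' is not quite a reduction of the proposition itself --- the statement for a general valuation-zero class does not formally follow from the classical case --- but the fix is easy: take $\al^{\vee}$ to be the ordinary Poincar\'e dual of the $q^0$-part of $\al$ (nonzero since $\nu(\al)=0$); the $q^0$-coefficient of the pairing with $\al$ is then the nondegenerate classical pairing, so the quantum pairing is nonzero, and $\nu(\al^{\vee})=0$ holds trivially. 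With that precision your proof goes through; the paper's version is simply shorter and more elementary, and it reserves Poincar\'e duality for where it is genuinely needed, namely the reverse inequality $\gamma(H)\leq l(H,\Gamma_H)$ in Lemma \ref{lma:fundamental_length}.
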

\begin{proof}
By the triangle-inequality and the value at identity properties of the spectral invariant  
\begin{equation*}
    c(\al,\til\phi_{H})=c(\al*[M],\til{\id}\,\til\phi_{H})\leq c(\al,\til\id)+c([M],\til\phi_{H})= c([M],\til\phi_{H})
\end{equation*}
for all $\al\in QH(M)$ such that $\nu(\al)=0$. In addition,
\begin{equation*}
    0=c(\al,\til\id)=c(\al*[M],\til\phi_{H}\til\phi_{\overline{H}})\leq c(\al,\til\phi_{H})+c([M],\til\phi_{\overline{H}}).
\end{equation*}
Combining both inequalities we obtain 
\begin{align*}
    0\leq c([M],\til\phi_{H})-c(\al,\til\phi_{H})\leq c([M],\til\phi_{H})+c([M],\til\phi_{\overline{H}})=\gamma(H),
\end{align*}
which concludes the proof of the proposition.
\end{proof}

\begin{prop}\label{prop:cluster_struct_II}
    Suppose $M$ is a monotone symplectic manifold with rationality constant $\rho$, $H$ a Hamiltonian function on $M$ with $\gamma(H)<\rho$ and $\al\in QH(M)$. Then, 
    \begin{equation*}
        c([M],\til\phi_{H})-\rho < c(\al,\til\phi_{H}) \leq c([M],\til\phi_{H})
    \end{equation*}
    if, and only if, $\nu(\al)=0$.
\end{prop}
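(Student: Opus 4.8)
The plan is to deduce the proposition from Proposition~\ref{prop:cluster_struct_I}, the \emph{Novikov action} property of spectral invariants, and the elementary fact that the valuation $\nu$ takes a finite value on every nonzero class of $QH(M)$ (which holds since $H_*(M;\bK)$ is finite dimensional).

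\emph{The ``if'' direction} will require nothing beyond Proposition~\ref{prop:cluster_struct_I}: if $\nu(\al)=0$, then that proposition gives $0\le c([M],\til\phi_H)-c(\al,\til\phi_H)\le\gamma(H)$, and since $\gamma(H)<\rho$ this is exactly the two-sided bound in the statement.

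\emph{For the ``only if'' direction} I would first reduce to the valuation-zero case. Given a nonzero $\al\in QH(M)$, set $k=\nu(\al)\in\Z$ and $\beta=q^{k}\al$, so that $\nu(\beta)=0$ and $\al=q^{-k}\beta$ with $\nu(q^{-k})=k$. The Novikov action property then yields $c(\al,\til\phi_H)=c(\beta,\til\phi_H)-\rho k$. Applying Proposition~\ref{prop:cluster_struct_I} to $\beta$, and using $\gamma(H)<\rho$ to make the lower bound strict, gives $c([M],\til\phi_H)-\rho<c(\beta,\til\phi_H)\le c([M],\til\phi_H)$, hence
\[
c([M],\til\phi_H)-\rho(k+1)<c(\al,\til\phi_H)\le c([M],\til\phi_H)-\rho k .
\]
Thus $c(\al,\til\phi_H)$ lies in this half-open window of length $\rho$ and, by hypothesis, also in the window $\bigl(c([M],\til\phi_H)-\rho,\ c([M],\til\phi_H)\bigr]$. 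Since two intervals of the form $(a-\rho,a]$ and $(b-\rho,b]$ can intersect only when $|a-b|<\rho$, and here $|a-b|=\rho|k|$, we conclude $|k|<1$, that is $k=\nu(\al)=0$.

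The argument is short and I do not anticipate a genuine obstacle: all of the analytic content is already packaged in Proposition~\ref{prop:cluster_struct_I}. The only point needing care is the endpoint bookkeeping for the two length-$\rho$ windows --- the hypothesis $\gamma(H)<\rho$ must be invoked to keep the relevant inequality strict, so that the integer $k$ cannot equal $\pm 1$ and is forced to be $0$.
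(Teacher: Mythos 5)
Your proof is correct. The ``if'' direction coincides with the paper's. For the ``only if'' direction you take a somewhat different route: the paper fixes a homogeneous basis $x_1,\dots,x_B$ of $H_*(M)\subset QH(M)$, observes via the Novikov action property that $c(x_kq^j,\til\phi_H)$ lies outside the window $(c-\rho,c]$ for $j\neq 0$, and then invokes the non-Archimedean property of spectral invariants to constrain the coefficients of a general class $\al=\sum\lambda_kx_k$ whose invariant lies in the window; you instead normalize the valuation globally, setting $\beta=q^{\nu(\al)}\al$ so that $\nu(\beta)=0$, apply Proposition~\ref{prop:cluster_struct_I} to $\beta$, transfer back by the Novikov action property, and conclude by the elementary fact that two half-open windows of length $\rho$ whose endpoints differ by $\rho|\nu(\al)|$ are disjoint unless $\nu(\al)=0$. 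Your version avoids both the basis decomposition and the non-Archimedean property, and in particular it handles general (non-monomial) classes in one stroke, where the paper's ``argue as before'' step is the place that property is actually needed; what it uses instead is the multiplicativity $\nu(q^k\al)=\nu(q^k)+\nu(\al)$ of the valuation extended to $QH(M)$, which is implicit in the paper's conventions (it already underlies the value-at-identity property), so this is a legitimate and arguably cleaner argument yielding the same statement.
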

\begin{proof}
If $\nu(\al)=0$, then Proposition \ref{prop:cluster_struct_I} and the hypothesis that $\ga(H)<\rho$ imply that
\begin{equation*}
        c([M],\til\phi_{H})-\rho < c(\al,\til\phi_{H}) \leq c([M],\til\phi_{H}).
\end{equation*}
Conversely, let $x_{1},\dots,x_{B}$ be a homogeneous basis of $H_*(M)\subset QH(M)$ and denote $c=c([M],\til\phi_{H})$. Then, by Proposition \ref{prop:cluster_struct_I}, we have $c(x_k,\til\phi_{H})\in (c-\rho,c]$ for all $1\leq k\leq B$. Also, for $q^{j}\in\Lambda$ the equality $c(x_{k}q^{j},\til\phi_{H})= c(x_{k},\til\phi_{H})+j\rho$ implies that $c(x_{k}q^{j},\til\phi_{H})\notin (c-\rho,c]$ for all $j\neq0$. Thus, if $c(\al,\til\phi_{H})\in (c-\rho,c]$ for
\begin{equation*}
    \al = \lambda x_k = \sum a_{j}q^{j}x_{k},
\end{equation*}

where $\lambda\in\Lambda$, the non-Archimedean property of the spectral invariant imposes that 
\begin{equation*}
    \al = a_0x_{k} + \sum_{j<0} a_{j}q^{j}x_{k},
\end{equation*}

which in turn implies $\nu(\al)=0$. In general, $\al\in QH(M)$ is of the form $\sum\lambda_{k}x_{k}$, where $\lambda_k=\sum a_{j}^{(k)}q^{j}$. Consequently, if $c(\al,\til\phi_{H})\in (c-\rho,c]$, we may argue as before to conclude

\begin{equation}\label{eq:cluster_struct_II}
    \al = \sum_{k} \bigg( a_{0}^{(k)}x_{k} + \sum_{j<0} a_{j}^{(k)}q^{j}x_{k}\bigg).
\end{equation}

Thus, $\nu(\al)=0$, which concludes the proof of the claim. 
\end{proof}

\begin{rmk}\label{rmk:cluster_struct_rational}
We would like to point out that the above propositions are valid, albeit with minor modifications to the proofs, in the more general case where $M$ is only assumed to be rational. If $M$ is negative monotone, then the base field $\bK$ is required to be of characteristic zero (see Remark \ref{rmk: virtual_cycle}).   
\end{rmk}

Let $\phi\in\Ham(M,\om)$ and suppose $\gamma(\phi)<\rho$. We can, therefore, find a Hamiltonian function $H$ generating $\phi$ such that $\gamma(H)<\rho$. Our goal is to extract information from the cluster structure of $H$ in order to bound $\gamma(\phi)$ from below. First we set notation. Put $\rS^ {1}_{\rho}=\R/\rho \cdot\Z$ and, for $a\in\R$, let $[a]\in\rS^{1}_{\rho}$ be its equivalence class. For $\theta\in\rS^{1}_{\rho}$, define 
\begin{equation*}
    \Gamma_\theta = \{(a-\rho,a]\,|\,a\in \R,\,[a]=\theta\}.
\end{equation*}

Note that the intervals in $\Gamma_\theta$ are disjoint and their union covers the real line. In addition, observe that, modulo $\rho\cdot\Z$, the set $\spec^{ess}(H)\cap \text{I}$ does not depend on the interval $\text{I}\in\Gamma_\theta$. 

\begin{df}[Spectral length]\label{df:spectral_length}
We define the $\theta$-\textit{parsed spectral length of} $H$ as 
\begin{equation*}
    l(H,\Gamma_\theta)= \diam(\spec^{ess}(H)\cap \text{I}) = \sup\{|a-b|\,\,|\,\,a,b\in\spec^{ess}(H)\cap \text{I}\,\},
\end{equation*}
where $I\in\Gamma_\theta$ is arbitrary. For $\Gamma_H = \Gamma_{[c([M],H)]}$ we call $l(H,\Gamma_{H})$ the \textit{fundamental length of} $H$. Finally, we define the \textit{spectral length} of $\phi\in\Ham(M,\om)$ as
\begin{equation}\label{eq:spec_length}
    l(\phi) = \inf\{\,l(H,\Gamma_{\theta})\,\,|\,\,\theta\in\rS^{1}_{\rho}\},
\end{equation}
where $H$ is any Hamiltonian function generating $\phi$. Note that the right-hand-side of (\ref{eq:spec_length}) does not depend on the choice of Hamiltonian. Indeed, if $H'$ is another Hamiltonian generating $\phi$, then by Proposition \ref{prop: barcodes} \ref{barcodes, change of H} $\spec^{ess}(H')= \spec^{ess}(H)+ c$ for $c\in\R$. (Another proof using Seidel elements is also possible.) 
\end{df}

\begin{rmk} We can also define an a priori larger invariant $l'(\phi) \geq l(\phi)$ of $\phi$ by $l'(\phi) = \inf_{\phi_H^1 = \phi} l(H,\Gamma_H).$ However, we find $l(\phi)$ more convenient for this paper. 
\end{rmk}


\begin{lma}\label{lma:fundamental_length}
The fundamental length of a Hamiltonian $H$ satisfies \[l(H,\Gamma_{H})\leq\gamma(H).\]If, in addition, $\gamma(H)<\rho$, then we have equality: \[l(H,\Gamma_{H}) = \gamma(H).\]
\end{lma}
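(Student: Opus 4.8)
The plan is to compare $\spec^{ess}(H)\cap I$ with the interval $[c-\gamma(H),c]$, where $c=c([M],\til\phi_H)$ and $I=(c-\rho,c]$ is the distinguished element of $\Gamma_H=\Gamma_{[c]}$. Recall that $l(H,\Gamma_H)=\diam(\spec^{ess}(H)\cap I)$, and that this diameter is the same for every $I\in\Gamma_H$: two such intervals differ by a translation in $\rho\cdot\Z$, while $\spec^{ess}(H)$ is $\rho\cdot\Z$-periodic, being the set of left endpoints of the infinite bars of $\cl{B}(H)$, on which $\rho\cdot\Z$ acts by Proposition \ref{prop: barcodes}.

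For the inequality $l(H,\Gamma_H)\le\gamma(H)$ I would split into two cases. If $\gamma(H)\ge\rho$ there is nothing to prove, since any subset of $I$ has diameter at most $\diam(I)=\rho\le\gamma(H)$. If $\gamma(H)<\rho$, then Proposition \ref{prop:cluster_struct_II} identifies
\[\spec^{ess}(H)\cap I=\{\,c(\al,\til\phi_H)\ :\ \al\in QH(M)\setminus\{0\},\ \nu(\al)=0\,\},\]
and Proposition \ref{prop:cluster_struct_I} places this set inside $[c-\gamma(H),c]$, an interval of length $\gamma(H)$; hence its diameter is at most $\gamma(H)$.

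For the reverse inequality under the hypothesis $\gamma(H)<\rho$ I would exhibit two points of $\spec^{ess}(H)\cap I$ at distance exactly $\gamma(H)$. The right end is attained by the unit: $c=c([M],\til\phi_H)\in\spec^{ess}(H)\cap I$ since $\nu([M])=0$. For the left end I would use the point class $[pt]\in H_*(M)\subset QH(M)$, which is nonzero; since $\nu([pt])=0$ one has $c([pt],\til\phi_H)\in\spec^{ess}(H)$, and by Poincar\'e duality for spectral invariants (see \cite{EntovPolterovich-rigid,Usher-duality})
\[c([pt],\til\phi_H)=-\,c([M],\til\phi_{\ol H})=c([M],\til\phi_H)-\gamma(H)=c-\gamma(H),\]
which lies in $(c-\rho,c]=I$ precisely because $0\le\gamma(H)<\rho$. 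Thus $c-\gamma(H)\in\spec^{ess}(H)\cap I$, so $\diam(\spec^{ess}(H)\cap I)\ge\gamma(H)$, and together with the previous paragraph this gives $l(H,\Gamma_H)=\gamma(H)$.

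The only input that is not purely formal is the Poincar\'e duality identity $c([pt],\til\phi_H)=-c([M],\til\phi_{\ol H})$, so that is where I would expect to have to be careful: one should confirm it is available in the present generality (it holds for closed rational symplectic manifolds, using virtual perturbations as in Remark \ref{rmk: virtual_cycle} if needed, and here it is invoked only for monotone $M$). Everything else is a direct consequence of Propositions \ref{prop:cluster_struct_I} and \ref{prop:cluster_struct_II} together with the $\rho\cdot\Z$-periodicity of the essential spectrum.
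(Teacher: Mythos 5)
Your upper bound $l(H,\Gamma_H)\le\gamma(H)$ is argued exactly as in the paper (dispose of $\gamma(H)\ge\rho$ trivially, then combine Propositions \ref{prop:cluster_struct_I} and \ref{prop:cluster_struct_II}), and your overall strategy for the equality — exhibit a class whose spectral invariant sits at $c([M],\til\phi_H)-\gamma(H)$ inside the window — is also the paper's. The gap is in the one step you flagged: the identity $c([pt],\til\phi_H)=-c([M],\til\phi_{\ol H})$ is \emph{not} what Poincar\'e duality for spectral invariants gives in this generality. The duality of Entov--Polterovich, Ostrover and Usher states
\begin{equation*}
c([M],\til\phi_{\ol H})=-\inf\{\,c(b,\til\phi_H)\ :\ b\in QH(M)\setminus\{0\},\ \Pi([M],b)\neq 0\,\},
\end{equation*}
where $\Pi$ is the ($\bK$-valued, valuation-zero part of the) intersection pairing. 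Since $[pt]$ is merely one competitor in that infimum, this yields only $c([pt],\til\phi_H)\ge -c([M],\til\phi_{\ol H})$, i.e.\ a spectral point at some value $\ge c([M],\til\phi_H)-\gamma(H)$ — which is useless for bounding the diameter from below. Equality with the class $[pt]$ itself is automatic only when degree reasons force the degree-zero part of $QH(M;\Lambda_{\bK})$ to be $\bK\cdot[pt]$ (e.g.\ symplectically aspherical manifolds, $S^2$, $\C P^n$); for a general monotone $M$ (already $S^2\times S^2$) there are degree-zero classes $[pt]+\sum_{j\ge 1}x_jq^{-j}$, and cancellation can a priori make their spectral invariant strictly smaller than $c([pt],\til\phi_H)$, so your displayed chain of equalities is unjustified.

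The repair is small and is what the paper does: apply the duality in its infimum form with $a=[M]$, note that the infimum equals $c([M],\til\phi_H)-\gamma(H)>c([M],\til\phi_H)-\rho$, and that it is attained because (by equation \eqref{eq:cluster_struct_II} in the proof of Proposition \ref{prop:cluster_struct_II}) $\spec^{ess}(H)\cap\mathrm{I}$ is finite for $\mathrm{I}\in\Gamma_H$. This produces some $\beta\in QH(M)\setminus\{0\}$ with $c(\beta,\til\phi_H)=c([M],\til\phi_H)-\gamma(H)\in\mathrm{I}$, and the rest of your argument goes through verbatim with $\beta$ in place of $[pt]$.
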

\begin{proof}
Note that by definition the $\theta$-parsed spectral length of $H$ is bounded above by $\rho$ for any choice of $\theta$, in particular, $l(H,\Gamma_{H})\leq\rho$. Thus, we need only to consider the case where $\gamma(H)<\rho$. Equation (\ref{eq:cluster_struct_II}) in the proof of Proposition \ref{prop:cluster_struct_II} implies that $\#\{\spec^{ess}(H)\cap \text{I}\}<\infty$ for $\text{I}\in\Gamma_{H}$ and hence for $\text{I}\in\Gamma_{\theta}$ for any $\theta\in\rS^{1}_{\rho}$. Thus by Proposition \ref{prop:cluster_struct_II} the fundamental length of $H$ is given by  
\begin{equation*}
    l(H,\Gamma_{H})= c([M],H)-c(\al_{\min,H},H),
\end{equation*}
where, $\al_{\min,H}\in QH(M)$ has zero valuation. Consequently, Proposition \ref{prop:cluster_struct_I} implies that $l(H,\Gamma_{H})\leq\gamma(H)$. To prove equality, we observe that by the Poincar\'e duality property of spectral invariants (see \cite{OstroverAGT,EntovPolterovichCalabiQM}) and the fact that the set $\spec^{ess}(H)\cap \text{I}$ is finite, there exists $\beta\in QH(M)\setminus\{0\}$ such that $c(\beta,H)= -c([M],\ol{H})$. By adding $\gamma(H)$ to both sides of the equality we obtain $c(\beta,H)+\ga(H)=c([M],H)$, which implies 
\begin{equation*}
    c([M],H)-\rho < c(\beta,H) \leq c([M],H).
\end{equation*}
Therefore, $\ga(H)\leq l(H,\Gamma_{H})$, which gives us the claimed equality. 
\end{proof}

\begin{lma}\label{lma:spectral_length}
    Let $\phi$ be a Hamiltonian diffeomorphism. Then, $l(\phi)\leq\gamma(\phi)$.
\end{lma}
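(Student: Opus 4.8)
The plan is to chain the first inequality of Lemma~\ref{lma:fundamental_length} with the two infima appearing in the definitions of $l(\phi)$ and $\gamma(\phi)$; no genuinely new input is needed. Fix an arbitrary Hamiltonian $H \in \cl{H}$ generating $\phi$, and take $\theta = [c([M],H)] \in \rS^1_\rho$, so that $\Gamma_\theta = \Gamma_H$ in the notation of Definition~\ref{df:spectral_length}. Since $l(\phi)$ is, by definition, the infimum of $l(H,\Gamma_{\theta'})$ over all $\theta' \in \rS^1_\rho$, we have in particular
\[ l(\phi) \leq l(H,\Gamma_H). \]
By the unconditional part of Lemma~\ref{lma:fundamental_length} (the bound $l(H,\Gamma_H)\leq\gamma(H)$, which requires no hypothesis on the size of $\gamma(H)$), this gives $l(\phi) \leq \gamma(H)$.

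The crucial point is that the left-hand side $l(\phi)$ does not depend on the choice of generating Hamiltonian $H$: this is the independence statement recorded in Definition~\ref{df:spectral_length}, which follows from Proposition~\ref{prop: barcodes}~\ref{barcodes, change of H} since passing from $H$ to another generating Hamiltonian $H'$ shifts $\spec^{ess}(H)$ by a constant, and hence merely relabels the families $\Gamma_\theta$ without changing the infimum over $\theta$. Thus the inequality $l(\phi)\leq\gamma(H)$ holds with the \emph{same} number $l(\phi)$ for \emph{every} Hamiltonian $H$ with $\phi_H^1 = \phi$.

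It then suffices to take the infimum over all such $H$: by definition $\gamma(\phi) = \inf_{\phi_H^1 = \phi}\gamma(H)$, and therefore
\[ l(\phi) \leq \inf_{\phi_H^1 = \phi}\gamma(H) = \gamma(\phi), \]
which is the claimed inequality. I do not expect any obstacle in this argument; the only thing to be careful about is precisely the bookkeeping of which quantities depend on $H$ and which do not, which is handled by the independence remark above.
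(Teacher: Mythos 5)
Your proof is correct and takes essentially the same route as the paper's: the inequality $l(\phi)\leq l(H,\Gamma_H)\leq\gamma(H)$ via Lemma \ref{lma:fundamental_length}, the $H$-independence of $l(\phi)$, and then the infimum over generating Hamiltonians. If anything it is slightly more economical, since the paper first disposes of the case $\gamma(\phi)\geq\rho$ and then invokes the equality case $l(H,\Gamma_H)=\gamma(H)$ for $\gamma(H)<\rho$, whereas you correctly observe that the unconditional inequality of Lemma \ref{lma:fundamental_length} already suffices.
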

\begin{proof}
    Let $H$ be any Hamiltonian function that generates $\phi$. Observe that by definition $l(H,\Gamma_{\theta})<\rho$ for every $\theta\in\rS^{1}_{\rho}$, in particular, we have $l(\phi)<\rho$. Therefore, we may suppose $\gamma(\phi)<\rho$, in which case, we may take $H$ such that $\gamma(H)<\rho$. Consequently, Lemma \ref{lma:fundamental_length} implies that $l(H,\Gamma_{H})=\gamma(H)$, in particular we have that $l(\phi)\leq\gamma(H)$. If $H'$ is any other Hamiltonian function generating $\phi$, with $\gamma(H')\leq\gamma(H)$, the same argument implies $l(\phi)\leq\gamma(H')$. Thus, we conclude that $l(\phi)\leq\gamma(\phi)$.
\end{proof}

\begin{rmk}
Observe that Lemma \ref{lma:fundamental_length} immediately implies that if $\gamma(\phi) < \rho,$ then $l'(\phi) = \gamma(\phi).$ It is not clear that the same holds for $l(\phi),$ however we can prove that if $\gamma(\phi) < \frac{\rho}{2},$ then $l(\phi) = \gamma(\phi).$ Indeed, if $\gamma(H) < \gamma(\phi) + \eps <  \frac{\rho}{2},$ by Lemma \ref{lma:fundamental_length} we have $l(H,\Gamma_{H}) = \gamma(H) < \frac{\rho}{2}.$ However, this implies that for arbitrary $\theta \in \rS^1_{\rho},$ either $l(H,\Gamma_{\theta}) = l(H,\Gamma_{H}),$ if the partitions of $\Spec^{ess}(H)$ into clusters corresponding to $\Gamma_{H}$ and $\Gamma_{\theta}$ coincide, or $l(H,\Gamma_{\theta}) \geq \rho - l(H,\Gamma_{H}) > \frac{\rho}{2} > l(H,\Gamma_{H}),$ if they do not. Hence by taking the infima, $l(\phi) = \gamma(\phi).$ 
\end{rmk}

%
%

\begin{lma}\label{lma:newman_for_PR}
Let $\phi$ be a generalized Hamiltonian $\bK$ pseudo-rotation with sequence $k_j=j$ and take a Hamiltonian $H$ generating $\phi$. Suppose that all the distances between pairs of points in $\spec^{ess}(H)$ are rational multiples of $\rho.$ Then, there exists a positive integer $m$ such that $\gamma(\phi^m)\geq\rho.$ 
\end{lma}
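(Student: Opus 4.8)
The plan is to exploit the combinatorial structure of $\Spec^{ess}(H)$ for a pseudo-rotation together with the iteration formula $\Spec^{ess}(H^{(k)};\Q) = k\cdot \Spec^{ess}(H;\Q) + \rho\cdot\Z$ (as in part (ii)/(iii) of Theorem \ref{thm: torsion is PR}, or its analogue for generalized pseudo-rotations with sequence $k_j=j$). Since $\phi$ is a generalized $\bK$ pseudo-rotation, $N(\phi^{k},\bK) = \dim_\bK H_*(M;\bK)$ for all $k$, so the barcode $\cl{B}(H^{(k)})$ has no finite bars (Proposition \ref{prop: barcodes} (iv)-(v) with $K(\phi^k,\bK)=0$), hence $\Spec^{ess}(H^{(k)};\bK) = \Spec^{vis}(H^{(k)};\bK) = \Spec(H^{(k)})$, a set with exactly $B = \dim_\bK H_*(M;\bK)$ points in each fundamental domain of the $\rho\cdot\Z$-action. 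So for each $k$ the set $\Spec^{ess}(H^{(k)})$, reduced mod $\rho$, is a finite subset of $\rS^1_\rho$ of cardinality at most $B$.

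First I would normalize: write the $B$ essential spectral values of $H$ in a fundamental domain as $c_1,\dots,c_B$, and by hypothesis each difference $c_i - c_j$ lies in $\Q\cdot\rho$. Choose a common denominator $D$ so that every $c_i - c_j \in \frac{\rho}{D}\Z$; then $\Spec^{ess}(H)$ mod $\rho$ is contained in a single coset of the cyclic subgroup $\frac{\rho}{D}\Z/\rho\Z \cong \Z/D\Z$ of $\rS^1_\rho$. The spectral length $l(\phi)$ and more relevantly $l(\phi^m)$ are then governed by how the dilated set $m\cdot\Spec^{ess}(H) \pmod\rho$ sits inside $\rS^1_\rho$. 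The key point is that multiplication by $m$ on $\frac1D\Z/\Z$ collapses the set when $\gcd(m,D)$ is large: if I pick $m$ to be a suitable multiple of $D$ (or more precisely, choose $m$ so that $m\cdot(c_i-c_j)\in\rho\Z$ for all $i,j$, e.g. $m = D$), then all the points $m c_i$ become congruent mod $\rho$, so $\Spec^{ess}(H^{(m)}) \subset \rho\Z + (\text{single point})$. Combining this with $\Spec^{ess}(H^{(m)}) = \Spec(H^{(m)})$ and the fact that $\Spec(H^{(m)})$ must be non-empty and the barcode of $\phi^m$ consists only of infinite bars, one deduces that $\Spec^{ess}(H^{(m)})$ mod $\rho$ is a single point — in other words the fundamental length (indeed every $\theta$-parsed length) of $H^{(m)}$ in the "aligned" chart is $0$.

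Now I invoke the converse direction of Lemma \ref{lma:fundamental_length}: if $\gamma(H^{(m)}) < \rho$ then $l(H^{(m)},\Gamma_{H^{(m)}}) = \gamma(H^{(m)})$. The contrapositive gives the lemma: if $\gamma(\phi^m) < \rho$ for our chosen $m$, then there is a Hamiltonian $G$ generating $\phi^m$ with $\gamma(G) < \rho$, hence $l(G,\Gamma_G) = \gamma(G)$; but $\Spec^{ess}(G)$ differs from $\Spec^{ess}(H^{(m)})$ only by an overall additive constant (Proposition \ref{prop: barcodes} \ref{barcodes, change of H}), so $l(G,\Gamma_G) = l(H^{(m)},\Gamma_{H^{(m)}}) = 0$, forcing $\gamma(\phi^m) = 0$, contradicting non-degeneracy of $\gamma$ (we may assume $\phi^m \neq \id$; if $\phi^m = \id$ for the chosen $m$ one simply shifts to a different multiple of $D$ coprime to the order, or notes the statement is about existence of \emph{some} $m$, and a pseudo-rotation with $k_j=j$ having $\phi^m=\mathrm{id}$ would have $\phi$ of finite order, a case one handles separately via Theorem \ref{thm: lower bound} which already gives $\gamma(\phi)\geq\rho/d$ and iterating — or more cleanly, among the infinitely many multiples of $D$ at least one gives $\phi^m\neq\mathrm{id}$ unless $\phi$ itself has finite order).

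The main obstacle I anticipate is \emph{bookkeeping the reduction to a single fundamental domain and the dilation arithmetic carefully}: one must make sure that "all pairwise distances are rational multiples of $\rho$" genuinely lets one place all of $\Spec^{ess}(H)/\rho\Z$ inside one cyclic subgroup $\frac1D\Z/\Z$ (this is automatic once a basepoint is fixed), and that the iteration formula $\Spec^{ess}(H^{(m)};\Q) = m\cdot\Spec^{ess}(H;\Q) + \rho\Z$ is available in the generalized-pseudo-rotation-with-sequence-$j$ setting — which should follow by the same barcode/Smith argument underlying Theorem \ref{thm: torsion is PR} (iii), since the vanishing of finite bars for all iterates is exactly the hypothesis built into "generalized $\bK$ pseudo-rotation with sequence $k_j=j$". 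A secondary subtlety is ensuring $\gamma(H^{(m)}) < \rho$ can indeed be assumed when applying the equality case of Lemma \ref{lma:fundamental_length}: this is handled by the contrapositive packaging above, so it is not a real gap, merely something to phrase correctly.
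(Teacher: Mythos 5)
Your argument is correct and follows essentially the same route as the paper's: clear the denominators of the pairwise differences to pick $m$, use the iteration identity $\spec^{ess}(H^{(m)}) = m\cdot\spec^{ess}(H)+\rho\cdot\Z$ (available because the pseudo-rotation hypothesis with sequence $k_j=j$ gives $\fix(\phi^m)=\fix(\phi)$, non-vanishing local Floer homology, and no finite bars), conclude that every $\theta$-parsed spectral length of $\phi^m$ vanishes, and then contradict Lemma \ref{lma:fundamental_length} together with the non-degeneracy of $\gamma$. The only cosmetic difference is your digression about the case $\phi^m=\id$, which in fact cannot occur for a non-trivial generalized pseudo-rotation with sequence $k_j=j$, since then $\fix(\phi^m)=\fix(\phi)\neq M$.
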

\begin{proof} Fix the base coefficient field $\bK$ for all homological notions in the proof. We can suppose $\gamma(\phi)<\rho$, otherwise the implication of the theorem would be true for $m=1$. Furthermore, we note that the hypothesis of the theorem is independent of the choice of Hamiltonian function, thus, we may suppose that $\gamma(H)<\rho$, which by Lemma \ref{lma:fundamental_length}, implies $l(H,\Gamma_{H})=\gamma(H)$. Hence, we have a cluster structure determined by finitely many values of the essential spectrum of $H$ belonging to the interval
\begin{equation*}
    \text{I}_{H}=(c([M],H)-\rho\,,\,c([M],H)\,].
\end{equation*}
Thus, setting 
\begin{equation*}
    \spec^{ess}(H)\cap\text{I}_{H} = \{c_1,\dots,c_B\},
\end{equation*}
by the hypothesis of the proposition, we have
\begin{equation*}
    c_i-c_j= \frac{a_{ij}}{b_{ij}} \rho \in \rho\cdot \Q\cap[0,\rho)
\end{equation*}

for all $i\neq j$. Note that any pair of points $\al,\beta \in \Spec^{ess}(H)$ are of the form $\al=c_i+k\rho$ and $\beta=c_j+l\rho$ for integers $l$ and $k$. Thus their difference is of the form
\begin{equation}\label{eq:newman_for_PR_I}
\al-\beta=(a_{ij}/b_{ij}+(k-l))\rho.     
\end{equation}

Now, let $m$ be the integer given by $\prod_{i<j}b_{ij}$. The facts that $\fix(\phi^m)=\fix(\phi)$, $\spec^{ess}(H)=\spec^{vis}(H)$, and $HF^{\loc}(\phi^{k_j},\cl{F}^{(k_j)}) \neq 0$ for all generalized fixed points $\cl{F}$ of $\phi$ imply \begin{equation}\label{eq:newman_for_PR_II}
    \spec^{ess}(H^{(m)})=m\cdot\spec^{ess}(H) + \rho\cdot\Z.
\end{equation}

As a consequence of equations (\ref{eq:newman_for_PR_I}) and (\ref{eq:newman_for_PR_II}), and the definition of $m$ we have that $\spec^{ess}(H^{(m)})=\rho\cdot\Z + c,$ for a suitable constant $c \in \R.$ Hence, $l(F,\Gamma_{\theta})=0$ for any Hamiltonian $F$ generating $\phi^m$ and $\theta\in\rS^{1}_{\rho}$. If $\gamma(F)<\rho$, then by Lemma \ref{lma:fundamental_length} $\gamma(F)=l(F,\Gamma_{F})=0$, which is absurd since this would imply $\phi^m=\id$. Hence $\gamma(\phi^m)\geq\rho$. 
\end{proof}

\section{Hamiltonian torsion}\label{subsec: torsion intro}

\subsection{Proof of Theorem \ref{thm: torsion is perfect}}
Let $(M,\om)$ be a closed symplectic manifold and consider a non-trivial $\phi\in\Ham(M,\om)$ such that $\phi^p=\id$, where $p$ is prime. We can construct a Riemannian metric $\brat{\cdot,\cdot}$ invariant under the action of \[G=\{\id,\phi,\dots,\phi^{p-1}\},\] a fact that is true for any compact Lie group $G$. In other words, $\phi$ is an isometry with respect to this metric. We first show that $\text{Fix}(\phi)$ is composed of finitely many isolated path-connected components. 

Let $x\in\cF\subset\text{Fix}(\phi)$, where $\cF$ is the path-connected component of $x$. We claim that there exists a neighbourhood of $x$ which does not intersect any other connected component of $\text{Fix}(\phi)$. Suppose the contrary: then $x$ would be a limit point of $\text{Fix}(\phi)\setminus\cF$. In particular if $B_{\eps}(x)$ is a normal ball of radius $\eps$ around $x$, then there exists a point $y\in B_{\eps}(x)\bigcap(\text{Fix}(\phi)\setminus\cF)$ and we can consider the unique minimizing geodesic $\ga$, given by the exponential map, satisfying $\ga(0)=x$ and $\ga(1)=y$. However, $\phi$ is an isometry so we have that $\til\ga=\phi\circ\ga$ is also a minimizing geodesic satisfying $\til\ga(0)=x$ and $\til\ga(1)=y$, hence by uniqueness we must have $\text{Image}(\ga)\subset\text{Fix}(\cF)$ contradicting the fact that $y$ was in a distinct path-connected component. Since $\cF$ is compact we can choose the radius $\eps$ of the normal ball uniformly so that $\cF$ is in fact isolated, which by the compactness of $M$ implies that there are only finitely many path-connected components. 


Furthermore, if $k$ is not divisible by $p$ then we have $\text{Fix}(\phi^{k}) = \text{Fix}(\phi).$ In fact, since $p$ and $k$ are relatively prime there exist integers $a_k, b_k$ such that $a_k k + b_kp=1$. Thus,
\begin{equation*}
\phi = \phi^{a_k k + b_k p} = \phi^{a_k k}\phi^{b_k p}=\phi^{a_k k}.
\end{equation*}
So if $x$ is a fixed point of $\phi^k$ then the above equality shows that $x$ is also a fixed point of $\phi.$ Conversely, if $x$ is a fixed point of $\phi$ it is clearly a fixed point for any of its iterations. Finally, the same argument shows that if $x$ is contractible as a fixed point of $\phi^k$ it is also contractible as a fixed point of $\phi,$ and vice versa. Therefore $\fix(\phi^k) = \fix(\phi).$

To show that $\phi$ is weakly non-degenerate we utilize the fact if $M$ is connected and $f\in\text{Iso}(M,\brat{\cdot,\cdot})$ is such that $f(p)=p$ and $D(f)_p=\id_{T_pM}$ for a point $p\in M,$ then $f=\id_M$. This can be proven by considering the non-empty closed set
\begin{equation*}
S=\{q\in M \,|\, f(q)=q,\, D(f)_q=\id_{T_qM}\},
\end{equation*}
and noting that the existence of normal balls implies that $S$ is also open. Applied to our context, we must then show that for every $x\in\text{Fix}(\phi)$, $D(\phi)_x$ must have at least one eigenvalue different from 1, otherwise $\phi$ would have to be trivial. One way to see this is by noting that as $D(\phi)_x\in Sp_{2n}(T_xM)$ is an element of finite order, its Jordan form is diagonal, hence it is trivial if and only if all its eigenvalues are equal to 1. 

A slight modification of the above arguments, which amounts to the Slice theorem (see \cite[Theorem I.2.1]{Audin-TorusBook}) shows first that each path-connected component $\cl{F}$ of the fixed point set of $\phi$ is a closed connected submanifold of $M.$ Moreover, for each $\cl{F}$ and $x \in \cl{F},$ $\ker(D(\phi)_x - \id_{T_xM}) = T_x \cl{F},$ which is to say that the graph of $\phi$ intersects the diagonal $\Delta \subset M \times M^{-}$ {\em cleanly}. In other words, $\phi$ is a Floer-Morse-Bott Hamiltonian diffeomorphism. 

Finally, to prove that for a generalized fixed point $\cl{F}$ of $\phi,$ and capping $\ol{\mf{F}}$ of its corresponding generalized periodic orbit $\mf{F},$ the mean index $\Delta(H,\ol{x})$ is constant as a function of $x \in \cl{F},$ we argue as follows. We shall prove that for a fixed $x_0 \in \cl{F},$ the function $f:\cl{F} \to \R,$ given by $f(x) = \Delta(H,\ol{x}) - \Delta(H,\ol{x}_0)$ has integer values. By continuity of the mean-index this implies that $f$ is identically constant, and as $f(x_0) = 0,$ it is identically zero. This shows the required statement.

First we prove that $f$ has integer values. Similarly to the case of a Riemannian metric, by \cite[Proposition 2.5.6]{McDuffSalamonIntro3} we can find an $\om$-compatible almost complex structure $J$ on $M$ that is preserved by $\phi.$ This allows us to consider $D(\phi)_x \in Sp_{2n}(T_xM)$ for all $x \in \cl{F}$ a unitary matrix, which has diagonal Jordan form, and is determined up to conjugation by its spectrum with geometric multiplicities. Furthermore its spectrum lies in the finite set $\mu_p \subset \C$ of $p$-th roots of unity. Therefore by continuity of the spectrum in the operator norm, which holds for normal and hence for unitary matrices in particular, this implies that in fact the spectrum of $D(\phi)_x$ does not depend on $x \in \cl{F},$ and that all $D (\phi)_x$ for $x \in \cl{F}$ are conjugate by appropriate unitary isomorphisms. Therefore $D(\phi)_x$ and $D(\phi)_{x_0}$ can be connected to the identity by conjugate paths, which therefore have equal mean-indices. Now, as the paths obtained from $D(\phi^t_H)_x,$ $D(\phi^t_H)_{x_0}$ by means of the cappings differ from these conjugate paths by suitable loops $\Phi,$ $\Phi_0$ in the symplectic group, we obtain that $f(x) = \Delta(H,\ol{x}) - \Delta(H,\ol{x}_0) = \Delta(\Phi) - \Delta(\Phi_0) \in \Z.$ 

Finally, observe that $D(\phi)_x$ being $(\om_x, J_x)$-unitary, $T_x \cl{F}$ is $J_x$-invariant, and the tangent space $T_x M$ splits as a symplectic direct sum $T_x \cl{F} \oplus \mathcal{N}_x,$ where $\mathcal{N}_x$ is the normal bundle to $\cl{F}$ at $x$ (in fact this splitting can be obtained by taking $\cl{N}_x$ to be the Hermitian orthogonal complement of $T_x \cl{F}$). In particular, $\cl{F}$ is a symplectic submanifold of $(M,\om).$

Hence, the above discussion shows that $\phi$ is generalized perfect with sequence $k_j$ being the monotone-increasing ordering of any infinite subset of $\{ k \neq 0\; (\mrm{mod} \;p)\}.$

\qed

\begin{rmk}
	We have just seen that a $p$-torsion Hamiltonian diffeomorphism $\phi$ is weakly non-degenerate and generalized perfect. In our setting it is enough to consider the case $\phi$ has prime order. In fact, if $\phi$ has order $d\geq 2$ and $p$ is a prime that divides $d$, i.e. there is an integer $m$ such that $d=pm$, we consider the Hamiltonian diffeomorphism $\psi=\phi^m$ which, in turn, has prime order. Equivalently, by Cauchy's theorem, if $G$ is a finite group then for every prime $p$ dividing its order there exists an element of order $p$.
\end{rmk}

\subsection{Proof of Proposition \ref{prop: Pozniak}}

We first observe that by the universal coefficients formula, it is sufficient to prove the statement for $R = \Z.$

Now from \cite[Chapter 9, and Proof of Theorem 2.3.2]{Schmaschke} and the translation \cite[Theorem 5.2.2]{BPS-propagation}, by means of the graph construction, of \cite[Theorem 3.4.11]{Pozniak} from the setting of Lagrangian clean intersections to the Floer-Morse-Bott Hamiltonian setting, it is direct to see that there is an isomorphism \[ HF^{\loc}(\phi,\cl{F}) \cong H(\cl{F};\cl{O} \times_{\pm 1} \Z)\] of the local homology and the homology of $\cl{F}$ with coefficients in a $\Z$-local system $\cl{O} \times_{\pm 1} \Z,$ with structure group $\{\pm 1\} \cong\Z/2\Z,$ associated to a double cover $\cl{O}$ of $\cl{F}$ that we describe below. It is the goal of the proof to show that {\em in our case} this local system is trivial.

The local system $\cl{O}$ is defined as follows. For $x,y \in \cl{F},$ consider the space $\sc{P}_{x,y}(\cl{F})$ of smooth maps $\gamma:\R \to \cl{F},$ $\displaystyle\lim_{s \to -\infty}\gamma(s) = x,$ $\displaystyle\lim_{s \to \infty}\gamma(s) = y$ for which the convergence is exponential with derivatives. Let $u_{\gamma}: \R \times \rS^1 \to M$ denote the cylinder $u_{\gamma}(s,t) = \phi^t_H(\gamma(s)).$ Look at the bundle $E_{\gamma} \to \R \times \rS^1$ given by $E_{\gamma} = (u_{\gamma})^* TM.$ Now for a sufficiently small positive number $\eps > 0$ depending only on $H$ and $\cl{F}$ consider real Cauchy-Riemann differential operators \[D_{\gamma}: W^{1,p,\eps}(E_{\gamma}) \to L^{p,\eps}(E_{\gamma})\] between Sobolev spaces of sections of $E_{\gamma}$ with $\epsilon$-exponential decay as $|s| \to \infty,$ that over $(-\infty,-C)$ and $(C,\infty),$ for a large $C >0,$ coincide with {real Cauchy-Riemann operators} determined by a choice of an $\om$-compatible almost complex structure $\{J_t\} \in \cl{J}_M$ and connections whose parallel transport over the curve $\{(s,t)\}_{t \in [0,1]}$ (with $s$ fixed) is determined by the linearization of $\phi^t_H$ at $\gamma(s).$ For $\eps > 0$ sufficiently small, all these operators are Fredholm. Moreover, the auxiliary data of connections and complex structures forming a contractible space, all these operators are furthermore homotopic to each other in the space of Fredholm operators. It is shown in \cite{Schmaschke} and \cite[Chapter 8]{FO3:book-vol12} that for $\gamma,\gamma' \in \sc{P}_{x,y}(\cl{F})$ the orientation torsors $|D_{\gamma}|,$ $|D_{\gamma'}|$ of the determinant spaces $\det(D_{\gamma}),$ $\det(D_{\gamma'})$ are canonically isomorphic\footnote{Recall that the determinant line of a Fredholm operator $D$ is the real vector space of dimension $1$ defined as $\det(D) = \det(\mrm{coker}(D))^{\vee} \otimes \det(\ker(D)),$ where for a real finite-dimensional vector space $V$ of dimension $d,$ $\det(V) = \Lambda^d(V),$ and for a real vector space $l$ of dimension $1$ its orientation torsor over the group ${\pm 1}$ is $|l| = (l\setminus \{0\})/(\R_{>0}).$}. We can therefore fix $x \in \cl{F},$ and set our local system $\cl{O} \to \cl{F}$ to be induced from the sets $|D_{\gamma}|$ for $\gamma \in \sc{P}_{x,y}(\cl{F})$ for $y \in \cl{F}$ with the natural identifications provided by this isomorphism.  

Now we prove that $\cl{O}$ is trivial in our case. Suppose $\gamma \in \sc{P}_{x,y}(\cl{F}).$ It is sufficient to show that $\det(D_{\gamma})$ is canonically oriented. Now, as in the proof of Theorem \ref{thm: torsion is perfect}, in our case there exists an $\om$-compatible almost complex structure $J$ on $M$ invariant under $\phi.$ In particular $D(\phi)_x: T_x M \to T_x M$ is $(J_x,\om_x)$-unitary for all $x \in \cl{F}.$ This, together with the fact that the universal cover $\til{Sp}(2n,\R)$ deformation-retracts to the universal cover $\til{U}(n)$ of its unitary subgroup, implies that $D_{\gamma}$ is homotopic in the space of Fredholm operators, canonically up to a contractible choice of auxiliary data, to a real Cauchy-Riemann operator $D: W^{1,p,\eps}(E_{\gamma}) \to L^{p,\eps}(E_{\gamma})$ corresponding to a $J$-unitary connection. (In fact we apply a homotopy depending smoothly on $x_0 \in \cl{F},$ and from the symplectic connections on $x^*(TM) \to \rS^1$ for $x(t) = \phi^t_H(x_0),$ given by the linearlized flow of $\phi^t_H$ to unitary connections, while at all times preserving their monodromies $D(\phi^1_H)_{x_0}$ over $\rS^1$ for all $x_0 \in \cl{F}.$) But such operators are in fact complex Cauchy-Riemann operators, their kernels and cokernels are complex vector spaces, and hence their determinants are canonically oriented. Hence $|D|$ and $|D_{\gamma}|$ admit canonical elements $o,o_{\gamma}$. By a similar argument, following the definition of the isomorphisms $\psi_{\gamma,\gamma'}: |D_{\gamma}| \xrightarrow{\sim} |D_{\gamma'}|$ from \cite{Schmaschke} and \cite[Chapter 8]{FO3:book-vol12}, the key point being that orientation gluing is natural with respect to homotopies \cite[Lemma 9.4.1]{Schmaschke}, we see that $\psi_{\gamma,\gamma'}(o_{\gamma}) = o_{\gamma'}.$ Therefore $\cl{O}$ admits a continuous section, and is hence trivial. This finishes the proof. \qed

\subsection{Proof of Theorem \ref{thm: lower bound}}
Consider $\phi\in\Ham(M,\om) \setminus \{\id \}$ such that $\phi^d=\id$ and let $H$ be a Hamiltonian function generating $\phi$. Then $\gamma(H)>0$ by the non-degeneracy of the spectral norm. Since $\phi$ has finite order $d$ we have that $\{\phi_{H^{(d)}}^t\}_{t \in [0,1]}$ is a Hamiltonian loop, which, in addition to the fact that $M$ has rationality constant $\rho>0$, implies that $\Spec(H^{(d)}) = a +\rho\cdot\Z $ for a real constant $a$. One can show by a quick calculation that $\Spec(\ol{H}^{(d)}) = -a +\rho\cdot\Z.$

Furthermore, observe that $\Spec(H)\subset\Spec(H^{(d)})/d$. In fact, if $c\in\Spec(H)$ then there exists a 1-periodic capped orbit $\ol{x}\in\til\cO(H)$ such that $\cA_H(\ol{x})=c$. Consequently, $\cA_{H^{(d)}}(\ol{x}^{(d)}) = d\cdot\cA_H(\ol{x}) = d\cdot c$, which implies the claim when added to the fact that $\ol{x}^d$ is a critical point of $\cA_{H^{(d)}}$.

Finally, the above observations imply that $\gamma(H)\in(\rho/d)\cdot\Z$. In particular, the fact that it is positive implies that $\gamma(H)\geq\rho/d$. Since $H$ was an arbitrary Hamiltonian generating $\phi$ it is clear that $\gamma(\phi)\geq\rho/d$. 
\qed

\subsection{Proof of Theorem \ref{thm: Polterovich}}\label{subsec: theorem of Polterovich}
Similarly to the case of Theorem \ref{thm: lower bound}, $\phi^d = \id$ implies in the symplectically aspherical setting that for a Hamiltonian $H$ generating $\phi,$ we have $\spec(H^{(d)}) = \{a\},$ and $\spec(\ol{H}^{(d)}) = \{-a\},$ for a constant $a \in \R,$ and hence $\spec(H) \subset \spec(H^{(d)})/d = \{a/d\}$ consists of at most one point. Since $\spec(H)$ contains $c([M],H),$ we obtain that $c([M],H) = a/d.$ Similarly, $c([M],\ol{H}) = -a/d.$ This means that $\gamma(H) = 0$ and hence $\gamma(\phi) = 0,$ which implies by non-degeneracy of $\gamma$ that $\phi = \id.$ This finishes the proof. \qed

\subsection{Proof of Theorem \ref{thm: spectral Newman}}
Consider $\phi \in\Ham(M,\om) \setminus \{\id\}$ such that $\phi^p=\id$ for a prime number $p$. Fix a coefficient field $\bK.$ We show that there exists a positive integer $m$, such that
\begin{equation}\label{eq:newman}
    \gamma(\phi^m)\geq \frac{\lfloor\frac{p}{2}\rfloor}{p}\cdot\rho,
\end{equation}
where $\lfloor\frac{p}{2}\rfloor$ denotes the floor of $p/2$. We may suppose $p\geq3$, since the case $p=2$ is settled by Theorem \ref{thm: lower bound} for $m=1$. In this case, note that $\lfloor\frac{p}{2}\rfloor = \frac{p-1}{2}$. Supposing that $\gamma(\phi)< \rho(p-1)/2p$, we can find a Hamiltonian $H$ generating $\phi$ satisfying $\gamma(H)< \rho(p-1)/2p$. In the proof of Theorem \ref{thm: lower bound} we saw that $\gamma(H)$ must be a positive integer multiple of $\rho/p$. Therefore, by Lemma \ref{lma:fundamental_length} we can find a positive integer $r\leq (p-3)/2$, such that 
\begin{equation}\label{eq:newman_I}
l(H,\Gamma_{H})=\gamma(H)=\frac{r\rho}{p}.     
\end{equation}
In particular, we have that $2r<p$, which in addition to the fact that $p>2$, implies that there exist integers $a,b$ such that $a(2r)+bp=1$. Observe that $b$ must be an odd integer since $a(2r)$ is even while $p$ and $1$ are odd. Let $k$ be the integer such that $-b=2k+1$. Furthermore, note $a\neq0$, and set $m=|a|$. There are two cases to be considered depending on the sign of the integer $a$. 

If $a>0$ we have that $m(2r)-(2k+1)p=1$, which implies
\begin{equation}\label{eq:newman_II}
    \frac{mr}{p}-\frac{p+1}{2p}=k,
\end{equation}
where $(p+1)/2=\lceil p/2\rceil$. Furthermore, since $m$ and $p$ are coprime, Theorem \ref{thm: torsion is PR} implies that 
\begin{equation}\label{eq:newman_III}
    \spec^{ess}(H^{(m)}) = m\cdot\spec^{ess}(H) + \rho\Z.
\end{equation}
Combining (\ref{eq:newman_I}), (\ref{eq:newman_II}) and (\ref{eq:newman_III}) we obtain that there exist $c_0,c_1\in\spec^{ess}(H)$ such that 
\begin{equation*}
    mc_1-mc_0=\frac{mr\rho}{p}=\frac{(p+1)\rho}{2p} + k\rho.
\end{equation*}
In addition, $mc_1+j\rho$ and $mc_0+j\rho$ belong to the essential spectrum of $H^{(m)}$ for every integer $j$. We conclude that for each $\theta\in\rS^{1}_{\rho}$ and $\text{I}\in\Gamma_{\theta}$, there exists an integer $l$ such that either
\begin{equation*}
    mc_1+l\rho,mc_0+(k+l)\rho\in\text{I},
\end{equation*}
or
\begin{equation*}
    mc_1+l\rho,mc_0+(k+(l+1))\rho\in\text{I}.
\end{equation*}
Consequently,
\begin{align*}
    l(H^{(m)},\Gamma_{\theta})&\geq\min\{mc_1-mc_0-k\rho,\,mc_0- mc_1+(k+1)\rho\}\\
        &=\min\bigg\{\frac{(p+1)\rho}{2p},\, \frac{(p-1)\rho}{2p}\bigg\}=\frac{(p-1)\rho}{2p}.
\end{align*}
Since, $\theta$ was arbitrary we conclude 
\begin{equation}\label{eq:newman_IV}
    l(\phi^m)\geq \frac{\lfloor\frac{p}{2}\rfloor}{p}\cdot\rho.
\end{equation}
When $a<0$, an analogous argument can be made to show that once again (\ref{eq:newman_IV}) is obtained. Hence, by Lemma \ref{lma:spectral_length} we obtain inequality (\ref{eq:newman}). \qed

\subsection{Proof of Theorem \ref{thm: PR}}
Consider a generalized pseudo-rotation $\phi$ as in Lemma \ref{lma:newman_for_PR}. As consequence of this lemma, we may suppose that there exist $c_1,c_2\in\spec^{ess}(H)$ such that $c_1-c_2\in\rho \cdot (\R\setminus\Q)$, otherwise $\gamma(\phi^{m})\geq\rho$ for some positive integer $m$. Since the orbit of any irrational rotation in $\rS^1$ is dense, for every $\eps>0$ there exists an integer $m_{\eps}$ such that
\begin{equation*}
    \frac{\rho}{2}-\eps<d_{S^1_{\rho}}([c_1],[m_{\eps}\cdot c_2]) \leq\frac{\rho}{2},
\end{equation*}
where for $x \in \R,$ we denote by $[x] \in S^1_{\rho} = \R/\rho\Z$ its equivalence class, and $d_{S^1_{\rho}}$ is the distance function on $S^1_{\rho}$ coming from the standard flat metric on $\R.$ Therefore, arguing as in the proof of Theorem \ref{thm: spectral Newman} we conclude
\begin{equation*}
    \sup_{k\in\Z_{>0}}\gamma(\phi^{k})\geq\frac{\rho}{2}.
\end{equation*}
\qed

\section{Proofs of no-torsion theorems}



The proofs of Theorems \ref{thm: generalized perfect CY negmon} and \ref{thm: Morse-Bott PR uniruled} rely on the following observations regarding the mean-index. First, let $\til{\phi}$ be a lift of $\phi$ to the universal cover $\til{\Ham}(M,\om)$ of $\Ham(M,\om).$ As our path-connected isolated fixed point sets are weakly non-degenerate, if the capping $\ol{\mf F}$ of the generalized $1$-periodic orbit $\mf F$ corresponding to an isolated fixed point set $\cl F \subset \fix(\phi)$ carries a cohomology class $\mu$ of Conley-Zehnder index $n$ in $HF^n(\til{\phi}) \cong QH^{2n}(M,\Lambda_{\bK}),$ for a coefficient field $\bK,$ then its mean-index $\Delta = \Delta(\til{\phi},\ol{\mf F})$ satisfies $\Delta - n <n< \Delta + n.$ Hence \begin{equation}\label{eq: Delta pos} \Delta(\til{\phi},\ol{\mf F}) \in (0,2n).\end{equation} 

Similarly, if $\ol{\mf F}$ carries a homology class $u \in HF_n(\til{\phi}) \cong QH_{2n}(M,\Lambda_{\bK}),$ then \eqref{eq: Delta pos} holds. Both of these implications follow from Lemma \ref{lma:carrier_spectral_invariant_genralized-QH}, Equation \eqref{eq:supp_local_FH_generalized}, and Section \ref{subsec: Floer coh}. We will specifically use the case $u = [M],$ which follows from Lemma \ref{lma:carrier_spectral_invariant_genralized}.

\subsection{Proof of Theorem \ref{thm: generalized perfect CY negmon}}
We first treat the negative monotone case. Choose $H \in \cl H,$ such that the path $\{\phi^t_H \}_{t \in [0,1]}$ represents the class $\til{\phi}$ lifting $\phi.$ Let $k_i$ be the sequence associated to $\phi$ as a generalized perfect diffeomorphism. By the pigeonhole principle, there exists an isolated fixed point set $\cl F \subset \fix(\phi),$ and an increasing subsequence of $k_i$ that we renumber and denote by $r_i,$ such that $c([M], H^{(r_i)})$ is carried by a capping $\ol{\mf{G}}_i$ of the isolated set of $1$-periodic orbits of the $r_i$-iterated Hamiltonian $H^{(r_i)}$ corresponding to ${\cl{F}}^{(r_i)}.$ Set $\ol{\mf{G}}= \ol{\mf{G}}_1.$ If $r_1$ divides all $r_i,$ by taking a power of $\phi,$ we can assume that $r_1 = 1.$ We proceed with the proof in this case, and then explain how to modify it in the general case.

Write $\ol{\mf{G}}_i$ as a recapped iteration of $\ol{\mf{G}},$ that is \begin{equation}\label{eq: recap 0} \ol{\mf{G}}_i = \ol{\mf{G}}^{(r_i)} \# A_i.\end{equation} We claim that for $r_i$ large, $\om(A_i) \geq 0,$ and $c_1(A_i) > 0$ contradicting negative monotonicity. Indeed, write $\cA_i$ for the action functional of $H^{(r_i)},$ and $\cA:=\cA_1.$ Then by \eqref{eq: recap 0} and the triangle inequality for spectral invariants, \begin{equation}\label{eq: large omega} r_i \cA(\ol{\mf{G}}) - \om(A_i) = \cA_i(\ol{\mf{G}}_i) = c([M], H^{(r_i)}) \leq r_i c([M], H) = r_i \cA(\ol{\mf{G}}).\end{equation} Hence \[\om(A_i) \geq 0.\] However, as $\ol{\mf{G}}_i$ carries $c([M], H^{(r_i)}),$ by \eqref{eq: Delta pos} we have $\Delta(H^{(r_i)},\ol{\mf{G}}_i) \in (0,2n)$ and also $\Delta(H,\ol{\mf{G}}) \in (0,2n).$ Hence $r_i \Delta(H, \ol{\mf{G}}) > 2n$ for $r_i$ large enough, and \begin{equation}\label{eq: large mean-index} 2n > \Delta(H^{(r_i)}, \ol{\mf{G}}_i) = r_i \Delta(H, \ol{\mf{G}}) - 2c_1(A_i).\end{equation} Therefore \[c_1(A_i) > 0,\] which finishes the proof.

If $r_1$ does not divide all $r_i,$ let $d = \gcd(\{r_i\}_{i\geq 1}).$ Then $\cl F \subset \fix(\phi^d).$ Passing to $\phi^d$ instead of $\phi,$ and to $r_i/d$ instead of $r_i,$ we can and will assume that $d = 1.$ Now take $\ol{\mf{H}}$ to be a capping of $\mf{F}$ for $H,$ and write $\ol{\mf{G}}_i = \ol{\mf{H}}^{(r_i)} \# B_i.$ Now extend $\om$ and $c_1$ to negatively proportional morphisms $\Gamma^{\Q} = \Gamma \otimes_{\Z} \Q \to \R.$ For each $i \geq 1,$ set $A_i = B_i - \frac{r_i}{r_1} B_1 \in \Gamma^{\Q}.$ Now setting $\ol{\mf{G}} = \ol{\mf{G}}_{1},$ the estimates \eqref{eq: large omega}, \eqref{eq: large mean-index} hold still, which yields $\om(A_i) \geq 0,$ $c_1(A_i) > 0,$ contradicting negative monotonicity.


We now prove the symplectic Calabi-Yau case of the theorem. In this case, the mean-index of each capped fixed point set $\ol{\mf F}$ does not depend on the capping. Hence we write $\Delta(H,\cl F)$ for each generalized fixed point $\cl F$ for this mean index. Then for each positive sequence $k_i \to \infty$ of iterations with $\phi^{k_i}$ having a fixed finite number of weakly non-degenerate generalized fixed points, we argue as follows. For each $\cl F \in \pi_0(\fix(\phi)),$ \[\Delta(H^{(k_i)}, \cl F^{(k_i)}) = (k_i/k_1) \Delta(H^{(k_1)}, \cl F^{(k_1)}).\] Hence, if $\Delta(H^{(k_1)}, \cl F^{(k_1)}) > 0$ then $\Delta(H^{(k_i)}, \cl F^{(k_i)}) > 2n$ for all $k_i$ sufficiently large, and if $\Delta(H^{(k_1)}, \cl F^{(k_1)}) \leq 0$ then $\Delta(H^{(k_i)}, \cl F^{(k_i)}) \leq 0$ for all $k_i.$ Now, as each $\cl F$ is weakly non-degenerate, we obtain by the support property of local Floer homology that in both cases $HF_{n}^{\loc}(H^{(k_i)},\cl F^{(k_i)}) = 0.$ Moreover, the following stronger statement is true: for all $k_i$ sufficiently large, $H^{(k_i)}$ admits a $C^2$-small non-degenerate Hamiltonian perturbation $H_i$ without capped periodic orbits of Conley-Zehnder index $n.$ However, this is in contradiction to the existence of the PSS isomorphism. Specifically, in this case $HF_n(H_i) = 0$ by definition of Floer homology, and by the PSS isomorphism $HF_n(H_i) \cong QH_{2n}(M) \neq 0.$ Indeed $[M] \in QH_{2n}(M)$ is non-zero. \qed

The following result was first proven in \cite{S-PRQS} in the setting of a {\em pseudo-rotation} assuming that the quantum Steenrod square of the point cohomology class is undeformed, or in other words that $(M,\om)$ is not $\F_2$ Steenrod uniruled. We observe that the same statement holds for generalized pseudo-rotations, with essentially the same proof, and with a small modification following \cite{SSW}, for all primes $p.$ Here $\mu \in QH^{2n}(M,\Lambda_{\F_p})$ denotes the cohomology class Poincar\'{e} dual to the point.

%
\begin{thm}\label{thm: spec ineq}
	Let $\psi$ be a generalized $\bF_p$ {\pr} with sequence $k_j = p^{j-1}$ of a closed monotone symplectic manifold $(M,\om)$ that is not $\F_p$ Steenrod uniruled. Then \begin{equation}\label{eq: greater or equal} c(\mu,\til{\psi}^p) \geq p \cdot c(\mu, \til{\psi})\end{equation} for each $\til{\psi} \in \til{\Ham}(M,\om)$ covering $\psi.$
\end{thm}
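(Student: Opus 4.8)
The inequality \eqref{eq: greater or equal} is a spectral refinement of a Steenrod-power relation, so the plan is to run the quantum Steenrod power machinery in the filtered setting and extract an action estimate, then feed in the homological minimality of the pseudo-rotation to upgrade a non-strict inequality into the claimed one. Concretely, I would first recall that because $(M,\om)$ is not $\F_p$ Steenrod uniruled, $QSt_p(\mu) = u^{(p-1)n}\mu$, i.e.\ the quantum Steenrod operation on $\mu$ is undeformed. The Floer-theoretic incarnation of $QSt_p$ is the equivariant pair-of-pants product, which for a Hamiltonian $H$ generating $\psi$ produces a map relating $HF^*(H^{(p)})$ (equivariantly, i.e.\ over $\F_p[[u]]\langle\theta\rangle$) to a $p$-fold construction on $HF^*(H)$. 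The key filtration property — proved in \cite{S-HZ} and \cite{SSW} in the isolated-fixed-point case and adapted here via Lemma \ref{crossing_energy} and the branched-cover crossing-energy estimate — is that this map shifts the action filtration by a factor of $p$: a class carried in filtration $\leq a$ for $H$ is sent to a class carried in filtration $\leq p\cdot a + (\text{curvature term})$ for $H^{(p)}$, and the undeformedness of $QSt_p(\mu)$ forces the curvature term to vanish on the relevant component. This should give $c(St_p(\mu), \til\psi^p) \leq p\cdot c(\mu,\til\psi)$, where $St_p(\mu)$ is (the reformulation of) the classical Steenrod power, together with the fact that the $u$-leading term of $QSt_p(\mu)$ is $u^{(p-1)n}\mu$ itself.

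**From the classical Steenrod term back to $\mu$.** The next step is to identify $c(St_p(\mu),\til\psi^p)$, or rather the relevant $u$-coefficient thereof, with $c(\mu,\til\psi^p)$. Here I would use that $St_p(\mu) = u^{(p-1)n}\mu$ as the leading (and, by undeformedness, only) term, and that multiplication by $u$ corresponds on the Floer side to a grading shift which does not move the action filtration (equivariant parameters $u,\theta$ carry zero action); so the spectral invariant of the $u^{(p-1)n}\mu$-component of the equivariant class computing $QSt_p(\mu)$ equals $c(\mu,\til\psi^p)$. Combined with the previous paragraph this yields
\[ c(\mu,\til\psi^p) \leq p\cdot c(\mu,\til\psi). \]
But the statement asks for the \emph{reverse} inequality, so the real content is to show this is forced to be an equality — i.e.\ that the opposite inequality $c(\mu,\til\psi^p)\geq p\cdot c(\mu,\til\psi)$ holds, and hence \eqref{eq: greater or equal}. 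This is where the pseudo-rotation hypothesis enters: since $\psi$ is a generalized $\F_p$ pseudo-rotation, its barcode has no finite bars, $\Spec^{ess} = \Spec^{vis}$, and the filtered Floer homology is ``as small as possible''; under iteration, by Theorem \ref{thm: torsion is PR}(iii) (or its pseudo-rotation analogue), $\Spec^{ess}(H^{(p)};\bK) = p\cdot\Spec^{ess}(H;\bK) + \rho\mathbb{Z}$. Thus $c(\mu,\til\psi^p)$, being an element of $\Spec^{ess}(H^{(p)})$, is of the form $p\cdot c' + k\rho$ for some $c'\in\Spec^{ess}(H)$ and $k\in\mathbb{Z}$; an action/index bookkeeping argument — using that $\mu$ is the top class so $c(\mu,-)$ is the maximal spectral invariant, and that the mean-index of the carrier lies in $(0,2n)$ by \eqref{eq: Delta pos} — pins down $c' = c(\mu,\til\psi)$ and $k\geq 0$, giving $c(\mu,\til\psi^p)\geq p\cdot c(\mu,\til\psi)$.

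**The main obstacle.** I expect the crux to be the filtered/equivariant Steenrod estimate: making precise that the equivariant pair-of-pants operation, in the generalized-fixed-point Floer-Morse-Bott setting, respects the action filtration with the exact factor-of-$p$ scaling and that undeformedness of $QSt_p(\mu)$ translates into vanishing of the relevant quantum (Novikov) correction terms on the filtered chain level. In the isolated-orbit case this is \cite{S-PRQS} for $p=2$ and \cite{SSW} for odd $p$; here one must check that the crossing-energy lemma (Lemma \ref{crossing_energy}) and the branched-cover analogue from \cite{SZhao-pants} suffice to port those arguments verbatim to generalized fixed points, which the excerpt asserts but which requires care with the parametrized moduli spaces over $B(\mathbb{Z}/p\mathbb{Z})$. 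A secondary, more bookkeeping-type difficulty is the precise index/action argument identifying the carrier of $c(\mu,\til\psi^p)$ with the $p$-th iterate of the carrier of $c(\mu,\til\psi)$ rather than a recapping, but this should follow the pattern of the proof of Theorem \ref{thm: generalized perfect CY negmon} using \eqref{eq: Delta pos} and the triangle inequality.
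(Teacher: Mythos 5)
The decisive problem is in your second paragraph, where the actual content of \eqref{eq: greater or equal} is supposed to be established. That inequality is precisely where the hypothesis that $(M,\om)$ is not $\F_p$ Steenrod uniruled must do its work, yet your derivation of the ``reverse'' inequality uses only the pseudo-rotation structure: the identity $\Spec^{ess}(H^{(p)};\bK)=p\cdot\Spec^{ess}(H;\bK)+\rho\cdot\Z$, the carrier mean-index bound \eqref{eq: Delta pos}, and an unproven ``bookkeeping'' claim that pins down $c'=c(\mu,\til{\psi})$ and $k\geq 0$. If such an argument existed, then \eqref{eq: greater or equal} would hold for every generalized $\F_p$ pseudo-rotation on every closed monotone manifold, with no uniruledness hypothesis at all; feeding that into the proof of Theorem \ref{thm: Morse-Bott PR uniruled} (which uses only \eqref{eq: greater or equal}, \eqref{eq: Delta pos} and monotonicity) would show that closed monotone symplectic manifolds admit no weakly non-degenerate generalized $\F_p$ pseudo-rotations whatsoever --- contradicted by irrational rotations of $S^2$ (or of $\C P^n$), which are non-degenerate pseudo-rotations over any $\F_p$. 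So the bookkeeping step cannot be completed. Concretely, knowing that $c(\mu,\til{\psi}^p)$ lies in $p\cdot\Spec^{ess}(H)+\rho\cdot\Z$ and that its carrier has mean index in $(0,2n)$ only bounds the recapping; it determines neither the cluster representative $c'$ nor the sign of $k$. Moreover the assertion that $c(\mu,-)$ is the \emph{maximal} spectral invariant because $\mu$ is the top class is not correct under the paper's conventions: by Proposition \ref{prop:cluster_struct_I} it is $c([M],\cdot)$ that dominates all valuation-zero classes, the point class sitting at the bottom of the cluster.

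By contrast, the paper does not argue this way: it observes that Theorem \ref{thm: spec ineq} is the result of \cite{S-PRQS} (for $p=2$), extended to odd primes following \cite{SSW}, and that the same proof applies verbatim to \emph{generalized} pseudo-rotations once the crossing-energy Lemma \ref{crossing_energy} and its branched-cover analogue \cite[Proposition 9]{SZhao-pants} are available. In that argument the $\Z/p$-equivariant pair-of-pants (Smith-type) comparison, the undeformedness of $QSt_p(\mu)$ --- which identifies the relevant equivariant class with $u^{(p-1)n}\mu$ with no quantum correction terms --- and the absence of finite bars for a pseudo-rotation are combined to yield the lower bound \eqref{eq: greater or equal} \emph{directly}; the Steenrod hypothesis is not used merely to obtain an opposite-direction estimate that is then reversed by spectrum bookkeeping. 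Your first paragraph is in the right spirit about porting the equivariant machinery to the Floer--Morse--Bott setting via crossing energy, but the logical architecture must place the uniruledness hypothesis inside the derivation of the inequality itself, as in the cited proofs.
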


We proceed to the proof of Theorem \ref{thm: Morse-Bott PR uniruled}.

\subsection{Proof of Theorem \ref{thm: Morse-Bott PR uniruled}}
Choose $H \in \cl H,$ such that the path $\{\phi^t_H \}_{t \in [0,1]}$ represents the class $\til{\phi}$ lifting $\phi.$ By the pigeonhole principle, there exists an isolated fixed point set $\cl F \subset \fix(\phi),$ and an increasing sequence $k_i$ such that $c(\mu, H^{(r_i)})$ for $r_i =  p^{k_i}$ is carried by a capping $\ol{\mf{G}}_i$ of the isolated set of $1$-periodic orbits of the $r_i$-iterated Hamiltonian $H^{(r_i)}$ corresponding to ${\cl{F}}^{(r_i)}.$ By taking a power of $\phi,$ we can assume that $r_1 = 1,$ and set $\ol{\mf{G}}= \ol{\mf{G}}_1.$ Write $\ol{\mf{G}}_i$ as a recapped iteration of $\ol{\mf{G}},$ that is \begin{equation}\label{eq: recap} \ol{\mf{G}}_i = \ol{\mf{G}}^{(r_i)} \# A_i.\end{equation} We claim that for $r_i$ large, $\om(A_i) \leq 0,$ and $c_1(A_i) > 0$ contradicting  monotonicity. Indeed, write $\cA_i$ for the action functional of $H^{(r_i)},$ and $\cA:=\cA_1.$ Then by \eqref{eq: recap} and Theorem \ref{thm: spec ineq}, \[r_i \cA(\ol{\mf{G}}) - \om(A_i) = \cA_i(\ol{\mf{G}}_i) = c(\mu, H^{(r_i)}) \geq r_i c(\mu, H) = r_i \cA(\ol{\mf{G}}).\] Hence \[\om(A_i) \leq 0.\] However, as $\ol{\mf{G}}_i$ carries $c(\mu, H^{(r_i)}),$ by \eqref{eq: Delta pos} we have $\Delta(H^{(r_i)},\ol{\mf{G}}_i) \in (0,2n)$ and also $\Delta(H,\ol{\mf{G}}) \in (0,2n).$ Hence $r_i \Delta(H,\ol{\mf{G}}) > 2n$ for $r_i$ large enough, and \[2n > \Delta(H^{(r_i)}, \ol{\mf{G}}_i) = r_i \Delta(H, \ol{\mf{G}}) - 2c_1(A_i).\] Therefore \[c_1(A_i) > 0.\] 

\subsection{Proof of Theorem \ref{thm: torsion is PR}}
	Suppose that $\phi \in \Ham(M,\om) \setminus \{ \id \}$ is of prime order $q \geq 2.$ Let $p \geq 2$ be a prime different from $q.$ In particular $\phi^{j\cdot p^k} \neq \id$ for all $k \in \Z,$ $1 \leq j \leq q-1.$ 
	
	
	
	Denote \[B(\phi,\F_p) = \max_{1 \leq j\leq q-1} {\beta_{\mrm{tot}}(\phi^j,\F_p)}.\] By Theorem \ref{thm: Smith} we obtain for $1 \leq j \leq q-1$ that \[B(\phi,\F_p) \geq \beta_{\mrm {tot}}(\phi^{j \cdot p^k},\F_p) \geq p^{k}\beta_{\mrm {tot}}(\phi^j,\F_p).\] Choosing a sufficiently large positive $k,$ this implies that for all $1 \leq j \leq q-1,$ \[ \beta_{\mrm{tot}}(\phi^j,\F_p) = 0,\] whence by Proposition \ref{prop: barcodes} all such $\phi^j$ are generalized $\F_p$ pseudo-rotations. They are weakly non-degenerate by Theorem \ref{thm: torsion is perfect}. In other words, the equality \[\spec^{vis}(H; \F_p) = \spec^{ess}(H; \F_p)\] follows directly from the fact that $\beta_{\mrm{tot}}(\phi, \F_p) = 0.$ This finishes the proof of part \ref{G: case 1}.
	
	
	Let us prove that $\spec^{vis}(H^{(k)}; \Q) = k\cdot \spec^{vis}(H; \Q) + \rho\cdot \Z$ for all $k \in \Z$ coprime with $q.$ By the universal coefficient formula in local Floer homology, it is sufficient to prove the identity $\spec^{vis}(H^{(k)}; \F_p) = k\cdot \spec^{vis}(H; \F_p) + \rho\cdot \Z$ for coefficients in $\F_p$ for an infinite sequence of primes $p.$ Consider the primes $p$ for which $p = a\; (\mrm{mod}\;q)$ where $a \in (\F_q)^{\ast}$ is a cyclic generator of the multiplicative group $(\F_q)^{\ast} = GL(1,\F_q)$ of $\F_q.$ In this case the set $\{\phi^{p^{j}}\;|\; j \in \Z_{\geq 0}\}$ coincides with $\{\phi^k\;|\; 1 \leq k \leq q-1\} = \{\phi^k\;|\;  k \neq 0 \; (\mrm{mod}\;q)\}.$ 
	
	Let $\ol{\mf{F}}$ be a capped generalized periodic orbit of $H.$ It is enough to prove that \[\dim_{\F_p} HF^{\loc}(H^{(p^j)},\ol{\mf{F}}^{(p^j)}) = \dim_{\F_p} HF^{\loc}(H,\ol{\mf{F}})\] for all $j \in \Z_{\geq 0}.$ Indeed, as explained above, each capped generalized fixed point of $\phi^{k}$ is a recapping of a $p^j$-iterated capped generalized fixed point of $\phi.$ 
	
	Now we know by the Smith inequality in generalized local Floer homology, Proposition \ref{thm: Smith loc}, that $\dim_{\F_p} HF^{\loc}(H^{(p^j)},\ol{\mf{F}}^{(p^j)})$ is an increasing function of $j.$ However, by the finite order condition it takes only a finite number of values. Therefore it must be identically constant. This finishes the proof of part \ref{G: case 2}.
	
	Now we prove part \ref{G: case 3} relying on Proposition \ref{prop: Pozniak}. First let $p=q.$ Then for $\psi = \phi^k,$ with $k$ coprime to $p,$ \[N(\psi,\F_p) = \sum \dim_{\F_p} HF^{\loc}(\psi,\cl{F}) = \sum \dim_{\F_p} H(\cl{F};\F_p),\] the sum running over all contractible generalized fixed points $\cl{F}$ of $\psi,$ since by Proposition \ref{prop: Pozniak}  \[HF^{\loc}(\psi,\cl{F}) \cong H(\cl{F};\F_p)\] for all generalized fixed points $\cl{F}.$ We remark that $H(\cl{F};\F_p) \neq 0.$ Now, by Proposition \ref{prop: barcodes}, we know that \[N(\psi,\F_p) \geq \dim_{\F_p} H(M;\F_p).\] On the other hand by the classical Smith inequality \cite{Smith-original,Floyd-original,Borel-Transformation} we have \[ \sum \dim_{\F_p} H(\cl{F};\F_p) \leq \dim_{\F_p} H(M;\F_p),\] the sum running over all the generalized fixed points of $\psi.$ This yields \[N(\psi,\F_p) = \dim_{\F_p} H(M;\F_p).\] This finishes the proof of the first statement of part \ref{G: case 3}.
	
	\begin{rmk}
	Note that, surprisingly enough, this argument also proves that all the generalized fixed points of $\psi$ are contractible. Indeed the upper bound holds for {\em all} the generalized fixed points of $\psi,$ and if $\psi$ had a non-contractible generalized fixed point, then by Proposition \ref{prop: Pozniak} it would contribute $\dim_{\F_p} HF^{\loc}(\psi,\cl{F}) = \dim_{\F_p} H(\cl{F};\F_p) > 0$ to the sum, making the equality impossible. Alternatively, one can argue by means of a suitable generalization of Theorem \ref{thm: Smith} with $p \neq q.$
	\end{rmk}

 To prove the second statement of part \ref{G: case 3}, we first note that for $\mrm{char}(\bK) = p,$ \[\Spec^{ess}(H,\bK) = \Spec^{ess}(H,\F_p),\;\Spec^{vis}(H,\bK) = \Spec^{vis}(H,\F_p),\] by Proposition \ref{prop: barcodes}, and the equality \[\Spec^{ess}(H,\F_p) = \Spec^{vis}(H,\F_p)\] follows by the first statement of part \ref{G: case 3} and Proposition \ref{prop: barcodes}. For the last part, we note that by Proposition \ref{prop: Pozniak}, $\Spec^{vis}(H,\bK) = \Spec(H)$ because \[\dim HF^{\loc}(H,\ol{\mf{F}}) = \dim HF^{\loc}(\phi, \cl{F}) = \dim H(\cl{F}; \bK) > 0\] for all capped contractible generalized $1$-periodic orbits $\ol{\mf{F}}$ of $H.$ Now for $k$ coprime to $q,$ $\Spec(H^{(k)}) = \{ \cl{A}_{H^{(k)}}(\ol{\mf{F}}^{(k)} \# A) \}$ where the set runs over all $A \in \Gamma,$ and $\ol{\mf{F}}$ runs over all capped contractible generalized $1$-periodic orbits $\ol{\mf{F}}$ of $H.$ Indeed, all the contractible generalized fixed points of $\phi^k$ are of the form $\cl{F}^{(k)}$ for $\cl{F}$ a contractible generalized fixed point of $\phi,$ and the identity quickly follows. Now using the homogeneity and the recapping properties of the action functional we obtain  \[ \Spec(H^{(k)}) = k \cdot \Spec(H) + \rho \cdot \Z.\] Combined with the identity $\Spec^{ess}(H^{(k)}; \bK) = \Spec^{vis}(H^{(k)}; \bK) = \Spec(H^{(k)}),$ $\Spec^{ess}(H; \bK) = \Spec^{vis}(H; \bK) = \Spec(H)$ this finishes the proof. 
	
%

\section*{Acknowledgements}
We thank Leonid Polterovich for inspiring discussions. At the University of Montr\'{e}al E.S. was supported by an NSERC Discovery Grant, by the Fonds de recherche du Qu\'{e}bec - Nature et technologies, and by the Fondation Courtois, and M.S.A. was supported by the Fondation Courtois.

\bibliographystyle{abbrv}
\bibliography{bibliographyHTU}

\end{document}